\begin{document}


\title[Two-scale methods for integrodifferential operators]{Monotone two-scale 
methods for a class of integrodifferential operators and 
applications}

\author[J.P.~Borthagaray]{Juan Pablo Borthagaray}
\address[J.P.~Borthagaray]{Instituto de Matematica y Estad\'istica ``Rafael Laguardia'', Facultad de Ingenier\'ia, Universidad de la Rep\'ublica, Montevideo, Uruguay.}
\email{jpborthagaray@fing.edu.uy}
\urladdr{https://sites.google.com/view/jpborthagaray}
\thanks{JPB has been supported in part by Fondo Clemente Estable grant 2022-172393.}

\author[R.H.~Nochetto]{Ricardo H. Nochetto}
\address[R.H.~Nochetto]{Department of Mathematics and Institute for Physical Science and
Technology, University of Maryland, College Park, MD 20742, USA.}
\email{rhn@math.umd.edu}
\urladdr{https://www.math.umd.edu/~rhn/}
\thanks{RHN has been supported in part by NSF grant DMS-1908267.}

\author[A.J.~Salgado]{Abner J. Salgado}
\address[A.J.~Salgado]{Department of Mathematics, University of Tennessee, Knoxville, TN 37996, USA.}
\email{asalgad1@utk.edu}
\urladdr{https://math.utk.edu/people/abner-salgado/}
\thanks{AJS has been supported in part by NSF grant DMS-2111228.}

\author[C.~Torres]{C\'eline Torres}
\address[C.~Torres]{Department of Mathematics, University of Maryland, College Park, MD 20742, USA.}
\email{cetorres@umd.edu}
\thanks{CT has been supported in part by NSF grant DMS-1908267 and the Swiss 
National Science Foundation grant number P500PT\_203082}

\thanks{Part of this work was completed while RHN and AJS were in residence at the Institute for Computational and Experimental Research in Mathematics (ICERM) in Providence, RI, during the Numerical PDEs: Analysis, Algorithms, and Data Challenges semester program. ICERM is supported by NSF grant DMS-1929284.}

\subjclass[2020]{35S15,                    
35R09,                    
45K05,                    
65R20,                    
65N06,                    
65N12,                    
65N15,                    
35R35,                    
65K15,                    
49L25,                    
}

\keywords{Integrodifferential operators; Monotonicity; Obstacle problems; Fully nonlinear nonlocal equations; Rate of convergence.}


\begin{abstract}
We develop a monotone, two-scale discretization for a class of integrodifferential operators of order $2s$, $s \in (0,1)$. We apply it to develop numerical schemes, and derive pointwise convergence rates, for linear and obstacle problems governed by such operators. As applications of the monotonicity, we provide error estimates for free boundaries and a convergent numerical scheme for a concave fully nonlinear, nonlocal, problem.
\end{abstract}

\maketitle

\section{Introduction}

In recent times, nonlocal models have gained a lot of popularity in the pure and applied sciences. The reason behind this boom is manifold. From the point of view of applications, it is claimed that they are able to encode a wider range of phenomena when compared to their local counterparts. In this regard, for instance, one can refer to the creation of peridynamics \cite{MR1727557,MR2348150}, nonlocal diffusion reaction equations \cite{Yamamoto2012}, fractional Cahn-Hilliard models \cite{Akagi2016, Ainsworth2017a}, fractional porous media equations \cite{Pablo2012}, the fractional Schr\"odinger equation \cite{Treeby2010}, fractional viscoelasticity \cite{Carmichael2015}, fractional Monge-Amp\`ere \cite{Caffarelli2015} and many more. The interested reader can refer to the many existing overviews \cite{Bonito2018,MR4043885,MR4189291} for further references and insight.

In our opinion, nonlocal models started to gain the attention of the mathematical community after the seminal work \cite{Caffarelli2007}, where the authors showed that the so-called fractional Laplacian (in the whole space) can be realized as a degenerate elliptic operator in one more dimension. With this, many of the techniques that were used to analyze local problems could now be applied to nonlocal ones. Later, purely nonlocal techniques for many problems were also introduced; see, for instance, \cite{Caffarelli2017,MR3503212}.

Regarding the numerical treatment of nonlocal problems, some early attempts could be found in \cite{Huang2014,MR2252038,MR2300467,MR2212226,MR2300288}. However, references \cite{Nochetto2015,MR3356020,Bonito2019,Acosta2017a} deserve special attention. In \cite{Nochetto2015} the extension technique of \cite{Caffarelli2007} was exploited to develop a numerical method, and its analysis, for the so-called \emph{spectral} fractional Laplacian. References \cite{MR3356020,Bonito2019} for the spectral and integral fractional Laplacian, respectively, develop a nonconforming approximation on the basis of an integral representation, a quadrature formula, and spatial discretization. Finally, closer to our discussion here is \cite{Acosta2017a} where the \emph{integral fractional Laplacian} is considered. On the basis of weighted H\"older regularity results developed in \cite{RosOton2014}, the authors of this work provide weighted Sobolev regularity, and construct a direct discretization over graded meshes that, in two dimensions, is optimal with respect to degrees of freedom.

The error estimates of \cite{Acosta2017a, Borthagaray2021} are in the energy and $L^2(\Omega)$--norms. Pointwise error estimates for the integral fractional Laplacian were obtained in \cite{Han2021}, where a monotone two-scale method for this operator was developed. The error estimates in \cite{Han2021} hinged on weighted H\"older regularity and the construction of suitable barriers.

Two-scale methods, such as those developed in \cite{Nochetto2018, 
Li2018,Nochetto2019, Nochetto2019b} and \cite{Han2021,Han2023} naturally inherit a 
discrete maximum principle from the continuous operator. This is in general 
not true for finite element approximations. For instance, the finite element 
approximation of the classical Laplacian possesses a discrete maximum principle 
only under certain geometric constraints on the mesh such as weak acuteness \cite[Section 
III.20]{MR1115237}. We stress that for nonlocal operators, the lack of 
monotonicity for the finite element approximation is not expected to be fixed 
by only a geometric constraint as for its local counterpart, especially when $s \approx 0$, where
the stiffness matrix approaches a standard mass matrix.

This brings us then to the main goals of the present work. We will consider an integrodifferential operator of order $2s$, $s \in (0,1)$, of the form
\begin{equation}
\label{eq:TheOperator}
  \calL_\eta[w](x) = \vp \int_{\R^d} \frac{ w(x) - w(y)}{|x-y|^{d+2s}} \ani\left(\frac{x-y}{|x-y|}\right) \diff{y},
\end{equation}
where the so-called \emph{kernel} $\eta :\polS^{d-1} \to \Real$ is assumed to verify several properties that will be specified below. Operators of this form have been extensively studied in probability and finance, as they represent the generator of \(2s\)--stable L\'evy processes, see \eg \cite{Cont2006,Bertoin1996}. The simplest, and most important, example is the aforementioned integral fractional Laplacian $\Laps$, for which $\eta(\theta)$ is constant
\begin{equation}
\label{eq:DefOfLaps}
  \eta(\theta) \equiv C(d,s) = \frac{4^s s\Gamma(s+d/2) }{ \pi^{d/2} \Gamma(1-s) } > 0.
\end{equation}
In \cite{Han2021}, the authors propose a monotone discretization for the fractional Laplacian based on splitting the integral in the definition of the operator into a singular and a tail parts; the former is approximated by a scaled finite-difference Laplacian. This, in essence, amounts to truncating the kernel in a neighborhood of the origin.
Inspired by that work, for a broader class of operators with kernel $\eta$ satisfying $0<\lambda\le \eta(\theta)\le \Lambda$ (\cf Definition \ref{def:ClassCoeff}), we develop a monotone, two-scale method for the discretization of the operator \eqref{eq:TheOperator} and show its consistency under weighted H\"older regularity assumptions. Instead of truncation, we propose to \emph{regularize} the kernel in \eqref{eq:TheOperator} near the origin. From an analysis perspective, this means that we use a zero-order operator for the approximation of the singular part of the integral. Regularization is robust with respect to both the presence of $\eta$ and the limit $s\uparrow1$. In Appendix~\ref{sec:Regularize} we provide further insight on the effect of truncation, which boils down to replacing the singular integral part by a scaled second order differential operator, which turns out to be {\it anisotropic} when $\eta$ is variable and may thus require wide stencils for finite difference approximations to be monotone.
  
We then consider three applications of our scheme of increasing difficulty. First, we obtain pointwise rates of convergence for a linear problem where the operator is of the form \eqref{eq:TheOperator}. Next, we consider an obstacle problem for \eqref{eq:TheOperator} and obtain pointwise rates of convergence. The latter, in turn, allow us to derive error estimates in Hausdorff distance for free boundaries. This endeavor hinges on nondegeneracy properties which we further proved.  Finally, we develop a numerical scheme for a class of concave fully nonlinear integrodifferential operators of order $2s$. Rates of convergence for problems of this type, however, are at this stage beyond our reach.
We point out that our rates of convergence rely on realistic H\"older regularity assumptions: $C^s$ regularity up to the boundary and interior $C^\beta$ regularity, with $\beta \in (2s,4)$. In the linear case, the interior regularity is solely dictated by the regularity of the forcing function, whereas the obstacle problem exhibits a reduced interior regularity of order $\beta \le 1+s$.

Our paper is organized as follows.
We introduce notation and a suitable functional framework in Section~\ref{sec:Prelim}. We also present the class of integrodifferential operators of interest to us along with their most elementary properties. We explain the two-scale discretization of our operators in Section~\ref{sec:TwoScales}. We describe the action of the two scales, namely regularization and discretization, and study the consistency of approximations. The first application of our two-scale discretization is the content of Section~\ref{sec:Linear}, where we approximate a linear problem and derive pointwise rates of convergence. We next study an obstacle problem in Section~\ref{sec:Obstacle}. We first prove pointwise convergence rates in Section \ref{sub:ObstacleDiscrete}. In Section~\ref{sub:FreeBoundaryObstacle} we establish nondegeneracy properties, and in Section \ref{sub:GammaEstimates} we obtain error estimates for free boundaries in Hausdorff distance. These estimates account for the presence of both regular and singular points. We insist once again that all  our estimates rely on realistic regularity assumptions. As a final application of our scheme, Section~\ref{sec:HJB} studies a concave fully nonlinear integrodifferential equation. We propose a convergent scheme, albeit without explicit rates of convergence. We conclude with Appendix~\ref{sec:Regularize} that sheds light on the nature of the differential operator required by consistency with quadratics to replace the tip of the singular kernel $\frac{1}{|z|^{d+2s}}\eta\big(\frac{z}{|z|}\big)$ for a general function $\eta$.

\section{Notation and preliminaries}
\label{sec:Prelim}

Let us introduce some notation and terminology that will be used throughout our discussion. For $A,B \in \R$ the relation $A \lesssim B$ means that, for a nonessential constant $c$, we have $A \leq c B$. The value of this constant may change in every occurrence. $A \gtrsim B$ means $B \lesssim A$. If $A \lesssim B \lesssim A$ we abbreviate this by saying $A \approx B$. The Landau symbols, big-O and little-o, respectively, are $\calO$ and $\frako$.

For $r >0$ and $x \in \Real^d$ we denote by $B_r(x)$ the (open) Euclidean ball of radius $r$ centered at $x$. We set $B_r= B_r(0)$. The unit sphere in $\R^d$ is $\polS^{d-1} = \partial B_1$.

Throughout our discussion $\Omega \subset \R^d$, with $d \in \polN$, is a bounded Lipschitz domain which we assume satisfies an exterior ball condition. We denote its boundary by $\partial\Omega$ and its complement by $\Omega^c = \Real^d \setminus \overline\Omega$.

For $x \in \Omega$ we define \(\delta(x)=\dist(x,\partial\Omega)\) and if $x,y \in \Omega$, then we set \(\delta(x,y) =\min\{\delta(x),\delta(y)\}\). 

\subsection{Function spaces and their norms}
\label{sub:Functions}

We will adhere to standard notation for the Lebesgue spaces, and their norms, when they are defined either over the whole space or some domain. Since we are concerned with pointwise estimates, we must work with functions that are at least continuous. The space of functions $w : \bar\Omega \to \Real$ that are continuous is denoted by $C(\bar\Omega)$. We recall that this space endowed with the norm
\[
  \| w \|_{C(\bar\Omega)} = \sup_{x \in \bar\Omega} |w(x)|,
\]
is a Banach space. We also need spaces of continuously differentiable functions. For $k \in \polN$ we set
\[
  C^k(\bar\Omega)= \left\{ w : \bar\Omega \to \Real \ \middle| \ D^\beta w \in C(\bar\Omega), |\beta| \leq k \right\}.
\]
We also set $C^0(\bar\Omega) = C(\bar\Omega)$. The norm on these spaces is
\[
  \| w \|_{C^k(\bar\Omega)} = \sup_{|\beta|\leq k } \| D^\beta w\|_{C(\bar\Omega)}.
\]
To provide a more refined characterization of smoothness, for $k \in \polN_0$, and $\alpha \in (0,1]$ we define the H\"older spaces via
\[
  C^{k,\alpha}(\bar\Omega) = \left\{ w \in C^k(\bar\Omega) \ \middle| \ [D^\beta w]_{C^{0,\alpha}(\bar\Omega)} < \infty, |\beta| = k \right\},
\]
where
\[
  [w]_{C^{0,\alpha}(\bar\Omega)} = \sup_{x,y \in \bar\Omega: x\neq y} \frac{ |w(x) - w(y)|}{|x-y|^\alpha }.
\]
The norm in the H\"older spaces is
\[
  \| w \|_{C^{k,\alpha}(\bar\Omega)} = \max\left\{ \| w \|_{C^k(\bar\Omega)}, \max_{|\beta|=k} [D^\beta w]_{C^{0,\alpha}(\bar\Omega)} \right\}.
\]
These spaces are complete.
We set $C^{k,0}(\bar\Omega) = C^k(\bar\Omega)$. In addition, whenever $k \in \polN_0$ and $\alpha \in (0,1]$ with $k+\alpha \notin \polN_0$, we may denote $C^{k+\alpha}(\bar\Omega) = C^{k,\alpha}(\bar\Omega)$. Finally, for $k \in \polN_0$ and $\alpha \in (0,1]$ we will say that $w \in C^{k,\alpha}(\Omega)$ if $w_{|\bar U} \in C^{k,\alpha}(\bar U)$ for all $U \Subset \Omega$. A further refinement of these spaces will be detailed when needed.

We will also deal with fractional Sobolev spaces. For $r \in (0,1)$ we set
\[
  H^r(\Real^d) = \left\{ w\in L^2(\Real^d) \ \middle| \ \| w \|_{H^r(\Real^d)}<\infty \right\},
\]
with
\begin{align*}
  \|w \|_{H^r(\Real^d)}^2 &= \| w \|_{L^2(\Real^d)}^2 + [w]_{H^r(\R^d)}^2, \\
  [w]_{H^r(\R^d)}^2 &=
  \frac{C(d,r)}2 \iint_{\Real^d \times \Real^d} \frac{| w(x)-w(y)|^2}{|x-y|^{d+2s}} \diff{x} \diff{y},
\end{align*}
for which it is a Hilbert space.
We also set, again for $r \in (0,1)$,
\[
  \tildeH^r(\Omega) = \left\{ w \in \Ldeux \ \middle| \ \tildew \in H^r(\R^d) \right\},
\]
where by $\tildew$ we denote the extension by zero onto $\Omega^c$. Owing to the fractional Poincar\'e inequality,
\[
  \| w \|_\Ldeux \lesssim [\tildew]_{H^r(\R^d)}, \quad \forall w \in \tildeH^r(\Omega),
\]
the seminorm $[\cdot]_{H^r(\R^d)}$ is actually an equivalent norm in $\tildeH^r(\Omega)$ which makes it Hilbert. The dual of $\tildeH^r(\Omega)$ is denoted by $H^{-r}(\Omega)$, and the duality pairing will be $\langle \cdot, \cdot \rangle$.
In what follows, if confusion does not arise, we shall suppress the explicit mention of zero extensions.

\subsection{The integrodifferential operator}
\label{sub:AssumptionsOnKernel}

We shall consider integrodifferential operators of the form \eqref{eq:TheOperator}. Regarding the kernel, we encode its assumptions in the following definition.

\begin{defn}[class $\calC(\lambda,\Lambda)$]
\label{def:ClassCoeff}
Let $\lambda, \Lambda \in (0,\infty)$ with $\lambda \leq \Lambda$. We will say that the kernel $\ani : \polS^{d-1} \to \Real$ belongs to the class $\calC(\lambda,\Lambda)$ if it is:
\begin{enumerate}[1.]
  \item Symmetric, \ie $\eta(\theta) = \eta(-\theta)$ for all $\theta \in \polS^{d-1}$.

  \item Elliptic, \ie we have
  \[
    0 < \lambda \leq \eta(\theta) \leq \Lambda, \quad \mae \quad \theta \in \polS^{d-1}.
  \]
\end{enumerate}
\end{defn}

Owing to these assumptions, if $\ani \in \calC(\lambda, \Lambda)$, a fractional integration by parts shows that
\[
  [ w ]_{H^s(\Real^d)}^2 \approx \langle \calL_\eta[w], w \rangle, \quad \forall w \in \tildeH^s(\Omega),
\]
with equivalence constants that depend on $\ani$ only through the constants $\lambda$ and $\Lambda$. In other words, for every $\eta \in \calC(\lambda,\Lambda)$ the expression
\[
  \| w \|_{\eta,s}^2 = \iint_{\R^d \times \R^d} \frac{|w(x)- w(y)|^2}{|x-y|^{d+2s}} \eta \left(\frac{x-y}{|x-y|}\right) \diff{y} \diff{x},
\]
is an equivalent Hilbertian norm on $\tildeH^s(\Omega)$.

Owing to the positivity of the kernel, the operator \eqref{eq:TheOperator} satisfies a comparison principle.

\begin{prop}[comparison principle]
\label{prop:Compare}
Let $K \colon \R^d \to \R$ be a positive function. Define the operator $\calL_K$ via
\[
\calL_K [w](x) = \vp \int_{\R^d} (w(x) - w(y)) \, K(x-y) \diff{y} .
\]
Let $w \colon \R^d \to \R$ be such that, in the weak sense, $\calL_K[w] \geq 0$ almost everywhere in $\Omega$ and $w \geq 0$ in $\Omega^c$. Then, $w \geq 0$ in $\Omega$.
\end{prop}
\begin{proof}
  See \cite[Proposition 4.1]{RosOton2016a}.
\end{proof}

As a final piece of preliminary notation we must introduce a set of parameters, relations between them, and a space of functions with a prescribed behavior away from the boundary. We let $\beta \in \Real$, and assume it satisfies
\begin{equation}
\label{eq:Parameters}
 \beta > 2s, \qquad \beta, \beta-2s \notin \polN_0. 
\end{equation}
Given such $\beta$ we define
\begin{equation}
\label{eq:ClassForSol}
  \calS^\beta(\Omega) = \left\{ w \in C^\beta(\Omega)\cap C^{0,s}(\bar\Omega) \ \middle| \ 
  \| w \|_{C^{\beta}(\left\{x\in \Omega \ \middle| \ \delta(x)\geq \rho \right\})} \lesssim \rho^{s-\beta}
  \right\}.
\end{equation}
The usefulness of this function class shall become clear once we perform the error analysis. Indeed, as we shall show (\cf Lemma~\ref{lem:hoelderboundary} below), solutions of problems involving the operator $\calL_\eta$ typically belong to such class.

\section{Two-scale discretization}
\label{sec:TwoScales}

We now begin the discretization of our integrodifferential operator \eqref{eq:TheOperator}. Besides consistency, a fundamental necessity, we wish to preserve its comparison property detailed in Proposition~\ref{prop:Compare}.

These two requirements stand at odds of each other. For instance, when dealing with (local) elliptic second order differential operators and their finite element discretization, very stringent mesh requirements must be imposed to retain a comparison principle, see \cite[Section III.20]{MR1115237} and \cite[Section 3.5]{Neilan2017}. If, on the contrary, we discretize via finite differences, it is known that wide stencils must be employed, see \cite[Section 3.2]{Neilan2017}. To fulfill these two conditions then, we will employ a two-scale discretization.

Two-scale discretizations have become a popular choice to develop schemes that preserve the comparison principle for (local) elliptic second order differential operators. As an example, the reader is referred to \cite{Nochetto2018, 
Li2018,Nochetto2019, Nochetto2019b}. Regarding the two-scale discretization of nonlocal problems we mention \cite{Han2021,Han2023}, where the authors proposed a two-scale discretization for the fractional Laplacian.

\subsection{The regularization scale}
\label{sub:Regularization}
The first step in the approximation of \eqref{eq:TheOperator} is regularization. The idea is that, for some $\vare>0$, we split the integral that defines the operator in two parts: $\Real^d \setminus B_\vare$ and $B_\vare$. The integral in the small ball $B_\vare$ is then regularized by introducing a non singular kernel with suitable approximation properties. 

We begin by observing that, since the kernel $\eta$ is symmetric, we have
\begin{equation}
\label{eq:etaISsymmetric}
  \calL_\eta[w](x) = \frac12 \int_{\Real^d}\left( 2w(x) - w(x-y) - w(x+y) \right) \eta\left( \frac{y}{|y|} \right) \calK(|y|) \diff{y},
\end{equation}
where $\calK(r) = \tfrac1{r^{d+2s}}$. Let now $\vare>0$. We define
\begin{equation}
\label{eq:TheOperatorvare}
  \calL_{\eta,\vare}[w](x) = \frac12 \int_{\Real^d} \left( 2w(x) - w(x-y) - w(x+y) \right) \eta\left( \frac{y}{|y|} \right) \calK_\vare(|y|) \diff{y},
\end{equation}
where the radial, nonsingular kernel $\calK_\vare$ is defined as
\begin{equation}
\label{eq:DefOfKvare}
  \calK_\vare(r) = \begin{dcases}
                 \frac1{r^{d+2s}}, & r \geq \vare, \\
                 \frac1{\vare^{d+2s}} + \gamma (r^2 - \vare^2) + \nu (r^3 -\vare^3), & r<\vare.
               \end{dcases}
\end{equation}
We point out that \eqref{eq:DefOfKvare} gives
\[
  \lim_{r \uparrow \vare} \calK_\vare(\vare) = \vare^{-d-2s}, \qquad \lim_{t \downarrow 0}  \frac{\calK_\vare(t) - \calK_\vare(0)}{t}= 0.
\]
The constants $\gamma$ and $\nu$ are chosen so that $\calK_\vare \in C^1([0,\infty))$ and
\[
  \calL_{\eta}[q] = \calL_{\eta,\vare}[q], \quad \forall q \in \polP_2.
\]
The smoothness requirement implies that
\[
  2\gamma \vare + 3 \nu \vare^2 = -(d+2s)\vare^{-d-2s-1}.
\]
On the other hand, we will have exactness provided that
\[
  \int_0^\vare \frac{r^2}{r^{d+2s}}r^{d-1}\diff{r} = \int_0^\vare r^2 \calK_\vare(r)r^{d-1} \diff{r}.
\]
In short, the parameters $\gamma$ and $\nu$ have the values
\begin{equation}
\label{eq:DefOfRegParameters}
  \begin{aligned}
    \gamma  &= -\frac{(4 + d) (d + 2 s) (3 + d + 2 s) }{ 4 (1 - s)}\varepsilon^{- d - 2 s - 2} < 0,\\
    \nu &= \frac{(5 + d) (d + 2 s) (2 + d + 2 s)}{ 6 (1 - s)} \varepsilon^{-d - 2 s - 3} > 0.
  \end{aligned}
\end{equation}

The following result explores the interior consistency of this regularization. By interior we mean that we consider points $x \in \Omega$, such that $B_\vare(x) \subset \Omega$, \ie all the points where the regularization of the kernel takes place are contained in $\Omega$.

\begin{thm}[interior consistency]
\label{thm:ConsistencyRegularization}
Let $\Omega$ be a bounded Lipschitz domain that satisfies the exterior ball 
condition. Assume that $\beta \in (2s, 4]$ and $\eta 
\in \calC(\lambda,\Lambda)$. Let $w \in \calS^\beta(\Omega)$, where this class is 
defined in \eqref{eq:ClassForSol}. Let $\alpha_0>1$, $\vare>0$, and $x \in \Omega$ be such 
that  $\delta(x) \geq \alpha_0 \vare$. Then, it holds that
\[
  \big| \calL_{\eta}[w](x) - \calL_{\eta,\vare}[w](x) \big| \lesssim \vare^{\beta - 2s} \delta(x)^{s-\beta}.
\]
The implicit constant depends on $d$, $s$, $\alpha_0$, $\beta$, $\lambda$, and $\Lambda$.
\end{thm}
\begin{proof}
We need to estimate
\begin{align*}
  \big| \calL_{\eta}[w](x) &- \calL_{\eta,\vare}[w](x) \big| \\
 & \hskip-0.3cm  =  \frac12 \left| \int_{B_\vare} \left( 2w(x) - w(x-y) - w(x+y) \right) \eta\left( \frac{y}{|y|} \right) \left( \frac1{|y|^{d+2s}} - \calK_\vare(|y|) \right) \diff{y}\right|  \\
& \hskip-0.3cm \lesssim \int_0^\vare \| w \|_{C^\beta(B_r(x))} r^\beta \left| \frac1{r^{d+2s}} - \calK_\vare(r) \right| r^{d-1} \diff{r} \\
 & \hskip-0.3cm\lesssim \int_0^\vare \left( \delta(x) - r\right)^{s-\beta} \left| \frac1{r^{d+2s}} - \calK_\vare(r) \right| r^{\beta+d-1} \diff{r},
\end{align*}
where, in the last step, we used the interior H\"older estimate in \eqref{eq:ClassForSol} satisfied by $w\in\mathcal{S}^\beta(\Omega)$.
Since $\delta(x) \geq \alpha_0 \vare$, we get
\begin{multline*}
  \int_0^\vare \left( \delta(x) - r\right)^{s-\beta} \left| \frac1{r^{d+2s}} - \calK_\vare(r) \right| r^{\beta+d-1} \diff{r}  \\ \lesssim \delta(x)^{s-\beta} \int_0^\vare r^{\beta+d-1} \left( \frac1{r^{d+2s}} + \calK_\vare(r) \right) \diff{r}
  \lesssim \vare^{\beta - 2s} \delta(x)^{s-\beta},
\end{multline*}
which is the desired estimate.
\end{proof}

As it can be seen from the proof of Theorem~\ref{thm:ConsistencyRegularization}, the interior consistency of our regularization depends on how close we are to the boundary. In particular, we need to have $B_\vare(x) \Subset\Omega$. For this reason, given $\vare>0$ we let $\bvare : \Omega \to (0, \infty)$ be sufficiently smooth and satisfy
\[
  \bvare(x) \leq \min\left\{ \frac{\delta(x)}2, \vare \right\}, \quad \forall x \in \Omega.
\]
With this function at hand, we then define
\[
  \calL_{\eta,\bvare}[w](x) = \calL_{\eta,\bvare(x)}[w](x).
\]
In order to properly leverage the boundary regularity of solutions, the choice of $\bvare$ is made more precise below; depending on the problem under consideration.

Owing to the positivity of $\calK_\vare$, the comparison principle from Proposition~\ref{prop:Compare} holds for $\calL_{\eta,\bvare}$. Thus, pointwise consistency estimates for this operator can be achieved by combining a comparison principle with a suitable barrier function. We thus construct a barrier.

\begin{lemma}[barrier]
\label{lem:barrier}
Let $\Omega$ be a bounded, Lipschitz domain. Define
\[
  b(x) = \chi_\Omega(x), \quad \forall x \in \Omega.
\]
Then,
\[
  \delta(x)^{-2s} \lesssim \calL_{\eta,\bvare}[b](x), \quad \forall x \in \Omega.
\]
\end{lemma}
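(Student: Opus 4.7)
The plan is to reduce the evaluation of $\calL_{\eta,\bvare}[b](x)$ to a tail integral of the singular kernel over $\Omega^c$, and then extract the $\delta(x)^{-2s}$ scaling from the exterior ball condition.

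First, I would localize. Because $\bvare(x)\le\delta(x)/2$, every $y$ with $|y|\le\bvare(x)$ satisfies $x\pm y\in B_{\delta(x)/2}(x)\subset\Omega$, so $b(x\pm y)=1=b(x)$ and the integrand of \eqref{eq:TheOperatorvare} vanishes on $B_{\bvare(x)}$. On the complement $\{|y|\ge\bvare(x)\}$ we have $\calK_{\bvare(x)}(|y|)=|y|^{-d-2s}$ by definition, and the regularized part of the kernel plays no role whatsoever.

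Next I would reformulate the integral. Using $2-b(x-y)-b(x+y)=\chi_{\Omega^c}(x-y)+\chi_{\Omega^c}(x+y)$, the symmetry $\eta(-\theta)=\eta(\theta)$, and the substitution $z=x+y$, the two halves of the symmetric difference coincide, leading to
\[
  \calL_{\eta,\bvare}[b](x) \;=\; \int_{\Omega^c}\eta\Bigl(\tfrac{z-x}{|z-x|}\Bigr)\,\frac{dz}{|z-x|^{d+2s}}\;\ge\;\lambda\int_{\Omega^c}\frac{dz}{|z-x|^{d+2s}},
\]
where the constraint $|z-x|\ge\bvare(x)$ is automatic because $|z-x|\ge\delta(x)>\bvare(x)$ for every $z\in\Omega^c$. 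The lower bound uses the ellipticity hypothesis $\eta\ge\lambda$ from Definition \ref{def:ClassCoeff}.

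It remains to show $\int_{\Omega^c}|z-x|^{-d-2s}\,dz\gtrsim\delta(x)^{-2s}$. Let $\rho>0$ denote the uniform exterior-ball radius of $\Omega$, pick $y_0\in\partial\Omega$ with $|x-y_0|=\delta(x)$, and let $\nu$ be the outer unit vector for which $B_\rho(y_0+\rho\nu)\subset\Omega^c$. Setting $r=\min\{\delta(x),\rho\}$, the sub-ball $B_r(y_0+r\nu)$ is still contained in $B_\rho(y_0+\rho\nu)\subset\Omega^c$, and by the triangle inequality every one of its points lies at distance at most $\delta(x)+2r\le 3\delta(x)$ from $x$. Therefore
\[
  \int_{\Omega^c}\frac{dz}{|z-x|^{d+2s}}\;\ge\;\frac{|B_r|}{(3\delta(x))^{d+2s}}\;\approx\;\frac{r^d}{\delta(x)^{d+2s}}.
\]
When $\delta(x)\le\rho$, $r=\delta(x)$ and the right-hand side is a fixed multiple of $\delta(x)^{-2s}$; when $\delta(x)>\rho$, $r=\rho$ and the right-hand side is bounded below by a constant depending on $\rho$ and $\mathrm{diam}(\Omega)$, which in turn dominates the bounded quantity $\delta(x)^{-2s}\le\rho^{-2s}$. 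Combining the two regimes yields the lemma.

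The main obstacle I anticipate is the geometric step: although the calculation is short, producing the correct $\delta(x)^{-2s}$ scaling requires locating a piece of $\Omega^c$ of volume $\sim\min(\delta(x),\rho)^d$ at distance $\sim\delta(x)$ from $x$, and this is exactly where the exterior ball condition on $\Omega$ is essential. Without it (e.g.\ for general Lipschitz domains with cusps) the argument fails and one cannot expect the barrier bound in this form.
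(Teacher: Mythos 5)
Your proof is correct and follows essentially the same route as the paper: both reduce $\calL_{\eta,\bvare}[b](x)$ to $\lambda\int_{\Omega^c}|z-x|^{-d-2s}\,dz$ by noting that $2-b(x-y)-b(x+y)=\chi_{\Omega^c}(x-y)+\chi_{\Omega^c}(x+y)$, exploiting the symmetry of $\eta$, and observing that $\calK_{\bvare(x)}$ coincides with the singular kernel on $\Omega^c$ since $|z-x|\ge\delta(x)\ge\bvare(x)$ there. The only difference is that the paper asserts the final estimate $\int_{\Omega^c}|z-x|^{-d-2s}\,dz\gtrsim\delta(x)^{-2s}$ without proof, whereas you supply an explicit and correct argument via the exterior ball condition (a standing hypothesis on $\Omega$ in the paper).
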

\begin{proof}
Since, by construction, we have $\bvare(x) \leq \delta(x)$ for all $x \in \Omega$ we may write
\[
  \calL_{\eta,\bvare}[b](x) = \int_{\Real^d} \left( 1 - b(y) \right) \eta\left( \frac{x-y}{|x-y|} \right) \calK_{\bvare(x)}(|x-y|) \diff{y} \geq  \lambda \int_{\Omega^c} \frac1{|x-y|^{d+2s}} \diff{y}.
\]
Since
\[
  \delta(x)^{-2s} \lesssim \int_{\Omega^c} \frac1{|x-y|^{d+2s}} \diff{y},
\]
with a hidden constant that depends on $d,$ $s,$ and $\Omega$, the result follows.
\end{proof}

We are now in position to estimate the consistency of our regularized operator.

\begin{thm}[consistency]
\label{thm:Consistency}
In the setting of Theorem~\ref{thm:ConsistencyRegularization} assume, in addition,  that $w,w_\vare \in \calS^\beta(\Omega)$ verify
\[
  \calL_\eta[w] = \calL_{\eta,\bvare}[w_\vare] \quad \text{ in } \Omega, \qquad w = w_\vare = 0, \quad \text{ in } \Omega^c.
\]
Then we have
\[
  \| w - w_\vare \|_\Linf \lesssim \max\{ \vare^s, \vare^{\beta - 2s} \}.
\]
\end{thm}
\begin{proof}
Let $x \in \Omega$. Since $\bvare(x) \leq \tfrac12 \delta(x)$, arguing as in the proof of Theorem~\ref{thm:ConsistencyRegularization} we get
\begin{align*}
  \calL_{\eta,\bvare}[w-w_\vare](x) &= \calL_{\eta,\bvare}[w](x) - \calL_\eta[w](x) \lesssim \| w \|_{C^\beta(B_{\bvare(x)}(x))} \bvare(x)^{\beta-2s} \\
  &\lesssim (\delta(x) - \bvare(x))^{s-\beta} \bvare(x)^{\beta-2s} \lesssim 2^{s-\beta} \delta(x)^{s-\beta} \bvare(x)^{\beta-2s} \\
  &\lesssim \delta(x)^{3s-\beta} \bvare(x)^{\beta-2s} \delta(x)^{-2s}.
\end{align*}
If $3s-\beta \geq 0$ continue the estimate as
\[
  \calL_{\eta,\bvare}[w - w_\vare](x) \lesssim \vare^{\beta-2s} \delta(x)^{-2s} \lesssim \vare^{\beta-2s} \calL_{\eta,\bvare}[b](x),
\]
where we used the barrier function of Lemma~\ref{lem:barrier}. If, on the other hand, $3s-\beta < 0$ we use that
\[
  \delta(x)^{3s-\beta}\bvare(x)^{\beta-2s} = \left( \frac{\bvare(x)}{\delta(x)} \right)^{\beta - 3s} \bvare(x)^s \lesssim \vare^s.
\]
Gathering both cases we conclude that, for every $x \in \Omega$, we have
\[
  \calL_{\eta,\bvare}[w-w_\vare](x) \lesssim \max\{\vare^{\beta-2s},\vare^s \} \calL_{\eta,\bvare}[b](x).
\]
Applying Proposition \ref{prop:Compare} (comparison principle) to $\calL_{\eta,\bvare}$ gives the assertion.
\end{proof}

\subsection{The discretization scale}
\label{sub:FEM}

We assume that $\Omega$ is a convex polytope and let $\Triang = \{T \}$ be a conforming and shape regular simplicial triangulation of $\Omega$. The elements $T \in \Triang$ are assumed to be closed. We set, for $T \in \Triang$, $h_T = \diam(T)$. We denote by $\Vert$ the set of vertices of $\Triang$. The interior and boundary vertices are, respectively,
\[
  \VertInt = \Vert \cap \Omega, \qquad \VertBd = \Vert \cap \partial\Omega.
\]
For each interior vertex $z \in \VertInt$ we define its patch to be
\[
  \omega_z = \bigcup\left\{ T \in \Triang \ \middle| \ z \in T \right\}.
\]
By $h_z$ we denote the radius of the ball of maximal radius, centered at $z$, that can be inscribed in $\omega_z$. Over such a triangulation we define the following spaces of functions
\begin{align}
\label{eq:DefOfFeSpace}
  \Fespace &= \left\{ w_\Triang \in C(\bar\Omega) \ \middle| \ w_{\Triang|T} \in \polP_1, \ \forall T \in \Triang \right\}, \\
  \Fespace^0 &= \left\{ w_\Triang \in \Fespace \ \middle| \ w_{\Triang|\partial\Omega} = 0 \right\},
\end{align}
Notice that any function $w_\Triang \in \Fespace^0$ can be trivially extended to $\Omega^c$ by zero. When this causes no confusion, we shall not make a distinction between a function and its extension.

It is a general fact that solutions to problems involving integrodifferential operators, like \eqref{eq:TheOperator}, exhibit an algebraically singular behavior near the boundary, independently of the smoothness of the problem data. The problems that we shall be interested in are no exception; see the regularity results of Sections~\ref{sub:RegLinear} and \ref{sec:Obstacle}. To compensate this we will consider a mesh that is graded towards the boundary as it was first studied in \cite{Acosta2017a} and later in \cite{Bonito2018,Borthagaray2021,Borthagaray2019,Han2021,Han2023}. We consider a mesh size $h>0$ and parameter $\mu \geq 1$. Our mesh $\Triang$ is assumed to satisfy
\begin{equation}
\label{eq:grading}
  h_T \approx \begin{dcases}
                h^\mu, & T \cap \partial\Omega \neq \emptyset, \\
                h\dist(T,\partial\Omega)^{\tfrac{\mu-1}\mu}& T \cap \partial\Omega = \emptyset.
              \end{dcases}
\end{equation}
As shown in \cite[Remark 4.14]{Borthagaray2019} we have that
\begin{equation}
\label{eq:dimFespace}
  \dim \Fespace^0 \approx \begin{dcases}
                            h^{(1-d)\mu}, & \mu \geq \frac{d}{d-1}, \\
                            h^{-d}|\log h|, & \mu = \frac{d}{d-1}, \\
                            h^{-d}, & \mu < \frac{d}{d-1}.
                          \end{dcases}
\end{equation}
Finally, we observe that, under the condition \eqref{eq:grading}, we have 
\begin{equation}\label{eq:NodalPatchSize}
h_z \approx h \delta(z)^{1-1/\mu} \quad \forall z \in \VertInt.
\end{equation}

At this point we impose that, at least, $\frac12 h_z \le \bvare(z)$. We shall later refine this choice; see Definition~\ref{defn:ChoiceBVareLinear} and \ref{defn:ChoiceBVareObstacle} for the linear and obstacle problems, respectively.

\subsubsection{Consistency of interpolation}
\label{subsub:Interpolation}

Let $I_h : C(\bar\Omega) \to \Fespace$ be the Lagrange interpolation operator. For $z \in \VertInt$ we wish to estimate the consistency error
\[
  \calE[w,z] = \calL_{\eta,\bvare}[I_hw](z) - \calL_{\eta}[w](z),
\]
provided the function $w$ possesses suitable, but realistic, smoothness. We begin by rewriting this error as
\[
  \calE[w,z] = \left( \calL_{\eta,\bvare}[w](z) - \calL_\eta[w](z) \right) +  \calL_{\eta,\bvare}[I_h w -w ](z).
\]
The first term entails the regularization error, and it was estimated in Theorem~\ref{thm:ConsistencyRegularization}. Our immediate goal shall be to estimate the second term.

\begin{lemma}[refined interpolation estimate]
\label{lem:IntCbeta}
Let $\beta>0$ satisfy \eqref{eq:Parameters}, $\bar{\beta} = \min\{\beta, 2\}$, and $w \in \calS^\beta(\Omega)$. Furthermore, assume the mesh $\Triang$ satisfies \eqref{eq:grading}. Then, for all $T \in \Triang$, we have
\[
  \| w - I_h w \|_{L^\infty(T)} \lesssim \begin{dcases}
                                           h^{\bar\beta} \dist(T,\Omega)^{s-\bar\beta/\mu}, & T \cap \partial\Omega = \emptyset, \\
                                           h^{\mu s}, &T \cap \partial\Omega \neq \emptyset.
                                         \end{dcases}
\]
Consequently, we have the global pointwise interpolation estimate
\begin{equation}\label{eq:GlobalInterpolation}
  \| w - I_h w \|_{L^\infty(\Omega)} \lesssim \max\{ h^{\mu s} , h^{\bar\beta} \} .
  \end{equation}
\end{lemma}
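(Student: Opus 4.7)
The plan is to bound $\|w - I_h w\|_{L^\infty(T)}$ element by element, distinguishing boundary elements from interior ones, and then to take the maximum over $\Triang$. Throughout, I would rely on the standard Bramble--Hilbert pointwise interpolation estimate
\[
  \|w - I_h w\|_{L^\infty(T)} \lesssim h_T^{\bar\beta}\, |w|_{C^{\bar\beta}(T)}, \qquad \bar\beta \in (0,2],
\]
where $|\cdot|_{C^{\bar\beta}}$ denotes the top-order H\"older seminorm; this is a classical consequence of the fact that $I_h$ reproduces affine functions on each simplex.

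For a boundary element, $T \cap \partial\Omega \neq \emptyset$, the global regularity $w \in C^{0,s}(\bar\Omega)$ that is built into $\calS^\beta(\Omega)$ gives $\|w - I_h w\|_{L^\infty(T)} \lesssim h_T^s [w]_{C^{0,s}(\bar\Omega)}$, and the grading \eqref{eq:grading} yields $h_T \approx h^\mu$, whence the claimed $h^{\mu s}$. For an interior element, set $\rho_T = \dist(T,\partial\Omega)$. I would first establish the sharp seminorm bound
\[
  |w|_{C^{\bar\beta}(T)} \lesssim \rho_T^{\,s - \bar\beta}.
\]
This is immediate from the defining condition of $\calS^\beta$ when $\bar\beta = \beta$, and follows by a rescaling argument on balls $B_{\rho_T/2}(x_0) \subset \{\delta \ge \rho_T/2\}$ combined with the global $C^{0,s}$ control when $\bar\beta = 2 < \beta$. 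Combining with the interior grading $h_T \approx h\, \rho_T^{1 - 1/\mu}$ yields
\[
  \|w - I_h w\|_{L^\infty(T)} \lesssim h^{\bar\beta}\, \rho_T^{\bar\beta(1 - 1/\mu) + s - \bar\beta} = h^{\bar\beta}\, \rho_T^{\,s - \bar\beta/\mu},
\]
which is the announced interior estimate.

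The global bound \eqref{eq:GlobalInterpolation} then follows by a short case analysis on the sign of $s - \bar\beta/\mu$. If $s \ge \bar\beta/\mu$, the interior bound is maximized at $\rho_T \approx \diam(\Omega)$ and is $\lesssim h^{\bar\beta}$. If $s < \bar\beta/\mu$, the worst interior element has $\rho_T \gtrsim h^\mu$ (a consequence of $\rho_T \gtrsim h_T$ together with $h_T \approx h \rho_T^{1 - 1/\mu}$), and a direct substitution yields $h^{\mu s}$. Since boundary elements also contribute $h^{\mu s}$, the overall estimate is bounded by $\max\{h^{\mu s}, h^{\bar\beta}\}$.

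The delicate step, in my view, is the interior seminorm bound $|w|_{C^{\bar\beta}(T)} \lesssim \rho_T^{s - \bar\beta}$ in the regime $\bar\beta = 2 < \beta$: reading the $\calS^\beta$-condition as a \emph{full}-norm bound only yields the suboptimal exponent $s-\beta$, so one must recover the natural scaling for the top-order seminorm by a brief interior rescaling, or equivalently by an interpolation between the global $C^{0,s}$ bound and the local $C^\beta$ bound on $\{\delta \ge \rho_T/2\}$.
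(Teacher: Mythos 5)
Your proposal follows the same element-by-element decomposition as the paper's proof: estimate on boundary-touching elements using the global $C^{0,s}$ regularity and the grading $h_T\approx h^\mu$, estimate on interior elements using the weighted interior regularity plus $h_T\approx h\,\rho_T^{1-1/\mu}$, then take the maximum. Your derivation of the global estimate by splitting on the sign of $s-\bar\beta/\mu$ and using $\rho_T\gtrsim h^\mu$ is equivalent to (slightly more streamlined than) the paper's, which phrases the same bound in terms of $(h_z/\delta(z))^{\bar\beta-\mu s}\le 1$.

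What you rightly flag as the delicate point is where the two write-ups diverge in care. The paper's proof uses the interpolation bound with $\|w\|_{C^{\bar\beta}(T)}$ and immediately writes $\|w\|_{C^{\bar\beta}(T)}\lesssim \dist(T,\partial\Omega)^{s-\bar\beta}$, citing the definition of $\calS^\beta(\Omega)$. But that definition, read literally, controls the $C^\beta$-norm on $\{\delta\ge\rho\}$ by $\rho^{s-\beta}$, which for $\bar\beta=2<\beta$ only yields the weaker bound $\rho^{s-\beta}$ for the $C^2$-norm — and that weaker exponent does not close the computation (one would pick up a spurious factor $\rho^{\bar\beta-\beta}\ge 1$). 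Your observation that one must instead use the top-order seminorm and recover the natural scaling $\rho^{s-\bar\beta}$ via an interior rescaling, or equivalently via the interpolation inequality $[w]_{C^2(B)}\lesssim [w]_{C^{0,s}(B)}^{(\beta-2)/(\beta-s)}[w]_{C^\beta(B)}^{(2-s)/(\beta-s)}$ combined with the global $C^{0,s}$ bound, is exactly the step the paper glosses over. So your proof is correct and it explicitly supplies an intermediate argument that the paper treats as implicit.
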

\begin{proof}
If $w \in C^\alpha(T)$ for some $\alpha \in (0,2]$, we have
\[
  \| w - I_h w \|_{L^\infty(T)} \lesssim \| w \|_{C^{\alpha}(T)} h_T^{\alpha}.
\]
Now, if $T \cap \partial\Omega = \emptyset$ the definition of the class $\calS^\beta(\Omega)$, given in \eqref{eq:ClassForSol}, and the mesh grading imply that
\[
  \| w - I_h w \|_{L^\infty(T)} \lesssim \dist(T,\partial\Omega)^{s-\bar\beta}h^{\bar\beta} \dist(T,\partial\Omega)^{\bar\beta(\mu-1)/\mu} \lesssim h^{\bar\beta} \dist(T,\Omega)^{s-\bar\beta/\mu}.
\]
If, on the contrary, $T \cap \partial\Omega \neq \emptyset$, we estimate
\[
  \| w - I_h w \|_{L^\infty(T)} \lesssim \| w \|_{C^{0,s}(T)} h_T^s \lesssim h^{\mu s}.
\]

Estimate \eqref{eq:GlobalInterpolation} follows from a closer inspection of the case $T \cap \partial\Omega = \emptyset$. If $s \ge \bar\beta/\mu$ then there is nothing to prove, so we assume $s < \bar\beta/\mu$. Let $z \in \VertInt$ be a vertex of $T$. By shape regularity and \eqref{eq:NodalPatchSize}, we have $h_T \approx h_z \approx h \delta(z)^{1-1/\mu},$ and $\dist(T,\partial\Omega) \approx \delta(z)$. Therefore,
we can write
\[
  \| w - I_h w \|_{L^\infty(T)} \lesssim h^{\mu s} h^{\bar\beta - \mu s} \delta(z)^{s-\bar\beta/\mu} \approx h^{\mu s} \left( \frac{h_z}{\delta(z)}\right)^{\bar\beta- \mu s} \lesssim h^{\mu s}. \qedhere
\]
\end{proof}

We can now estimate the remaining consistency term.

\begin{prop}[consistency of interpolation]
\label{prop:Interpolate}
Let the function $\bvare : \Omega \to \Real$ be such that $\tfrac12 h_z \le \bvare(z) \le \tfrac12 \delta(z)$ for all $z \in \VertInt$. In the setting of Lemma~\ref{lem:IntCbeta} we have, for all $z \in \VertInt$,
\begin{equation} \label{eq:InterpolationConsistency}
  \left|\calL_{\eta,\bvare}[I_h w - w](z) \right| \lesssim h^{\bar\beta} \delta(z)^{s-\bar\beta/\mu}\bvare(z)^{-2s} +
  \max\{ h^{\mu s}, h^{\bar\beta} \} \delta(z)^{-2s}.
\end{equation}
\end{prop}
\begin{proof}
Let $z \in \VertInt$. We observe that $I_h w (z) = w(z)$ and that $I_h w \equiv w \equiv 0$ on $\Omega^c$.
Therefore, we have
\[
\calL_{\eta,\bvare}[I_h w - w](z) = \int_\Omega  \left( w(y) - I_h w (y)) \right) \calK_{\bvare(z)}(|z-y|) \eta\left( \frac{z-y}{|z-y|}\right ) \diff{y}  
\]
We define 
\begin{equation}\label{eq:OmegaBdry}
  \Omega^\partial =
    \bigcup \left\{ T \in \Triang \ \middle| \ T \cap \partial\Omega \neq \emptyset \right\},
\end{equation}
and partition the integration domain into
\[
  \bar\Omega = \bigcup_{i=1}^3 D_i,
\]
where
\begin{align*}
  D_1 &= \Omega^\partial  \cap 
  B_{\delta(z)/2}(z)^c, \\
  D_2 &= (\Omega \setminus \Omega^\partial ) \cap 
  B_{\delta(z)/2}(z)^c, \\
  D_3 &= B_{\delta(z)/2}(z),
\end{align*}
and estimate each term separately.

\noindent \underline{Estimate on $D_1$:} Since every point $y \in D_1$ belongs to a boundary-touching element, we use the first part of Lemma~\ref{lem:IntCbeta} to write
\begin{align*}
  & \left| \int_{D_1} \left( w(y) - I_h w (y)) \right) \calK_{\bvare(z)}(|z-y|) \eta\left( \frac{z-y}{|z-y|}\right ) \diff{y} \right|
  \\ & \hskip0.7cm\lesssim h^{\mu s} \int_{D_1} \frac1{|z-y|^{d+2s}} \diff{y} \lesssim \,
  h^{\mu s} \int_{B_{\delta(z)/2}(z)^c} \frac1{|z-y|^{d+2s}} \diff{y} \lesssim h^{\mu s}\delta(z)^{-2s}.
\end{align*}

\noindent \underline{Estimate on $D_2$:} Notice now that every element $y \in D_2$ belongs to a non-boundary-touching element. We can then invoke the other case in the first part of Lemma~\ref{lem:IntCbeta} to estimate
\begin{multline*}
  \left| \int_{D_2} \left( w(y) - I_h w (y)) \right) \calK_{\bvare(z)}(|z-y|) \eta\left( \frac{z-y}{|z-y|}\right ) \diff{y} \right| \lesssim h^{\bar\beta} \int_{D_2}  \frac{ \delta(y)^{s-\bar\beta/\mu} }{|z-y|^{d+2s}} \diff{y}.
\end{multline*}
Now, if $s - \tfrac{\bar\beta}\mu \geq 0$ we simply estimate
\[
  h^{\bar\beta} \int_{D_2}  \frac{ \delta(y)^{s-\bar\beta/\mu} }{|z-y|^{d+2s}} \diff{y} \leq h^{\bar\beta} \int_{B_{\delta(z)/2}(z)^c} \frac1{|z-y|^{d+2s}} \diff{y} \lesssim h^{\bar\beta} \delta(z)^{-2s}.
\]
If, instead, $s - \tfrac{\bar\beta}\mu < 0$ then
\[
  h^{\bar\beta} \int_{D_2}  \frac{ \delta(y)^{s-\bar\beta/\mu} }{|z-y|^{d+2s}} \diff{y} \lesssim
  h^{\bar\beta}h^{\mu (s-\bar\beta/\mu)} \int_{D_2}  \frac{ 1}{|z-y|^{d+2s}} \diff{y} \lesssim
 h^{\mu s} \delta(z)^{-2s}.
\]

\noindent \underline{Estimate on $D_3$:} Observe that $\delta(y) \geq \delta(z) - |z-y|$ and, if $y \in D_3$, we also have that $\tfrac12 \delta(z) \leq \delta(y) \leq \tfrac32 \delta(z)$. Now, since $z \in \VertInt$ we have that $\delta(z) \gtrsim h^\mu$. Consequently, if $y \in T \cap D_3$ for some $T$ such that $T \cap \partial \Omega = \emptyset$,
\[
  h_T \lesssim h \delta(y)^{1-1/\mu} \lesssim h \delta(z)^{1-1/\mu}.
\]
We, once again, invoke the estimates of Lemma~\ref{lem:IntCbeta} to obtain
\begin{align*}
  &\left| \int_{D_3 \cap (\Omega\setminus \Omega^\partial)} \big( w(y) - I_h w (y) \big) \calK_{\bvare(z)}(|z-y|) \eta\left( \frac{z-y}{|z-y|}\right ) \diff{y} \right|  \\
  & \hskip2.5cm \lesssim h^{\bar\beta} \delta(z)^{s-\bar\beta/\mu} \int_{D_3 \cap (\Omega\setminus \Omega^\partial)}  \calK_{\bvare(z)}(|z-y|) \diff{y}  \\
  & \hskip2.5cm \lesssim h^{\bar\beta} \delta(z)^{s-\bar\beta/\mu} \left( \int_{\bvare(z)}^{\delta(z)/2} \frac1{r^{1+2s}} \diff{r} + \int_{B_{\bvare(z)}(z)} \calK_{\bvare(z)}(|z-y|) \diff{y} \right)  \\
  & \hskip2.5cm \lesssim h^{\bar\beta} \delta(z)^{s-\bar\beta/\mu} \bvare(z)^{-2s}.
\end{align*}
On the other hand, if $D_3 \cap \Omega^\partial \neq \emptyset$ it means that $\dist(B_{\delta(z)/2}(z), \partial\Omega) \lesssim h^\mu$, and therefore $\delta(z) = \dist(z, \partial\Omega) \lesssim \delta(z)/2 + h^\mu$, namely  $\delta(z) \approx h^\mu \approx h_z$. Therefore, it must be
 $\bvare(z) \approx \delta(z)$.
In such a case, on $D_3 \cap \Omega^\partial$, we have
\begin{align*}
  & \left| \int_{D_3 \cap \Omega^\partial} \big( w(y) - I_h w (y) \big) \calK_{\bvare(z)}(|z-y|) \eta\left( \frac{z-y}{|z-y|}\right ) \diff{y} \right| \\
  & \hskip2.5cm \lesssim h^{\mu s} \int_{D_3 \cap \Omega^\partial} \calK_{\bvare(z)}(|z-y|) \diff{y} \\
  & \hskip2.5cm \lesssim h^{\mu s} \left( \int_{\bvare(z)}^{\delta(z)/2} \frac1{r^{1+2s}} \diff{r} + \int_{B_{\bvare(z)}(z)} \calK_{\bvare(z)}(|z-y|) \diff{y} \right)  \\
  & \hskip2.5cm \lesssim h^{\mu s} \bvare(z)^{-2s} \approx h^{\mu s} \delta(z)^{-2s}.
\end{align*}
Gathering all the preceding estimates leads to the assertion \eqref{eq:InterpolationConsistency}.
\end{proof}

\section{The linear problem}
\label{sec:Linear}

Having studied a consistent and monotone discretization of the operator $\calL_\eta$, defined in \eqref{eq:TheOperator}, we proceed to use this discretization to propose and analyze numerical methods for problems of increasing complexity. The first one shall be a linear one.

We consider the following problem: Let $s \in (0,1)$ and $\eta \in \calC(\lambda,\Lambda)$ for some $\lambda,\Lambda >0$. Given $f \in \Hsd$, find $u \in \tHs$ such that
\begin{equation}
\label{eq:TheLinearProlem}
  \calL_\eta[u] = f, \qquad \text{ in } \Omega.
\end{equation}
Notice that, by virtue of the definition of the solution space, we are implicitly providing the exterior condition $u=0$ on $\Omega^c$. Owing to the fact that $\| \cdot \|_{\eta,s}$ is an equivalent norm on $\tHs$, existence and uniqueness of a weak solution follows immediately by the Lax-Milgram theorem. In addition, since $\eta$ is positive, a nonlocal maximum principle holds from Proposition~\ref{prop:Compare} (comparison principle).

\subsection{Regularity}
\label{sub:RegLinear}

The regularity properties of $u$, solution of \eqref{eq:TheLinearProlem}, are of utmost relevance for its numerical approximation. In contrast to local elliptic operators, it is well known that solutions to \eqref{eq:TheLinearProlem} possess limited regularity near the boundary, regardless of the smoothness of $\Omega$ and $f$; see \cite{MR0185273,MR3276603,Borthagaray2023}. The following is an \emph{optimal} regularity result.

\begin{lemma}[optimal H\"older regularity]
\label{lem:hoelderest}
Let \(s\in (0,1)\) and let \(\Omega\subset \R^d\) be a bounded Lipschitz
domain that satisfies the exterior ball condition. Let \(f\in L^\infty(\Omega)\) and \(u \in \tHs \) be the (weak) solution to \eqref{eq:TheLinearProlem}. Then we have \(u\in C^{0,s}(\bar{\Omega})\) and
\[
  \|u\|_{C^{0,s}(\bar{\Omega})} \leq C \|f\|_{L^\infty(\Omega)},
\]
where the implicit constant depends only on \(d, \Omega\) and \(s\).
\end{lemma}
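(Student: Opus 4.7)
\bigskip

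The plan is to establish the bound in three stages: a global $L^\infty$ bound on $u$, a boundary Hölder estimate of the form $|u(x)| \lesssim \delta(x)^s$, and an interior scaling argument that together yield the $C^{0,s}(\bar\Omega)$ norm bound.

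First, I would derive the $L^\infty$ bound by constructing a global supersolution. Since $\Omega$ is bounded, choose a large ball $B_R \supset \Omega$ and consider $\phi(x) = c(R^2 - |x|^2)_+^s$. A direct computation (using symmetry and ellipticity $\lambda \le \eta(\theta) \le \Lambda$) shows $\calL_\eta[\phi] \gtrsim 1$ in $\Omega$. Applying Proposition~\ref{prop:Compare} (comparison principle) to $u \mp C\|f\|_{L^\infty(\Omega)}\phi$ yields $\|u\|_{L^\infty(\Omega)} \lesssim \|f\|_{L^\infty(\Omega)}$.

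Next, I would obtain the boundary decay estimate $|u(x)| \lesssim \delta(x)^s \|f\|_{L^\infty(\Omega)}$. Let $x_0 \in \partial\Omega$ and, using the exterior ball condition, pick $B_r(y_0) \subset \Omega^c$ with $x_0 \in \partial B_r(y_0)$. One constructs a local barrier of the form $\psi(x) = c\bigl((|x-y_0|^2 - r^2)_+\bigr)^s$ which satisfies $\calL_\eta[\psi] \ge 1$ in $\Omega \cap B_{2r}(y_0)$ (by positivity/ellipticity of $\eta$ and the regularity computations in \cite{RosOton2016a}) while $\psi \approx \delta(x)^s$ on $\Omega \cap B_{r}(y_0)$. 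Together with a global cutoff that dominates $u$ on $\Omega \setminus B_{2r}(y_0)$, Proposition~\ref{prop:Compare} delivers $|u(x)| \le C(\|f\|_{L^\infty} + \|u\|_{L^\infty})\,\delta(x)^s$ in a neighborhood of $x_0$, and this neighborhood can be chosen uniformly since $\partial\Omega$ is compact.

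Finally, I would upgrade to a full $C^{0,s}(\bar\Omega)$ estimate. For two points $x, y \in \bar\Omega$, let $\rho = \delta(x,y)$. If $|x-y| \ge \rho/2$, the boundary estimate above already yields $|u(x) - u(y)| \lesssim \rho^s \lesssim |x-y|^s\|f\|_{L^\infty}$. If $|x-y| < \rho/2$, we rescale: define $v(z) = \rho^{-s} u(x + \rho z)$ for $z \in B_{1/2}$. Then $v$ is bounded uniformly by the boundary estimate, and satisfies $\calL_{\eta_\rho}[v] = \rho^s f(x+\rho z)$ in $B_{1/2}$ with kernel still of class $\calC(\lambda,\Lambda)$. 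Interior Hölder estimates for translation-invariant stable operators of this class (\eg\ \cite{RosOton2016a}) give $[v]_{C^{0,s}(B_{1/4})} \lesssim \|v\|_{L^\infty(\R^d)} + \|\rho^s f\|_{L^\infty}$, which on undoing the scaling becomes the desired bound on $|u(x)-u(y)|/|x-y|^s$.

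The main obstacle is handling the general anisotropic kernel $\eta \in \calC(\lambda, \Lambda)$ rather than the constant-coefficient fractional Laplacian: both the boundary barrier computation and the interior Hölder step require results valid for merely measurable elliptic kernels. The existing literature, in particular \cite{RosOton2016a}, treats precisely this setting, so the proof is really an assembly of those ingredients under the exterior ball hypothesis on $\Omega$.
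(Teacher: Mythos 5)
The paper disposes of this lemma with a single citation to \cite[Proposition 4.5]{RosOton2016c}, so there is no in-paper proof to compare against; your proposal instead reconstructs the proof from scratch. Your reconstruction is correct and is, in fact, exactly the argument used in the Ros-Oton--Serra line of work that the paper cites: a global $L^\infty$ bound via a truncated-paraboloid supersolution $(R^2-|x|^2)_+^s$, a boundary decay estimate $|u(x)|\lesssim \delta(x)^s$ built from the exterior-ball barrier $((|x-y_0|^2-r^2)_+)^s$, and a rescaling argument that feeds the interior Hölder estimate for translation-invariant stable operators. Two minor remarks. First, in your final scaling step you should note that the interior estimate for nonlocal operators involves a weighted tail norm of $v$ over all of $\R^d$, not just $\|v\|_{L^\infty(B_{1/2})}$; the boundary decay from step two is what keeps this tail uniformly bounded as $\rho\to 0$, and it is worth saying so explicitly (a direct $L^\infty$ bound of $v$ on $\R^d$ would blow up like $\rho^{-s}$). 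Second, you correctly identify that the anisotropic kernel $\eta\in\calC(\lambda,\Lambda)$ is the delicate point; your appeal to \cite{RosOton2016a} for the barrier computations is the right move, since those lemmas (the one-dimensional barrier and the exterior-ball barrier) are stated precisely for this kernel class. In short: your route is the standard one and is what lies behind the paper's citation, so the comparison is favorable; just make the tail-control point explicit.
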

\begin{proof}
See \cite[Proposition 4.5]{RosOton2016c}.
\end{proof}

The previous regularity is optimal, as exemplified by existing explicit solutions to the fractional Laplace problem over a ball (\cf \cite{MR0137148}). The reason for this limited regularity gain lies in the fact that there is an algebraic boundary singularity present in the solution, which can be characterized via
\[
  u(x) \approx \dist(x, \partial\Omega)^s,
\]
as $x$ approaches $\partial\Omega$. Higher order regularity estimates can be obtained if one takes into account such boundary behavior. In \cite{RosOton2016} such results were obtained for the fractional Laplacian over weighted H\"older spaces, where the weights are given by powers of the distance to the boundary.

\begin{defn}[weighted H\"older space]
Let \(\sigma> -1\) and \(\beta = k+\gamma >0\) with
\(k\in\N_0, \gamma \in (0,1]\). For \(w\in C^{k}(\Omega)\) define the seminorm
\[
  |w|_{\beta,\Omega}^{(\sigma)} = \sup_{x,y\in\Omega:x \neq y}
  \left(\delta(x,y)^{\beta+\sigma}
  \frac{\left|D^k w(x) - D^k w(y)\right|}{|x-y|^{\gamma}}\right),
\]
and the norm
\[
  \|w\|_{\beta,\Omega}^{(\sigma)} = \sum_{\ell =1 }^{k}
  \sup_{x\in\Omega}\left( \delta(x)^{\ell+\sigma}|D^\ell w(x)|\right) +
  |w|_{\beta,\Omega}^{(\sigma)} +
  \begin{dcases}
      \sup_{x\in\Omega} \left(\delta(x)^\sigma|v(x)|\right), & \sigma\geq 0,\\
      \|v\|_{C^{0,-\sigma}(\bar{\Omega})}, & \sigma<0.
  \end{dcases}
\]
\end{defn}

The methods used in \cite{RosOton2014} can be adapted to operators of the form
\eqref{eq:TheOperator} to obtain the following weighted H\"older regularity.

\begin{lemma}[weighted H\"older estimates]
\label{lem:weightedhoelderest}
Let \(\Omega\) be a bounded domain that satisfies the exterior ball condition, and let $\beta>0$ satisfy \eqref{eq:Parameters}. Let \(f\in C^{\beta-2s}(\Omega) \)
be such that \(\|f\|_{\beta-2s,\Omega}^{(s)}<\infty\). If, for $k \in\polN$, \(\beta\geq k\), then
we additionally assume that \(\eta\in C^k(\polS^{d-1})\). In this setting, the solution of \eqref{eq:TheLinearProlem} satisfies \(u\in C^{\beta}(\Omega)\) and
\[
  \|u\|_{\beta,\Omega}^{(-s)} \lesssim  \|u\|_{C^{0,s}(\R^d)} + \|f\|_{\beta-2s,\Omega}^{(s)} ,
\]
where the implicit constant only depends  on  \(d\), \(\Omega\), \(\lambda\), \(\Lambda\) and \(s\).
\end{lemma}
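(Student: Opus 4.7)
The plan is to adapt the scheme of \cite{RosOton2014} for the fractional Laplacian, substituting the isotropic singular kernel there by a general elliptic kernel $\eta \in \calC(\lambda, \Lambda)$. The key technical input is an \emph{interior Schauder estimate} for the translation invariant operator $\calL_\eta$: if $v \in L^\infty(\R^d)$ solves $\calL_\eta[v] = g$ on $B_{2R}(x_0)$ with $g \in C^{\beta - 2s}(B_{2R}(x_0))$, then
\[
  R^\beta [v]_{C^\beta(B_R(x_0))} \lesssim \sup_{r \geq R} \Bigl( \frac{R}{r} \Bigr)^{s} \|v\|_{L^\infty(B_r(x_0))} + R^{2s} \|g\|_{C^{\beta - 2s}(B_{2R}(x_0))},
\]
with constant depending on $d, s, \beta, \lambda, \Lambda$. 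For $\beta \in (2s, 1 + s)$ this reduces to the first-order estimate of \cite[Theorem 3.1]{RosOton2014}, whose proof uses only the symmetry and ellipticity of the kernel. For larger $\beta$ one bootstraps by iterated tangential differentiation of the equation, and the assumption $\eta \in C^k(\polS^{d-1})$ ensures that each differentiated operator remains in the class $\calC(\lambda', \Lambda')$ modulo lower order terms that can be absorbed into the right-hand side.

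Next, I would localize by rescaling. Fix $x_0 \in \Omega$, set $\rho = \delta(x_0)/4$, and define $v(y) = u(x_0 + \rho y)$. Translation invariance and the $2s$-homogeneity of $\calL_\eta$ give $\calL_\eta[v](y) = \rho^{2s} f(x_0 + \rho y) =: g(y)$ on $B_2$. The global $C^{0,s}(\R^d)$ bound from Lemma~\ref{lem:hoelderest}, together with $u \equiv 0$ on $\Omega^c$, yields the tail control
\[
  \sup_{r \geq 1} r^{-s} \|v\|_{L^\infty(B_r)} \lesssim \rho^s \|u\|_{C^{0,s}(\R^d)},
\]
while the hypothesis on $f$ and the fact that $\delta(x) \approx \rho$ on $B_\rho(x_0)$ give $\rho^{2s} \|g\|_{C^{\beta - 2s}(B_2)} \lesssim \rho^s \|f\|_{\beta - 2s, \Omega}^{(s)}$. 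Plugging these bounds into the interior Schauder estimate at scale $R = 1$ and rescaling back to $B_{\rho/2}(x_0)$ produces the pointwise bound
\[
  \delta(x_0)^{\ell - s} |D^\ell u(x_0)| + \delta(x_0)^{\beta - s} [u]_{C^\beta(B_{\rho/2}(x_0))} \lesssim \|u\|_{C^{0,s}(\R^d)} + \|f\|_{\beta - 2s, \Omega}^{(s)}
\]
for all $1 \le \ell \le k$.

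Finally, the weighted seminorm $|u|_{\beta, \Omega}^{(-s)}$ is controlled by splitting into cases for arbitrary $x, y \in \Omega$: when $|x - y| \le \delta(x, y)/2$, the local $C^\beta$ seminorm bound above, centered at one of the two points, suffices; when $|x - y| > \delta(x, y)/2$, the individual pointwise bounds on $D^k u$ at both endpoints, combined with $\delta(x, y)^{\beta - s} \le 2^\gamma \delta(x, y)^{k - s} |x - y|^\gamma$, close the estimate. The remaining $C^{0,s}(\bar\Omega)$ contribution to $\|u\|_{\beta, \Omega}^{(-s)}$ is supplied directly by Lemma~\ref{lem:hoelderest}. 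The main obstacle is the interior Schauder estimate at high regularity: differentiating $\calL_\eta$ along directions tangent to $\polS^{d-1}$ produces new operators whose kernels involve angular derivatives of $\eta$, and one has to verify that these still satisfy Schauder estimates of the same form up to lower order remainders. This is the essential technical content of \cite{RosOton2014} that has to be transposed from the isotropic fractional Laplacian to the anisotropic operator $\calL_\eta$.
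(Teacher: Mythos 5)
Your approach is essentially the same as the paper's: rescale to the unit ball centered at $x_0$ by a factor proportional to $\delta(x_0)$, invoke an interior Schauder estimate for $\calL_\eta$ analogous to \cite[Theorem~3.1, Corollary~2.4]{RosOton2014}, translate the global $C^{0,s}$ bound into tail control, rescale back, and iterate when $\beta - 2s > s$. One conceptual correction, however: you assert that the bootstrap for larger $\beta$ proceeds by "iterated tangential differentiation" which "produces new operators whose kernels involve angular derivatives of $\eta$." This is not the case. Since $\eta$ depends only on the direction of the increment $y/|y|$ and not on the base point $x$, the operator commutes with every spatial derivative, $D^k\,\calL_\eta[w] = \calL_\eta[D^k w]$; the paper states this explicitly and uses it to keep the operator unchanged through the iteration, so no new kernels or lower-order remainders arise. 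The smoothness assumption $\eta \in C^k(\polS^{d-1})$ is needed instead inside the interior Schauder estimate itself (the analogue of \cite[Corollary~2.4]{RosOton2014}): bounding higher derivatives of solutions to the homogeneous equation $\calL_\eta[v] = 0$ requires regularity of the kernel, which is where the anisotropy becomes genuinely relevant. With that understanding in place, your scaling computations, tail estimate, and reassembly of the weighted seminorm agree with the paper's argument.
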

\begin{proof}
The result can be obtained by using the same procedure as in \cite[Proposition 1.4]{RosOton2014} for $\eta \equiv 1$. For completeness, we repeat the main parts of the argument. 

Let \(x_0\in \Omega\) and \(R= \frac{\delta(x_0)}{K}\) for some \(K\in \N\). We define \(\tilde{u}(x) 
= u(x_0 + Rx)- u(x_0)\) and, following \cite[Proposition 1.4]{RosOton2014}, it is possible to show that
\begin{align}
\label{eq:part1}
  \|\tilde{u}\|_{C^{0,s}(B_1)} &\leq CR^s [u]_{C^{0,s}(B_R(x_0))}\\
  \|(1+|\cdot|)^{-d-2s}\tilde{u}(\cdot)\|_{L^1(\R^n)}  &\leq C R^s [u]_{C^{0,s}(\R^d)}. \label{eq:part2}
\end{align}
In addition, if \(\beta \leq 1\),
\begin{align*}
  \sup_{x,y\in B_1} \frac{|\mathcal{L}_\eta[\tilde{u}](x) - \mathcal{L}_\eta[\tilde{u}](y)|}{|x-y|^{\beta}} 
  &= R^{2s+\beta}\sup_{x,y\in B_R(x_0))} \frac{\left| \mathcal{L}_\eta[u](x) - \mathcal{L}_\eta[u](y) \right|}{|x-y|^{\beta}}.
\end{align*}
We also observe that, since the coefficient $\eta$ is independent of $x$, for any $k \in \polN_0$
\begin{align*}
  D^k\mathcal{L}_\eta[\tilde{u}](x) = R^k \mathcal{L}_\eta[D^k u](x).
\end{align*}
We use this in the case $\beta > 1$ to assert that, for any $\beta>0$ that satisfies \eqref{eq:Parameters},
\begin{align}\label{eq:part3}
  \|\mathcal{L}_\eta[\tilde{u}]\|_{C^\beta(B_1)} \lesssim R^{2s+\beta} \|\mathcal{L}_\eta[u]\|_{C^\beta(B_R(x_0))}
  \leq R^{s} \|\mathcal{L}_\eta[u]\|_{\beta, \Omega}^{(s)}.
\end{align}

The estimates so far do not use the regularity of the coefficient \(\eta\), but only its boundedness and translation invariance. Now we can repeat the proof \cite[Corollary~2.4]{RosOton2014} using the additional regularity on \(\eta\) together with the 
estimates \eqref{eq:part1}---\eqref{eq:part3} to get
\begin{equation}
\label{eq:est1}
  \begin{aligned}
    \|\tilde{u}\|_{C^{\beta}(B_{1/2})} &\lesssim  \|(1+|\cdot|)^{-d-2s}\tilde{u}(\cdot)\|_{L^1(\R^d)} 
      + \|\tilde{u}\|_{C^{\beta-2s}(B_2)}+ \|\mathcal{L}_\eta[\tilde{u}]\|_{C^{\beta-2s}(B_2)} \\
    &\lesssim  R^s\left([u]_{C^{0,s}(\R^d)} + \|f\|_{\beta-2s, \Omega}^{(s)} \right) + \|\tilde{u}\|_{C^{\beta-2s}(B_2)},
  \end{aligned}
\end{equation}
where we assumed that \(K\geq 2\) to bound the term 
\(\|\mathcal{L}_\eta[\tilde{u}]\|_{C^{\beta-2s}(B_2)}\) appropriately. 
We reiterate that the regularity of the coefficient \(\eta\) is only used in order to be able to apply similar arguments to those in the proof of \cite[Corollary~2.4]{RosOton2014}.

Now, for \(\beta-2s \leq s\), the claim follows from estimate \eqref{eq:est1}. Indeed, using that, if \(y\in B_R(x_0)
\), we have
\[
  \|D^{\ell}\tilde{u}\|_{L^\infty(B_{1/2})} = R^{\ell}\|D^{\ell}u\|_{L^\infty(B_{R/2}(x_0))},
\]
and 
\(\delta(x,y)/|x-y| \leq K\) if \(x\) and \(y\) are far away from each other. 

If, instead, we have that \(\beta-2s > s\), we repeat the argument in \eqref{eq:est1} for 
\(\|\tilde{u}\|_{C^{\beta-2s}(B_2)}\) in total $m$ times until \(\beta-2sm\leq s\). Choosing \(K\) large enough, but finite (depending on \(s\)), finishes the proof.
\end{proof}

As a consequence of Lemmas \ref{lem:hoelderest} and \ref{lem:weightedhoelderest}, we have the following regularity estimate away from the boundary.

\begin{lemma}[interior H\"older estimate]
\label{lem:hoelderboundary}
Let \(\Omega\) be a bounded Lipschitz domain that satisfies the exterior ball condition and let \(f\), \(\beta\), and $\eta$ satisfy the same assumptions as in Lemma~\ref{lem:weightedhoelderest}. For every $\rho>0$ we have
\[
  \|u\|_{C^{\beta}(\left\{x\in \Omega : \delta(x)\geq \rho \right\})} \lesssim \rho^{s-\beta},
\]
for a hidden constant only depending on \(\|f\|_{\beta-2s,\Omega}^{(s)},
\|f\|_{L^{\infty}(\Omega)}\), \(s\), and \(\Omega\).
\end{lemma}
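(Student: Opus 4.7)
The plan is to derive the interior estimate directly from the weighted H\"older bound of Lemma~\ref{lem:weightedhoelderest}, using Lemma~\ref{lem:hoelderest} only to absorb the $\|u\|_{C^{0,s}(\R^d)}$ term on its right-hand side.

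First I would write $\beta = k+\gamma$ with $k \in \polN_0$ and $\gamma \in (0,1]$ as in the definition of $\calS^\beta(\Omega)$, and record that Lemma~\ref{lem:weightedhoelderest} supplies the bound
\[
  \|u\|_{\beta,\Omega}^{(-s)} \lesssim \|u\|_{C^{0,s}(\R^d)} + \|f\|_{\beta-2s,\Omega}^{(s)}.
\]
Because $u$ vanishes on $\Omega^c$, $\|u\|_{C^{0,s}(\R^d)}$ is controlled by $\|u\|_{C^{0,s}(\bar\Omega)}$, which by Lemma~\ref{lem:hoelderest} is in turn bounded by $\|f\|_{L^\infty(\Omega)}$. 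Hence the right-hand side is dominated by $\|f\|_{L^\infty(\Omega)} + \|f\|_{\beta-2s,\Omega}^{(s)}$, a quantity I will henceforth call $M$.

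The second step is to read off the pointwise consequences of $\|u\|_{\beta,\Omega}^{(-s)} \lesssim M$ on the interior set $\Omega_\rho=\{x\in\Omega:\delta(x)\geq\rho\}$. From the weighted sup term, for every $1\le \ell\le k$ and every $x\in\Omega_\rho$,
\[
  |D^\ell u(x)| \lesssim M\,\delta(x)^{s-\ell} \lesssim M\,\rho^{s-\ell},
\]
since $\ell-s>0$ implies $\delta(x)^{s-\ell}\le\rho^{s-\ell}$. From the weighted seminorm, for every $x,y\in\Omega_\rho$ with $x\neq y$ we have $\delta(x,y)\ge\rho$, hence
\[
  \frac{|D^k u(x) - D^k u(y)|}{|x-y|^\gamma} \lesssim M\,\delta(x,y)^{s-\beta} \lesssim M\,\rho^{s-\beta}.
\]
Finally, the $\|u\|_{C^{0,s}(\bar\Omega)}$ contribution to the weighted norm already gives $\|u\|_{L^\infty(\Omega_\rho)} \lesssim M$.

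To combine these into a single estimate in the $C^\beta(\Omega_\rho)$ norm, I would use that $\rho$ is bounded above by $\diam(\Omega)$, so each of the exponents $s-\ell$ (for $0\le \ell\le k$) exceeds $s-\beta$ and therefore $\rho^{s-\ell} \lesssim \rho^{s-\beta}$, with constants depending only on $\beta$ and $\diam(\Omega)$. Gathering the pointwise, derivative, and H\"older-seminorm bounds yields
\[
  \|u\|_{C^\beta(\Omega_\rho)} \lesssim M\,\rho^{s-\beta},
\]
which is the claim. The only step that requires any care is the harmless but necessary observation in the last paragraph that all weighted powers collapse into a single $\rho^{s-\beta}$; everything else is a direct translation of the weighted estimate into standard norms on $\Omega_\rho$.
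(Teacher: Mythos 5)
Your proposal is correct and takes essentially the same approach as the paper's own proof, which is simply a reference to \cite[Corollary 2.5]{Han2021}: translate the weighted bound of Lemma~\ref{lem:weightedhoelderest} into a standard $C^\beta$ norm on $\Omega_\rho$ by replacing each $\delta(\cdot)$ weight by $\rho$ and invoking Lemma~\ref{lem:hoelderest} to bound $\|u\|_{C^{0,s}(\R^d)}$ by $\|f\|_{L^\infty(\Omega)}$. One tiny imprecision: in the last paragraph the $\ell=0$ exponent is $0$ (from $\|u\|_{L^\infty(\Omega_\rho)}\lesssim M$), not $s$; the collapse to $\rho^{s-\beta}$ still holds since $0 > s-\beta$, so the conclusion is unaffected.
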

\begin{proof}
Repeat the proof of \cite[Corollary 2.5]{Han2021}.
\end{proof}

In short, the previous results show that $u$, the solution of \eqref{eq:TheLinearProlem}, satisfies
\[
u \in \calS^\beta(\Omega), \qquad \|u\|_{\beta,\Omega}^{(-s)} < \infty.
\]

\subsection{Pointwise error estimates}
\label{sub:ErrorLinear}

We have reached the point where we can propose a numerical scheme for \eqref{eq:TheLinearProlem} and provide an error analysis for it. To begin with, we need to make a precise choice of the regularization scale $\bvare$.

\begin{defn}[$\bvare$ for the linear problem]
\label{defn:ChoiceBVareLinear}
For any interior vertex $z \in \VertInt$, we set
\[
  \bvare(z) := \frac12 h_z^{1/2} \delta(z)^{1/2}.
\]
\end{defn}

At this point we make some comments regarding the choice in Definition~\ref{defn:ChoiceBVareLinear}. First, notice that close to the boundary we have
\[
  \bvare(z) \approx h_z \approx \delta(z).
\]
Moreover, $\bvare(z)\le \frac12 \delta(z)$  because $h_z \le \delta(z)$ according to the definition of $h_z$,
whence $B_{\bvare(z)}(z) \subset \Omega$. On the other hand, away from the boundary we have $\delta(z) \approx 1$, so that
\[
  \bvare(z) \approx h_z^{1/2}.
\]
This scaling bears resemblance to the scalings used in two-scale methods for (local) linear second order elliptic problems; see; \cite{Neilan2017,NEILAN2020,MR3939307,Li2018,Nochetto2018,Nochetto2019a,Nochetto2019b}.

We now describe the scheme. For $\bvare$ given according to Definition~\ref{defn:ChoiceBVareLinear}, our scheme seeks for $u_h \in \Fespace^0$ such that
\begin{equation}
\label{eq:TheLinearProlemh}
  \calL_{\eta,\bvare}[u_h](z) = f(z), \quad \forall z \in \VertInt.
\end{equation}
We comment that, since this is a finite dimensional problem, existence and uniqueness of a solution are implied immediately by the following discrete comparison principle for the operator $\calL_{\eta,\bvare}$. 

\begin{prop}[discrete comparison principle]
\label{prop:DiscreteComparison}
Let $\bvare$ be such that, for every $z \in \VertInt$, we have $\bvare(z) \geq \tfrac12 h_z$. Assume that $v_h, w_h \in \Fespace^0$ are such that
\begin{equation} \label{eq:DiscreteComparison}
\calL_{\eta,\bvare}[v_h](z) \ge \calL_{\eta,\bvare}[w_h](z), \quad \forall z \in \VertInt.
\end{equation}
Then,
\begin{equation}\label{eq:DiscreteComparison-outcome}
  v_h(z) \ge w_h(z) \quad \forall z \in \VertInt.
\end{equation}
\end{prop}
\begin{proof}  The proof follows by a simple argument. Suppose the inequality \eqref{eq:DiscreteComparison} is strict and that \eqref{eq:DiscreteComparison-outcome} is false, namely that the function $v_h - w_h$ attains a (non-positive) minimum at an interior node $z \in \VertInt.$ Then, we have
\[
  v_h(z) - w_h(z) \le  v_h(y)  - w_h(y)
\ \implies \
  v_h(z) - v_h(y) \le w_h(z)  - w_h(y) \quad \forall y \in \Real^d
\]
whence we get the contradiction
\[
  \calL_{\eta,\bvare}[v_h](z) \le \calL_{\eta,\bvare}[w_h](z).
\]
This shows that, in case of strict inequality in \eqref{eq:DiscreteComparison}, it must be that $v_h > w_h$ in $\Omega$.

Assume next that the inequality  \eqref{eq:DiscreteComparison} is not strict. Consider the discrete barrier function $b_h = I_h b$, where $b = \chi_\Omega$ and $I_h$ denotes the Lagrange interpolant. We have $\calL_{\eta,\bvare}[b_h](z) > 0$ for all $z \in \VertInt$. 
Therefore, for $\epsilon > 0$, we have the strict inequality
\[
\calL_{\eta,\bvare}[v_h + \epsilon b_h](z) > \calL_{\eta,\bvare}[w_h](z)  \quad \forall z \in \VertInt,
\]
from which it follows that $v_h (z)+ \epsilon b_h (z) > w_h(z)$ for all $z \in \VertInt$. Letting $\epsilon \to 0$, we obtain the desired result.
\end{proof}

We now proceed with the error analysis of our scheme. Using the choice of $\bvare$ given by Definition \ref{defn:ChoiceBVareLinear}, we begin by making the results from Proposition~\ref{prop:Interpolate} more precise.

\begin{prop}[consistency of interpolation for the linear problem]
\label{prop:InterpolateLinear}
Let the regularization scale $\bvare$ verify Definition~\ref{defn:ChoiceBVareLinear}, $\beta$ satisfy \eqref{eq:Parameters}, $\bar{\beta} = \min\{\beta, 2\}$, and $w \in \calS^\beta(\Omega)$. We have
\[
  \big|\calL_{\eta,\bvare}[I_h w - w](z) \big| \lesssim 
  \max\{ h^{\mu s}, h^{\bar\beta - s} \} \delta(z)^{-2s}  \quad \forall z \in \VertInt.
\]
\end{prop}
\begin{proof}
Let $z \in \VertInt$. By \eqref{eq:InterpolationConsistency}, it suffices to show that 
\[h^{\bar\beta} \delta(z)^{s-\bar\beta/\mu}\bvare(z)^{-2s} \lesssim   \max\{ h^{\mu s}, h^{\bar\beta - s} \} \delta(z)^{-2s}. \]
We consider the set $\Omega^\partial$ given by \eqref{eq:OmegaBdry}.

If $z \in \Omega^\partial$, then $\delta(z) \approx h_z \approx h^\mu$. Combining this with $\bvare(z) = \frac12 h_z^{1/2} \delta(z)^{1/2} \approx \delta(z)$, we obtain
\[
h^{\bar\beta} \delta(z)^{s-\bar\beta/\mu}\bvare(z)^{-2s} \approx h^{\mu s} \delta(z)^{-2s}.
\] 
In contrast, if $z \notin \Omega^\partial$, then $z \in T$ for some $T$ with $T \cap \partial \Omega = \emptyset$ and therefore $h_z \approx h_T \approx h \delta(z)^{1-1/\mu}$ and $\bvare(z) = \frac12 h_z^{1/2} \delta(z)^{1/2}$ If $s \ge (\bar\beta-s)/\mu$, then we have
\[
h^{\bar\beta} \delta(z)^{s-\bar\beta/\mu}\bvare(z)^{-2s} \approx h^{\bar\beta -s} \delta(z)^{s-(\bar\beta-s)/\mu}  \delta(z)^{-2s} \lesssim h^{\bar\beta -s} \delta(z)^{-2s}.
\]
Otherwise, we write $h  \approx h_z \delta(z)^{-1+1/\mu}$ and 
\[
h^{\bar\beta} \delta(z)^{s-\bar\beta/\mu}\bvare(z)^{-2s} \approx h_z^{\bar\beta - s} \delta(z)^{-\bar\beta+2s} \delta(z)^{-2s} \approx h^{\mu s}  \left( \frac{h_z}{\delta(z)} \right)^{\bar\beta-s-\mu s} \delta(z)^{-2s}.
\]
Because $h_z \le \delta(z)$ and $\bar\beta-s-\mu s > 0$, the second term in the right hand side is uniformly bounded above.
\end{proof}

It remains then to obtain error estimates. This is the content of the following result.

\begin{thm}[error estimate]
\label{thm:ErrorLinear}
Let $\Omega$ be a convex polytope, $s \in (0,1)$, and $\beta \leq 4$ is such that \eqref{eq:Parameters} holds. Define $\bar\beta = \min\{ \beta, 2\}$. Let $f \in C^{\beta-2s}(\Omega)\cap \Linf$ be such that $\|f\|_{\beta-2s,\Omega}^{(s)}<\infty$. Assume that $\eta \in \calC(\lambda,\Lambda)$ and that if, for some $k \in \polN$, $\beta >k$, then $\eta \in C^{k}(\polS^{d-1})$. Let $u \in \calS^\beta(\Omega)$ solve \eqref{eq:TheLinearProlem} and $u_h \in \Fespace^0$ solve \eqref{eq:TheLinearProlemh}. If $\Triang$ satisfies \eqref{eq:grading} and $\bvare$ is chosen as in Definition~\ref{defn:ChoiceBVareLinear}, then we have
\[
  \| u- u_h \|_\Linf \lesssim \max \big\{ h^{\mu s}, h^{\bar\beta - s}, h^{\beta/2 - s} \big\}.
\]
\end{thm}
\begin{proof}
We split $u - u_h = \left( u - I_h u \right) + \left( I_h u - u_h \right)$. Hence, to prove the claim, by \eqref{eq:GlobalInterpolation} it suffices to estimate the second term. We do so by estimating the consistency
\[
  \left| \calL_{\eta,\bvare}[I_h u - u_h](z) \right| \leq \left| \calL_{\eta,\bvare}[I_h u - u](z) \right| + \left| \calL_{\eta,\bvare}[u - u_h](z) \right| = \mathrm{I} + \mathrm{II},
\]
for $z \in \VertInt$, and then applying Proposition~\ref{prop:DiscreteComparison} (discrete comparison principle).

Proposition \ref{prop:InterpolateLinear} (consistency of interpolation for the linear problem) yields 
\[
\mathrm{I} \lesssim   \max\{ h^{\mu s}, h^{\bar\beta - s} \} \delta(z)^{-2s}  \approx   \max\{ h^{\mu s}, h^{\bar\beta - s} \} \calL_{\eta,\bvare}[b](z),
\]
where we recall that $b$ is the function introduced in Lemma~\ref{lem:barrier} (barrier). Additionally, the choice of $\bvare$, identity \eqref{eq:NodalPatchSize}, and Theorem~\ref{thm:ConsistencyRegularization} (interior consistency) with $\alpha_0 = 2$, give
\begin{align*}
\mathrm{II} = \left| \calL_{\eta,\bvare}[u](z) - \calL_{\eta}[u](z) \right| &\lesssim \bvare(z)^{\beta - 2s} \delta(z)^{s-\beta} \\ & \approx h^{\beta/2 - s} \delta(z)^{s -\frac{\beta/2 -s}\mu} \calL_{\eta,\bvare}[b](z).
\end{align*}
Thus, if $s -\frac{\beta/2 -s}\mu \ge 0$ we are done. In case $s < \frac{\beta/2 -s}\mu$, we write instead $h \approx h_z \delta(z)^{1/\mu - 1}$ and obtain
\begin{align*}
\mathrm{II} &\lesssim h^{\beta/2 - s} \delta(z)^{s -\frac{\beta/2 -s}\mu} \calL_{\eta,\bvare}[b](z) \\ & \approx h^{\mu s}  h_z^{\beta/2 - \mu s- s} \delta(z)^{(1/\mu - 1)(\beta/2 - \mu s- s)} \delta(z)^{s -\frac{\beta/2 -s}\mu} \calL_{\eta,\bvare}[b](z) \\ 
& \approx h^{\mu s}  \left( \frac{h_z}{\delta(z)} \right)^{\beta/2 - \mu s- s} \calL_{\eta,\bvare}[b](z).
\end{align*}
This concludes the proof.
\end{proof}

\begin{remark}[complexity estimate]
\label{rem:ComplexityLinear}
Let us try to interpret the estimates of Theorem~\ref{thm:ErrorLinear} in terms of degrees of freedom. To shorten the notation we set, only for this discussion, $N = \dim \Fespace^0$. Furthermore, we will assume that the right hand side $f$ is as smooth as possible, \ie we let $\beta > 4$ and satisfy \eqref{eq:Parameters}. This regularity assumption implies that $\beta/2 > \bar\beta = 2$.

First, if $d =2$, we can choose $\mu = 2$ so that, according to \eqref{eq:dimFespace}, $N \approx h^{-2}|\log h|$. As a consequence,
\begin{equation}\label{eq:error-N}
  \| u - u_h \|_\Linf \lesssim \max\left\{ N^{-s}|\log N|^{s} , N^{-1+s/2}|\log N|^{1-s/2} \right\}.
\end{equation}
Therefore, with respect to the number of degrees of freedom, and up to logarithmic factors, we obtain convergence with order $s$ for $s \le 2/3$ and with order $1-s/2$ for $s > 2/3$. We point out that for $s\le 2/3$ the error is dominated by the boundary regularity $C^s$ which entails an interpolation error $h^{\mu s} \approx N^{-s}$; hence \eqref{eq:error-N} is optimal. Additionally we observe that, for $s > 2/3$, one does not need to take $\mu = 2$ in two dimensions, and that the maximal convergence rate is attained whenever $\mu s = 2 -s$ \ie $\mu = \frac{2-s}s<2$ for which $N\approx h^{-2}$ according to \eqref{eq:dimFespace}; any extra mesh refinement does not translate in an improvement of convergence rates, although it affects the conditioning of the resulting system.

On the other hand, for $d=3$ we must choose $\mu= \tfrac32$ if we wish to maintain a near optimal number of degrees of freedom, \ie $N \approx h^{-3}|\log h|$. Thus,
\[
  \| u - u_h \|_\Linf \lesssim \max\left\{ N^{-s/2}|\log N|^{s/2} , N^{-2/3+s/3}|\log N|^{2/3-s/3} \right\}.
\]
Again, we observe that, if $-s/2 \le -2/3 + s/3$, namely if $s \ge 4/5$, the maximal convergence order $(2-s)/3$ is attained for meshes graded with $\mu = (2-s)/s<3/2$.
\end{remark}

\begin{remark}[relationship between $\bvare$ and $f$]
\label{rem:FnonSmooth}
Definition \ref{defn:ChoiceBVareLinear} is suitable for sufficiently smooth right hand sides, namely, it formally delivers optimal convergence rates in case $f \in C^{\beta-2s}(\Omega)\cap \Linf$ be such that $\|f\|_{\beta-2s,\Omega}^{(s)}<\infty$ with $\beta \ge 4$. We recall that, since we are using a second difference formula, the interior consistency of the regularized operator cannot exploit any regularity beyond $\calS^4(\Omega)$, cf. Theorem \ref{thm:ConsistencyRegularization}.
Let us briefly comment on what one can obtain when $f$ satisfies the assumptions above but with $\beta \in (2s,4)$ and satisfying \eqref{eq:Parameters}.

We let $\bvare(z) = \frac12 h_z^\alpha \delta(z)^{1-\alpha}$ with $\alpha \in [0,1]$, that clearly satisfies $\frac12 h_z \le \bvare(z) \le \frac12 \delta(z)$. By doing the same calculations as in Proposition \ref{prop:InterpolateLinear} (consistency of interpolation for the linear problem), we obtain
\[
  \left|\calL_{\eta,\bvare}[I_h w - w](z) \right| \lesssim 
  \max\{ h^{\mu s}, h^{\bar\beta - 2 \alpha s} \} \delta(z)^{-2s}  \quad \forall z \in \VertInt.
\]
Arguing then as in the proof of Theorem \ref{thm:ErrorLinear} (error estimate), we obtain
\[
  \| u- u_h \|_\Linf \lesssim \max \{ h^{\mu s}, h^{\bar\beta - 2\alpha s}, h^{\alpha(\beta - 2s)} \}.
\]
Now, if $\beta \in (2,4)$, we have $\bar\beta = 2$ and
\[
2 - 2\alpha s = \alpha(\beta - 2s) \ \Rightarrow \alpha = \frac2\beta.
\] 
Therefore, choosing $\bvare(z) = \frac12 h_z^{\frac2\beta} \delta(z)^{1-\frac2\beta}$ yields the error estimate
\[
  \| u- u_h \|_\Linf \lesssim \max \{ h^{\mu s}, h^{2-\frac{4s}\beta} \}.
\]
In contrast, if $\beta \in (2s, 2)$, we observe $\bar\beta=\beta$ and we get that
\[
  \beta - 2\alpha s = \alpha(\beta - 2s) \quad  \implies \quad \alpha = 1.
\] 
In this low regularity case, setting $\bvare(z) = \frac12 h_z$ gives rise to
\[
  \| u- u_h \|_\Linf \lesssim \max \{ h^{\mu s}, h^{\beta-2s} \}.
\]
The latter will be of interest in the approximation of the obstacle problem in the next section, and justifies Definition \ref{defn:ChoiceBVareObstacle} below.
\end{remark}

\section{The obstacle problem}
\label{sec:Obstacle}

As the next application of our two-scale discretization, we will consider the 
following nonlinear problem. In the setting of Section~\ref{sec:Linear} we 
assume that, in addition, we have $\psi : \bar\Omega \to \Real$ that satisfies 
$\psi <0$ on $\partial\Omega$. We seek for  $u \in \tHs$ that satisfies
\begin{equation}
\label{eq:TheObstacleHJ}
  \min\left\{ \calL_\eta[u] - f, u - \psi \right\} = 0, \quad \mae \ \Omega.
\end{equation}

While existence and uniqueness of a weak solution is classical, the regularity of such solution is more delicate. Following \cite{Borthagaray2019} we introduce the 
classes
\begin{equation}
\label{eq:ObstacleData}
  \calF_s(\bar\Omega) = C^{3-2s+\epsilon}(\bar\Omega), \qquad \Psi = \left\{ \psi \in C(\bar\Omega): \psi_{|\partial\Omega}<0 \right\} \cap C^{2,1}(\Omega),
\end{equation}
where $\epsilon>0$ is sufficiently small, so that $1-2s+ \epsilon \notin \polN$.

For future use we define the contact and non-contact sets as follows:
\[
  \Omega^0 = \left\{ x \in \Omega \ \middle| \ u(x) = \psi(x) \right\}, \qquad
  \Omega^+ = \left\{ x \in \Omega \ \middle| \ u(x) > \psi(x) \right\}.
\]

In order to obtain rates of convergence, we must understand the regularity of the solution. This can be achieved by combining the arguments in \cite{Borthagaray2019}, \cite{Caffarelli2017}, and the regularity of the linear problem presented in Section~\ref{sub:RegLinear}. Namely, one first proves the result for a problem in the whole space $\R^d$, and then use a localization argument; see \cite{Borthagaray2019} for details in the case $\eta \equiv 1$.

\begin{prop}[H\"older regularity]
\label{prop:RegObstacle}
Assume that $f \in \calF_s$ and $\psi \in \Psi$. Then, the solution $u \in \tHs$ of \eqref{eq:TheObstacleHJ} satisfies $u \in C^{1,s}(\Omega)$ and
\[
  \| \calL_\eta[u] \|_{1-s,\Omega}^{(s)} < \infty.
\]
\end{prop}

\begin{remark}[pointwise evaluation]
\label{rem:PtwiseEval}
Notice that, since $\psi<0$ on $\partial\Omega$, the solution to \eqref{eq:TheObstacleHJ} solves the linear problem $\calL_\eta[u] = f$ in a neighborhood of the boundary. By the interior regularity of the previous result we additionally have $\calL_\eta[u] \in C^{0,1-s}(\Omega)$. As a consequence, pointwise evaluation in $\Omega$ of the operator is meaningful.
\end{remark}

\subsection{Pointwise error estimates}
\label{sub:ObstacleDiscrete}

Let us now present a numerical scheme for the obstacle problem \eqref{eq:TheObstacleHJ} and derive pointwise error estimates for it. We seek for $u_h \in \Fespace^0$ such that
\begin{equation}
\label{eq:TheObstacleh}
  \min\left\{ \calL_{\eta,\bvare}[u_h](z) - f(z), u_h(z) - \psi(z) \right\} = 0, \quad \forall z \in \VertInt.
\end{equation}

For this problem, we shall make a different choice of $\bvare$ than for the linear problem. The reason behind this is that even if the data is sufficiently smooth, the solution to the obstacle problem possesses a limited interior regularity, compare Lemma~\ref{lem:weightedhoelderest} (weighted H\"older estimates) with Proposition~\ref{prop:RegObstacle} (H\"older regularity); see also the discussion of the case $\beta \in (2s,2)$ in Remark~\ref{rem:FnonSmooth}.

\begin{defn}[choice of $\bvare$ for the obstacle problem]
\label{defn:ChoiceBVareObstacle}
Let $z \in \VertInt$. We set
\[
  \bvare(z) = \frac12 h_z.
\]
\end{defn}

Existence and uniqueness of $u_h$ follow from the fact that we are in finite dimensions and the comparison principle for $\calL_{\eta,\bvare}$. Of interest here is the derivation of pointwise error estimates. The technique that we will use is rather classical and can be traced back to \cite{MR0488847,MR0488848}, see also \cite{MR3393323}. We begin by introducing the notions of sub- and supersolutions to the obstacle problem.

\begin{defn}[sub- and supersolution]
We say that $u_h^+ \in \Fespace$ is a supersolution to \eqref{eq:TheObstacleh} if $u_h^+ \geq 0$ in $\Omega^c$ and, for all $z \in \VertInt$, we have
\[
  u_h^+(z) \geq \psi(z), \qquad \calL_{\eta,\bvare}[u_h^+](z) \geq f(z).
\]
On the other hand, we say that $u_h^- \in \Fespace$ is a subsolution to \eqref{eq:TheObstacleh} if $u_h^- \leq 0$ in $\Omega^c$ and, for every $z \in \VertInt$, if $u_h^-(z) \geq \psi(z)$, then
\[
  \calL_{\eta,\bvare}[u_h^-](z) \leq f(z).
\]
\end{defn}

Proposition \ref{prop:DiscreteComparison} (discrete comparison principle) for the operator $\calL_{\eta,\bvare}$ gives a comparison for sub- and supersolutions.

\begin{lemma}[discrete comparison]
\label{lem:DiscCompareObstacle} 
Let $u_h^+,u_h^- \in \Fespace$ be super- and subsolutions to \eqref{eq:TheObstacleh}, and $u_h \in \Fespace^0$ be the solution to \eqref{eq:TheObstacleh}. Then, for every $z \in \Vert$ we have
\[
  u_h^-(z) \leq u_h(z) \leq u_h^+(z).
\]
\end{lemma}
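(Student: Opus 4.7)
The strategy is the standard sub/supersolution argument, reducing the nonlinear comparison to the linear discrete comparison principle in Proposition~\ref{prop:DiscreteComparison} by exploiting the complementarity in \eqref{eq:TheObstacleh} at the nodes where the relevant difference would attain a bad sign.

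I will first show $u_h \leq u_h^+$ at every vertex. The natural candidate is to argue by contradiction: if $u_h - u_h^+ > 0$ somewhere, then, because $u_h = 0 \leq u_h^+$ on $\partial\Omega$ and $u_h = 0 \leq u_h^+$ on $\Omega^c$, the positive part of $u_h - u_h^+$ attains its maximum at some interior vertex $z \in \VertInt$. At such $z$ we have $u_h(z) > u_h^+(z) \geq \psi(z)$, so the complementarity condition in \eqref{eq:TheObstacleh} forces $\calL_{\eta,\bvare}[u_h](z) = f(z)$; combined with the supersolution inequality $\calL_{\eta,\bvare}[u_h^+](z) \geq f(z)$ this gives $\calL_{\eta,\bvare}[u_h - u_h^+](z) \leq 0$. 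To turn this into a contradiction with the maximum, I will use the barrier trick from the proof of Proposition~\ref{prop:DiscreteComparison}: replace $u_h^+$ by $u_h^+ + \epsilon b_h$, where $b_h = I_h \chi_\Omega$ satisfies $\calL_{\eta,\bvare}[b_h] > 0$ at every interior node (Lemma~\ref{lem:barrier}). The perturbed function remains a supersolution and satisfies the strict inequality $\calL_{\eta,\bvare}[u_h^+ + \epsilon b_h] > f$ at every interior vertex; repeating the previous computation yields $\calL_{\eta,\bvare}[u_h - u_h^+ - \epsilon b_h](z) < 0$ at the putative interior maximum, which is incompatible with $z$ being a maximizer (by the pointwise monotonicity argument in Proposition~\ref{prop:DiscreteComparison}). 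Hence $u_h \leq u_h^+ + \epsilon b_h$, and sending $\epsilon \downarrow 0$ gives $u_h \leq u_h^+$.

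The lower bound $u_h^- \leq u_h$ is proved in a symmetric manner. Suppose $u_h^- - u_h > 0$ at some interior vertex; the maximum of $u_h^- - u_h$ is then attained at some $z \in \VertInt$. At that node, $u_h^-(z) > u_h(z) \geq \psi(z)$, so the subsolution condition gives $\calL_{\eta,\bvare}[u_h^-](z) \leq f(z)$, while $u_h$ satisfies $\calL_{\eta,\bvare}[u_h](z) \geq f(z)$ unconditionally from \eqref{eq:TheObstacleh}. Combining yields $\calL_{\eta,\bvare}[u_h^- - u_h](z) \leq 0$, and the same barrier perturbation (this time subtracting $\epsilon b_h$ from $u_h^-$, or equivalently adding to $u_h$) produces the strict inequality needed to contradict maximality; passing to the limit closes the argument.

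The only conceptual hurdle is the step in which I invoke the complementarity at a presumed interior extremum; the rest is boilerplate discrete maximum principle bookkeeping plus the $\epsilon b_h$ perturbation already established in Proposition~\ref{prop:DiscreteComparison}. Two small sanity checks I will record along the way: first, that the interior max of $u_h - u_h^+$ (resp.\ of $u_h^- - u_h$) really lies in $\VertInt$ because the boundary/exterior values of $u_h$, $u_h^+$, $u_h^-$ give the correct sign there; and second, that the definition of subsolution requires $u_h^-(z) \geq \psi(z)$ to trigger $\calL_{\eta,\bvare}[u_h^-](z) \leq f(z)$, a condition automatically satisfied at the maximum $z$ since $u_h^-(z) > u_h(z) \geq \psi(z)$.
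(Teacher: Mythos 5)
Your proposal is correct and is essentially the same argument as the paper's: the paper partitions $\Vert$ into a coincidence set and its complement (using the operator inequality on one and the pointwise inequality on the other) and then cites ``a variant of Proposition~\ref{prop:DiscreteComparison}'', while you unfold exactly that variant by locating the would-be interior extremum, invoking complementarity there, and closing the contradiction with the $\epsilon b_h$ barrier. Same mechanism, just with the black-boxed comparison-principle step made explicit.
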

\begin{proof}
We consider each inequality separately. Let $u_h^-$ be a subsolution and consider the set of nodes
\[
  C_- = \left\{ z \in \Vert \ \middle| \ u_h^-(z) \geq \psi(z) \right\}.
\]
Now, if $z \in C_-$, we have
\[
  \calL_{\eta,\bvare}[u_h^-](z) \leq f(z) \leq \calL_{\eta,\bvare}[u_h](z).
\]
If, on the other hand $z \notin C_-$, then
\[
  u_h^-(z) < \psi(z) \leq u_h(z).
\]
In summary, the function $w_h = u_h^- - u_h \in \Fespace$ verifies
\[
  \calL_{\eta,\bvare}[w_h](z) \leq 0, \quad z \in C_-, \qquad w_h(z) \leq 0, \quad z \notin C_-.
\]
A variant of Proposition \ref{prop:DiscreteComparison} (discrete comparison principle) then yields $w_h \leq 0$.

Let now $u_h^+$ be a supersolution. Consider now the discrete contact set
\[
  C_+ = \left\{ z \in \Vert \ \middle| \ u_h(z) = \psi(z) \right\},
\]
and observe that, if $z \in C_+$,
\[
  u_h(z) = \psi(z) \leq u_h^+(z).
\]
On the other hand, if $z \notin C_+$ we have
\[
  \calL_{\eta,\bvare}[u_h^+](z) \geq f(z) = \calL_{\eta,\bvare}[u_h](z).
\]
In conclusion, the function $w_h = u_h^+ - u_h \in \Fespace$ satisfies
\[
  \calL_{\eta,\bvare}[w_h] \geq 0, \quad z \notin C_+, \qquad w_h(z) \geq 0, \quad z \in C_+.
\]
Once again, a variant of Proposition \ref{prop:DiscreteComparison} (discrete comparison principle) yields $w_h \geq 0$.
\end{proof}

Next we need to present a suitable discrete proxy for $u$. Namely, we consider $R_hu \in \Fespace^0$ to be such that, for every $z \in \VertInt$,
\begin{equation}
\label{eq:GalerkinProjection}
  \calL_{\eta,\bvare}[R_hu](z) = \calL_\eta[u](z).
\end{equation}
Recall that, as detailed in Remark~\ref{rem:PtwiseEval} (pointwise evaluation), the right hand side is meaningful. The approximation power of $R_hu$ is the content of the following result.

\begin{col}[projection error]
Let $u$ be the solution of \eqref{eq:TheObstacleHJ} and $R_hu$ be defined in \eqref{eq:GalerkinProjection}. If the regularization scale $\bvare$ is chosen according to Definition~\ref{defn:ChoiceBVareObstacle}, then we have
\begin{equation}
\label{eq:ProjError}
  \| u - R_h u \|_\Linf \lesssim \frake(h) = \max\{ h^{\mu s}, h^{1-s} \},
\end{equation}
with an implicit constant that is independent of $h$.
\end{col}
\begin{proof}
We begin by recalling that, as indicated by Proposition~\ref{prop:RegObstacle} (H\"older regularity), we must set $\beta = 1+s$.

Observe that, owing to \eqref{eq:GalerkinProjection},
\[
  \mathrm{I} = \left| \calL_{\eta, \bvare}[u - R_h u] \right| = \left| \calL_{\eta, \bvare}[u] - \calL_\eta[u] \right| \lesssim \bvare(z)^{1-s} \delta(z)^{-1},
\]
where, in the last step, we used Theorem~\ref{thm:ConsistencyRegularization} (interior consistency).

Using the the current choice of $\bvare$ and \eqref{eq:NodalPatchSize} we then continue this estimate as
\begin{align*}
  \mathrm{I} &\lesssim \left( h \delta(z)^{1-\tfrac1\mu} \right)^{1-s} \delta(z)^{-1} = h^{1-s} \delta(z)^{-2s} \delta(z)^{s +\tfrac{s-1}\mu} \lesssim h^{1-s} \delta(z)^{-2s},
\end{align*}
provided that $s + \tfrac{s-1}\mu \geq 0$.

If, on the other hand, we have $s + \tfrac{s-1}\mu < 0$ we use that $\delta(z) \gtrsim h_z \gtrsim h^\mu$ to obtain
\[
  h^{1-s} \delta(z)^{s +\tfrac{s-1}\mu} \lesssim h^{1-s} h^{\mu\left(s +\tfrac{s-1}\mu\right)} \approx h^{\mu s},
\]
so that, in all cases,
\[
  \mathrm{I} \lesssim \max\{ h^{\mu s}, h^{1-s} \} \delta(z)^{-2s} .
\]

The rest of the proof follows exactly as that of Theorem~\ref{thm:ErrorLinear} (error estimate).
\end{proof}

With this proxy of the solution at hand we are ready to obtain error estimates.

\begin{thm}[error estimates]
\label{thm:ErrObstacle}
In the setting of Proposition~\ref{prop:RegObstacle} H\"older regularity), let $u$ solve \eqref{eq:TheObstacleHJ} and $u_h \in \Fespace^0$ solve \eqref{eq:TheObstacleh}. If $\Triang$ satisfies \eqref{eq:grading} and $\bvare$ is chosen as in Definition~\ref{defn:ChoiceBVareObstacle}, we have that
\[
\| u - u_h \|_\Linf \lesssim \frake(h),
\]
where the quantity $\frake(h)$ is defined in \eqref{eq:ProjError}.
\end{thm}
\begin{proof}
We will construct suitable super- and subsolutions to the discrete obstacle problem \eqref{eq:TheObstacleh} and apply Lemma~\ref{lem:DiscCompareObstacle} (discrete comparison) to conclude. Notice that, owing to \eqref{eq:ProjError}, there is a sufficiently large $C>0$ for which
\[
  u - C \frake(h) \leq R_h u \leq u + C \frake(h).
\]

\noindent \underline{Supersolution:} Let $u_h^+ = R_h u + C_1 \frake(h) \in \Fespace$, where the constant $C_1>0$ is to be chosen, and notice that $u_h^+ \geq 0$ in $\Omega^c$. Moreover, for $z \in \VertInt$, with the barrier function $b = \chi_\Omega$ from Lemma \ref{lem:barrier} (barrier), we get
\[
  \calL_{\eta,\bvare}[u_h^+](z) = \calL_{\eta,\bvare}[R_h u](z)  +  C_1 \frake(h) \calL_{\eta,\bvare}[b](z) >  \calL_\eta [u](z) \geq f(z).
\]
In addition, if $C_1 \geq C$, we have, for any $z \in\VertInt $,
\[
  u_h^+(z) \geq u(z) + (C_1 - C)\frake(h) \geq \psi(z) + (C_1 - C)\frake(h) \geq \psi(z).
\]
We have then shown that $u_h^+$ is a supersolution for the obstacle problem. This implies, via Lemma~\ref{lem:DiscCompareObstacle}, that for every $x \in \Omega$ we have
\[
  u_h(x) \leq u_h^+(x) \leq u(x) + (C+C_1)\frake(h),
\]
so that
\[
  u_h(x) - u(x) \lesssim \frake(h).
\]

\noindent \underline{Subsolution:} We now define $u_h^- = R_h u - C_2\frake(h) \in \Fespace$ where $C_2>0$ is to be chosen. Notice that $u_h^- \leq 0$ in $\Omega^c$. We will show that $u_h^-$ is a subsolution. To see this, let $z \in \VertInt$ and assume that $u_h^-(z) \geq \psi(z)$. Then,
\[
  \psi(z) \leq u_h^-(z) \leq u(z) - (C_2 - C)\frake(h) < u(z),
\]
provided $C_2 > C$. The fact that this inequality is strict shows that
\[
  \calL_{\eta,\bvare}[u_h^-](z) = \calL_{\eta,\bvare}[R_h u](z)  -  C_2 \frake(h) \calL_{\eta,\bvare}[b](z) <  \calL_\eta[u](z) = f(z),
\]
so that indeed $u_h^-$ is a subsolution. We invoke once again Lemma~\ref{lem:DiscCompareObstacle} to obtain
\[
  u(x) - (C + C_2)\frake(h) \leq u_h^-(x) \leq u_h(x), \quad \forall x\in \Omega,
\]
as we needed to show.
\end{proof}

\begin{remark}[optimality and complexity]
\label{rem:OptComplexObstacle}
We comment that the rate of convergence of Theorem~\ref{thm:ErrObstacle}, as expressed by the quantity $\frake(h)$ is optimal for our proof technique. To see this, we recall that near the boundary the solution to the obstacle problem behaves like that of the linear problem, \ie $u(z) \approx \delta(z)^s$, so that the rate of interpolation is at best $h^{\mu s}$. On the other hand, the interior regularity of the solution is, at best, $C^{1,s}$. Since our operator $\calL_\eta$ is of order $2s$, and our proof technique is based on comparison principles, the rate in the interior can be at best $h^{1+s-2s}= h^{1-s}$, as we have obtained.

Finally, with the notation and conventions of Remark~\ref{rem:ComplexityLinear} (complexity estimate) let us present the following estimates in terms of degrees of freedom
\[
  \| u - u_h \|_\Linf \lesssim 
  \begin{dcases}
    \max\left\{ N^{-s} |\log N|^s, N^{(s-1)/2} |\log N|^{(1-s)/2} \right\}, & d = 2, \\
    \max\left\{ N^{-s/2} |\log N|^{s/2}, N^{(s-1)/3} |\log N|^{(1-s)/3} \right\}, & d = 3.
  \end{dcases}
\]
\end{remark}

\subsection{Regularity of free boundaries}
\label{sub:FreeBoundaryObstacle}

Of particular importance in applications is the so-called free boundary, which is the boundary of the contact set 
\[
  \Gamma = \partial \Omega^0 \cap \Omega.
\]
In this section we are concerned with the regularity of this set. 

\subsubsection{Regular points}
We begin with the regularity of the free boundary near \emph{regular points} 
which, roughly speaking, are those at which the function $u-\psi$ grows at a 
rate $1+s$. Define $d(x) = \dist(x,\Omega^0)$ to be the distance from $x \in 
\Omega$ to the contact set $\Omega^0$. The following result is proved in \cite[Theorem 1.1]{Caffarelli2017}.

\begin{thm}[regular points I]
\label{thm:RegPoints}
Let $\alpha \in (0, \min\{s, 1-s\})$. Let $\psi \in \Psi$ and $x_0 \in \Gamma$ be a regular point. Then, there exist positive constants $a(x_0)$ and $r(x_0)$ such that
\[
  u(x)- \psi(x) = a(x_0) d(x)^{1+s} + \frako(|x-x_0|^{1+s+\alpha})
\]
for all $x \in B_{r(x_0)}(x_0) \cap \Omega^+$. Moreover, the set of points satisfying this property is an open set of $\Gamma$ and is locally a $C^{1,\gamma}$ graph for all $\gamma \in (0,s)$. Finally,
\[
  u \in C^{1,s}\left( B_{r(x_0)}(x_0) \right)
\]
\end{thm}

\begin{remark}[singular points]
The points of $\Gamma$ that are not regular are called \emph{singular}. According to \cite[Theorem 1.1]{Caffarelli2017}, these points satisfy
\[
  u(x) - \psi(x) = \frako( |x - x_0|^{1+s+\alpha} ).
\]
Singular points do in fact occur and are characterized in \cite{MR3783214} for the case $\eta \equiv 1$.
\end{remark}

\begin{remark}[$C^{1,\gamma}$ smoothness]
Consider a point $x_0 \in \Gamma$, the interface, with normal vector $\bnu$, and  $x =x_0 + r_0 \bnu$ for a sufficiently small $r_0 > 0$. If $\Gamma$ is $C^2$, then the closest point to $x$ in $\Gamma$ is $x_0$. However, if $\Gamma$ is of class $C^{1,\gamma}$ with $\gamma < 1$ (as in the conclusion of Theorem~\ref{thm:RegPoints} ), then this is not the case anymore and the distance $d(x,\Gamma)$ may be realized at a point different than $x_0$.
\end{remark}

The following variant of Theorem~\ref{thm:RegPoints}, which avoids the use of the distance function $d$, is stated in \cite[Theorem 4.4.1]{RosOtonBook}.

\begin{thm}[regular points II]
\label{thm:NewRegPoints}
Let $\alpha \in (0, \min\{s, 1-s\})$, $\theta > \max\{0,2s-1\}$, and $\gamma \in (0,s)$. Let the obstacle $\psi \in C_0^{2,\theta}(\R^d)$. Let $x_0 \in \Gamma$ be a free boundary point. Then, there is $r_0>0$  such that, for all $x \in B_{r_0}(x_0)$, we have:
\begin{enumerate}[(i)]
  \item either
  \begin{equation}
  \label{eq:RegPoint}
    u(x) -\psi(x) = a_0 \left( (x-x_0) \cdot \bnu \right)_+^{1+s} + \calO( |x-x_0|^{1+s+\gamma} ),
  \end{equation}
  for some $a_0>0$, $\bnu \in \polS^{d-1}$, and $\gamma > 0$,
  
  \item or
  \begin{equation}
  \label{eq:SingPoint}
    u(x) - \psi(x) = \calO( |x-x_0|^{1+s+\alpha}).
  \end{equation}
\end{enumerate}
Moreover, the set of points satisfying \eqref{eq:RegPoint} (regular points) is an open set of $\Gamma$, and it is locally a $C^{1,\gamma}$ manifold.
\end{thm}

The following result is presented in \cite[Corollary 4.5.3]{RosOtonBook} without proof. Since this estimate will be useful in our constructions we present a proof.

\begin{lemma}[H\"older continuity]
\label{lem:HolderContaNb}
Let $x_0 \in \Gamma$ be a regular point, namely, one that satisfies \eqref{eq:RegPoint}.
In the setting of Theorem~\ref{thm:NewRegPoints}, the vector and scalar fields
\[
  \bfb(z) = \lim_{x \to z} \frac1{d(x)^s} \GRAD (u(x) - \psi(x)), \qquad a(z) = \frac{|\bfb(z)|}{1+s}
\]
are of class $C^{0,\gamma}$ in $B_{r_0}(x_0) \cap \Gamma$.
\end{lemma}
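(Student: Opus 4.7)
My plan is to reduce the regularity of $\bfb$ and $a$ to that of the coefficients $a_0(z),\bnu(z)$ appearing in the expansion of Theorem~\ref{thm:NewRegPoints}. At each regular point $z \in B_{r_0}(x_0)\cap \Gamma$, that theorem provides
\[
  (u-\psi)(x) = a_0(z)\big((x-z)\cdot\bnu(z)\big)_+^{1+s} + \calO\big(|x-z|^{1+s+\gamma}\big),
\]
with constants uniform for $z$ in the regular portion of $\Gamma$ (after possibly shrinking $r_0$). Since that portion is a $C^{1,\gamma}$ manifold by Theorem~\ref{thm:NewRegPoints}, its unit normal $\bnu(z)$ is automatically of class $C^{0,\gamma}$.

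Next I would identify $\bfb(z) = (1+s)\,a_0(z)\,\bnu(z)$ and $a(z) = a_0(z)$. For $x \in \Omega^+$ close to $z$, the $C^{1,\gamma}$ regularity of $\Gamma$ yields $d(x) = (x-z)\cdot\bnu(z) + \calO\big(|x-z|^{1+\gamma}\big)$, so the task reduces to the gradient expansion
\[
  \nabla(u-\psi)(x) = (1+s)a_0(z)\big((x-z)\cdot\bnu(z)\big)_+^{s}\bnu(z) + \frako(d(x)^s),
\]
as $x\to z$ inside $\Omega^+$. I expect this passage from the pointwise expansion to its gradient form to be the main technical step. The natural route is a blow-up argument: by Proposition~\ref{prop:RegObstacle} the solution $u-\psi$ lies in $C^{1,s}$ uniformly in a neighborhood of $x_0$, so the rescalings $r^{-(1+s)}(u-\psi)(z+r\,\cdot)$ are bounded in $C^{1,s}_{\text{loc}}$ and, by the pointwise expansion, converge in $C^{1}_{\text{loc}}$ to the profile $y \mapsto a_0(z)(y\cdot\bnu(z))_+^{1+s}$. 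This upgrades the pointwise convergence to derivatives and yields the claimed identification upon dividing by $d(x)^s$ and letting $x\to z$.

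It remains to verify that $a_0$ is $C^{0,\gamma}$ on the regular set. Given two regular points $z_1,z_2 \in B_{r_0}(x_0)\cap \Gamma$ with $r:=|z_1-z_2|$ small, I would test both expansions at $x := z_1 + r\bnu(z_1)$. For small $r$ we have $x \in \Omega^+$, and the $C^{1,\gamma}$ regularity of $\Gamma$ supplies
\[
  |\bnu(z_1)-\bnu(z_2)| \lesssim r^\gamma, \qquad |(z_1-z_2)\cdot\bnu(z_2)| \lesssim r^{1+\gamma},
\]
whence a direct computation gives $(x-z_2)\cdot\bnu(z_2) = r + \calO(r^{1+\gamma}) > 0$ and $|x-z_2|\le 2r$. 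Substituting $x$ into both expansions and subtracting yields
\[
  \big(a_0(z_1)-a_0(z_2)\big)\,r^{1+s} = \calO\big(r^{1+s+\gamma}\big),
\]
so that $|a_0(z_1)-a_0(z_2)| \lesssim r^\gamma$. Combined with the $C^{0,\gamma}$ regularity of $\bnu$, this concludes the proof.
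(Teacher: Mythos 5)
Your strategy departs substantially from the paper's. The paper's proof is essentially a one-liner: it invokes \cite[Proposition 4.4.15]{RosOtonBook}, which asserts an \emph{a priori} bound $\|\tfrac{1}{d^s}\nabla v\|_{C^{0,\gamma}(B_{r_0}(x_0))} \le c_0$ on the \emph{full ball}, so that $\bfb$ is simply the trace on $\Gamma$ of a globally $C^{0,\gamma}$ field and the estimate follows by passing to limits. You instead attempt to manufacture the $C^{0,\gamma}$ bound on $\Gamma$ from the pointwise expansion of Theorem~\ref{thm:NewRegPoints}. That is a genuinely different route, and your step comparing $a_0(z_1)$ and $a_0(z_2)$ by evaluating both expansions at $x = z_1 + r\bnu(z_1)$ is a clean idea (modulo verifying that the implied constants in the expansion are locally uniform in the base point, which you assert but do not prove).

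However, there is a real gap in the identification step, where you pass from the zeroth-order expansion to the gradient expansion and then to $\bfb(z)$. Your compactness/blow-up argument, combined with an interpolation estimate, gives
\[
  \nabla(u-\psi)(x) = (1+s)a_0(z)\big((x-z)\cdot\bnu(z)\big)_+^{s}\bnu(z) + \frako\big(|x-z|^s\big),
\]
but the error is controlled in terms of $|x-z|^s$, not $d(x)^s$. For tangential approaches $x \to z$ within $\Omega^+$ — which are permitted by a $C^{1,\gamma}$ free boundary — one has $d(x) \sim |x-z|^{1+\gamma} \ll |x-z|$, and then the remainder $\frako(|x-z|^s)$ is \emph{not} $\frako(d(x)^s)$. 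For the same reason, the main factor $\big((x-z)\cdot\bnu(z)/d(x)\big)_+^s$ need not converge to $1$: in local graph coordinates this quotient is $\big(x_d/(x_d - g(x'))\big)^s$ and one can approach $z$ tangentially with $g(x')/x_d$ bounded away from zero. Consequently your argument does not establish the \emph{existence} of the two-sided limit defining $\bfb(z)$ (it only determines the nontangential limit), and therefore does not justify the identification $\bfb(z) = (1+s)a_0(z)\bnu(z)$ on which the rest of the proof depends. This is precisely what \cite[Proposition 4.4.15]{RosOtonBook} provides in the paper's proof, and it is the missing ingredient here. To make your route rigorous you would either have to prove a boundary Harnack-type estimate (or expansion for $\tfrac1{d^s}\nabla v$ itself) yielding control uniform up to tangential directions, or restrict the definition of $\bfb$ to a nontangential limit and verify this agrees with the paper's.
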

\begin{proof}
Let $x_1,x_2 \in \Gamma \cap B_{r_0}(x_0)$ be two arbitrary free boundary points  which we assume to be regular points and so to satisfy Theorem~\ref{thm:NewRegPoints}. Since the set of regular points is open in $\Gamma$, this may require further restricting $r_0>0$.

Denote $v = u - \psi$. It is possible to show that \cite[Proposition 4.4.15]{RosOtonBook}
\[
  \left\| \frac1{d^s} \GRAD v\right\|_{C^{0,\gamma}(B_{r_0}(x_0))} \leq c_0,
\]
whence
\[
  \left| \frac1{d(x)^s} \GRAD v(x) - \frac1{d(y)^s} \GRAD v(y) \right| \leq c_0 |x-y|^\gamma, \qquad \forall x, y \in B_{r_0}(x_0).
\]
Since $\Gamma$ is $C^{1,\gamma}$ within $B_{r_0}(x_0)$ we let $\bnu_1,\bnu_2 \in \polS^{d-1}$ be the unit normals to $\Gamma$ at $x_1,~x_2$ pointing towards $\Omega^+$. Let $x \to x_1$, $y \to x_2$ and use the fact that
\[
  \lim_{x \to x_1} \frac1{d(x)^s} \GRAD v(x) = \bfb(x_1), \qquad \lim_{y \to x_2} \frac1{d(y)^s} \GRAD v(y) = \bfb(x_2),
\]
to deduce that
\[
  \left| \bfb(x_1) - \bfb(x_2) \right| \leq c_0 |x_1-x_2|^\gamma,
\]
whence $\bfb \in C^{0,\gamma}(B_{r_0}(x_0) \cap \Gamma)$. Since $a(x_i) =\tfrac1{1+s} |\bfb(x_i)|$, we infer that
\[
  |a(x_1) - a(x_2)| \lesssim | |\bfb(x_1)| - |\bfb(x_2)| | \leq |\bfb(x_1) - \bfb(x_2)| \leq c_0 |x_1 - x_2|^\gamma.
\]
This is the desired estimate for $a$.
\end{proof}

To exploit the previous result we make the following convenient, but realistic, regularity assumption on $\Gamma$.

\begin{assume}[regular points]
\label{assume:nonsing}
  The free boundary $\Gamma$ consists only of regular points, namely those that satisfy Theorem~\ref{thm:RegPoints} or \eqref{eq:RegPoint}.
\end{assume}

Next we discuss the fundamental \emph{nondegeneracy properties} (NDP). Given $\vare>0$ we let $\frakS(\Gamma,\vare)$ denote a strip of thickness $\vare$ around the free boundary $\Gamma$, namely,
\[
  \frakS(\Gamma,\vare) = \left\{ x \in \Omega \ \middle| \ d(x) = \dist(x,\Gamma) < \vare \right\}, \qquad 
  \frakS^+(\Gamma,\vare) = \frakS(\Gamma,\vare) \cap \Omega^+.
\]

The following result is known as an NDP in distance. It prescribes a pointwise behavior of $u - \psi$ (growth with rate at least $1+s$) if one is close to a regular point of the free boundary $\Gamma$.

\begin{col}[NDP in distance]
\label{cor:NDPDist}
Let $K \Subset \Omega$ and set $\tilde\Gamma = \Gamma \cap K$.
If Assumption~\ref{assume:nonsing} is valid, then there exists constants $a, \vare_0>0$ such that
\[
  u(x) - \psi(x) \geq a d(x)^{1+s}, \qquad \forall x\in \frakS^+(\tilde\Gamma,\vare_0).
\]
\end{col}
\begin{proof}
We proceed in several steps.
\begin{enumerate}[1.]
  \item Since, by Assumption~\ref{assume:nonsing}, every point $x_0 \in \tilde\Gamma$ is regular we have that $a(x_0) = \tfrac{|\bfb(x_0)|}{1+s} >0$, and $a$ and $\bfb$ are of class $C^{0,\gamma}$ in view of Lemma~\ref{lem:HolderContaNb}. We then deduce that
  \[
    a = \frac12 \min_{x_0 \in \tilde\Gamma} a(x_0) > 0,
  \]
  because $\tilde\Gamma$ is compact.
  
  \item Let $v = u-\psi$, $x_0 \in \tilde\Gamma$, and $r_0>0$ be such that
  \[
    \left\| \frac1{d^s} \GRAD v \right\|_{C^{0,\gamma}(B_{r_0}(x_0))} \leq C_0 = C(x_0).
  \]
  Given $x \in B_{r_0/2}(x_0)$ we let $x_1 \in \tilde\Gamma \cap B_{r_0}(x_0)$ be a point at minimal distance, \ie
  \[
    d(x) = |x-x_1| = (x-x_1) \cdot \bnu_1.
  \]
  Then, for $y$ close to $x_1$, we have
  \[
    \left| \frac1{d(x)^s} \GRAD v(x) - \frac1{d(y)^s} \GRAD v(y) \right| \leq C_0 |x-y|^\gamma,
  \]
  whence, upon computing the limit as $y \to x_1$, we get 
  \[
    \left| \frac1{d(x)^s} \GRAD v(x) - \bfb_1 \right| \leq C_0 |x-x_1|^\gamma,
  \]
  This implies that
  \[
    \frac1{d(x)^s} \partial_{\bnu_1} v(x) \geq \bfb_1 \cdot \bnu_1 - C_0 |x-x_1|^\gamma = |\bfb_1| - C_0 |x-x_1|^\gamma \geq (1+s) a,
  \]
  provided $r_0^\gamma C_0 \leq (1+s) a$, upon restricting $r_0$ if necessary. Therefore, we deduce the nondegeneracy property for the normal derivative
  \[
    \partial_{\bnu_1}v(x) \geq (1+s)a d(x)^s = (1+s) a \left( (x-x_1)\cdot \bnu_1 \right)_+^s.
  \]

  \item Let $x(t) = tx + (1-t) x_1$ denote any point in the segment joining $x_1$ and $x$, $t \in [0,1]$. Then $x_1 \in \tilde\Gamma$ is again a point in $\tilde\Gamma$ at a minimal distance to $x(t)$. The previous point implies then that
  \[
    \frac{\diff v(x(t))}{\diff t} = \partial_{\bnu_1} v(x(t)) |x-x_1| \geq (1+s)a t^s |x-x_1|^{1+s},
  \]
  or
  \[
    v(x(t)) = v(x(1)) - v(x(0)) = \int_0^1 \frac{\diff v(x(t))}{\diff t} \diff t \geq a|x-x_1|^{1+s}, 
  \]
  for all $x \in B_{r_0/2}(x_0)$. This is the desired local nondegeneracy property.
  
  \item We cover $\tilde\Gamma$ with balls $B_{r_0/2}(x_0)$ for every $x_0 \in \tilde\Gamma$. Since $\tilde\Gamma$ is compact, there is a finite subcovering
  \[
    \tilde\Gamma \subset \bigcup_{m=1}^M B_{r_m/2}(x_m).
  \]
  Finally, let $\vare_0>0$ be the distance from $\tilde\Gamma$ to the complement of $\cup_{m=1}^M B_{r_m/2}(x_m)$. Then every $x \in \frakS^+(\tilde\Gamma, \vare_0)$ belongs to a ball $B_{r_m/2}(x_m)$ for which the previous step applies.
\end{enumerate}
This concludes the proof.
\end{proof}

\begin{remark}[NDP]
Observe that Corollary~\ref{cor:NDPDist} implies the following weaker form of nondegeneracy: For all $x_0 \in \tilde\Gamma$ and $r \in (0,\vare_0]$ we have
\begin{equation}
\label{eq:wNDP}
  \sup_{x \in B_r(x_0)} \left( u(x) - \psi(x) \right) \geq a r^{1+s}.
\end{equation}
This inequality is also valid for all $\vare_0 < r \leq \diam(\Omega)$ because
\[
  \vare_0 = \frac{\vare_0}r r \geq \frac{\vare_0}{\diam(\Omega)} r,
\]
which yields \eqref{eq:wNDP} with the constant $a$ replaced by
\[
  \tildea = a \left( \frac{\vare_0}{\diam(\Omega)} \right)^{1+s}.
\]
\end{remark}

We now make an explicit assumption about the boundary behavior of $u-\psi$, which is not only useful for the subsequent argument, but it also has been used in deriving regularity via a localization argument; see Proposition~\ref{prop:RegObstacle} (H\"older regularity).

\begin{assume}[boundary behavior]
\label{assume:BdryBehavior}
The obstacle is strictly negative on $\partial\Omega$, \ie there is $c_0>0$ for which
\[
  \psi(x) \leq -c_0<0, \quad \forall x\in \partial\Omega.
\]
\end{assume}

The assumption above, by continuity, assumes that the free boundary $\Gamma$ is uniformly away from the boundary of the domain $\partial\Omega$, and thus the problem is linear in a neighborhood of $\partial\Omega$; see \cite{Borthagaray2019} for details in the case $\eta \equiv 1$. In addition, this implies that the NDP in distance of Corollary~\ref{cor:NDPDist} holds for all $x \in \frakS^+(\Gamma,\vare_0)$.

Let now $\vare \in (0,\vare_0]$. We define a $\vare$--neighborhood of the free boundary $\Gamma$
\[
  \frakN(\Gamma,\vare) = \left\{ x \in \Omega^+ \ \middle| \ 0 < u(x) - \psi(x) < \vare^{1+s} \right\}.
\]

\begin{col}[comparing $\frakN(\Gamma,\vare)$ and $\frakS(\Gamma,\vare)$]
\label{cor:NeighbVSStripNDP}
If Assumptions~\ref{assume:nonsing} and \ref{assume:BdryBehavior} hold, then there is $\vare_1 \in (0,\vare_0]$, with $\vare_0>0$ defined in Corollary~\ref{cor:NDPDist}, such that
\[
  \frakN\left(\Gamma, a^{\tfrac1{1+s}}\vare \right) \subset \frakS(\Gamma,\vare), \qquad \forall \vare \in (0,\vare_1].
\]
\end{col}
\begin{proof}
Consider the compact set $\omega$ between $\frakS^+(\Gamma,\vare_0)$ and $\partial\Omega$, namely,
\[
  \omega = \overline{ \Omega \setminus \left( \frakS^+(\Gamma,\vare_0) \cup \Omega^0 \right) }.
\]
Set again $v = u-\psi$. In view of Assumption~\ref{assume:BdryBehavior} and the fact that $v \in C^{0,s}(\bar\Omega)$, we deduce
\[
  v \geq c_0, \text{ on } \partial\Omega, \qquad v \geq a\vare_0^{1+s} \text{ on } \partial\frakS^+(\Gamma,\vare_0) \cap \Omega^+,
\]
and that there is a constant $c_1>0$ such that
\[
  v(x) \geq c_1 , \qquad \forall x \in \omega.
\]

Let $\vare_1 \in (0,\vare_0]$ be given by $a\vare_1^{1+s} = c_1$. The 
definition of $\frakS(\Gamma,\vare)$ in conjunction with  
Corollary~\ref{cor:NDPDist} (NDP in distance) guarantee that for all $\vare \in (0,\vare_1]$
\[
  v(x) \geq a \vare^{1+s}, \qquad \forall x\in \frakS(\Gamma,\vare)^c \cap \Omega^+.
\]
Therefore, for all $x \in \frakN(\Gamma,a^{\tfrac1{1+s}}\vare)$ we have
\[
  0 < v(x) < \left( a^{\tfrac1{1+s}} \vare \right)^{1+s} = a \vare^{1+s},
\]
whence $x \in \frakS(\Gamma,\vare)$ as asserted. This concludes the proof.
\end{proof}

We comment that the inclusion asserted in the previous Corollary is the typical assumption that is made for the error estimates in distance for the classical obstacle problem. See, for instance, \cite[Section 2(d)]{MR3393323}.

\subsubsection{Singular points}

We now examine nondegeneracy for singular points. We base our discussion on \cite{MR3783214}, which requires that Theorem~\ref{thm:RegPoints} holds. We recall that a singular point $x_0 \in \Gamma$ corresponds to 
\[
  a(x_0) = \frac{|\bfb(x_0)|}{1+s}= 0
\]

The characterization of singular points given in \cite{MR3783214} hinges on the following structural assumption.

\begin{assume}[singular points]
\label{assume:SingPoints}
There is $c_0 >0$ and $\gamma >0$ such that $\psi \in C^{3,\gamma}(\Omega)$, $f+\Delta \psi \leq -c_0 < 0$ in $\{ x \in \Omega \ | \ \psi(x) > 0 \}$ and $\emptyset \neq \{ x \in \Omega \ | \ \psi(x) > 0 \} \Subset \Omega$.
\end{assume}

For the rest of the discussion regarding singular points we restrict our attention to the \emph{fractional Laplacian}, \ie $\eta \equiv 1$. The following result is proved in \cite[Lemma 3.2]{MR3783214}.

\begin{lemma}[general growth]
\label{lem:GenGrowth}
If Assumption~\ref{assume:SingPoints} holds, then there are constants $a,r_0>0$ such that, for all $x_0 \in \Gamma$
\[
  \sup_{x \in B_r(x_0)} \left( u(x) - \psi(x) \right) \geq a r^2, \qquad \forall r \in (0, r_0).
\]
\end{lemma}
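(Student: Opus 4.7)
The plan is to exploit the specific structure $\eta \equiv 1$ via the Caffarelli--Silvestre extension \cite{Caffarelli2007}, which converts the nonlocal obstacle problem into a degenerate-elliptic Signorini problem in $\R^{d+1}_+$, and then to mimic Caffarelli's classical quadratic nondegeneracy argument for the standard obstacle problem.

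First, I would introduce the $s$-harmonic extensions $U$ and $\Psi$ of $u$ and $\psi$ to $\R^{d+1}_+$, solutions of $\operatorname{div}(y^{1-2s}\nabla\cdot)=0$ with prescribed traces on $\R^d \times \{0\}$, and set $V = U - \Psi$. Then $V(\cdot,0) = v$ is nonnegative, vanishes on $\Omega^0$, and on $\Omega^+$ the conormal derivative $-\lim_{y \downarrow 0} y^{1-2s}\partial_y V$ equals $c_{d,s}\bigl(f - (-\Delta)^s \psi\bigr)$. The crucial step is to translate the local hypothesis $f + \Delta\psi \leq -c_0$ on $\{\psi > 0\}$ into an effective one-sided bound on this flux. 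This is achieved by using the $C^{3,\gamma}$ regularity of $\psi$ to expand $\Psi(x,y)$ in powers of $y$ near $y=0$, matching coefficients against $\operatorname{div}(y^{1-2s}\nabla \Psi)=0$, and thereby relating $(-\Delta)^s\psi$ to $-\Delta\psi$ modulo terms controlled by $r$.

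Next, fix $x_0 \in \Gamma$. Since $\Omega^0 \subset \{\psi>0\} \Subset \Omega$, I would work in a cylinder $Q_r = B_r(x_0) \times (0,\rho r)$ sitting above $\{\psi>0\}$ for $r < r_0$ small enough; the boundary case $\psi(x_0)=0$ is handled by perturbing $x_0$ slightly into $\{\psi>0\}$ and absorbing the loss into the final constants $a$ and $r_0$. Introducing the quadratic barrier
\[
  P(x,y) = a\bigl(|x-x_0|^2 - \kappa_s\, y^2\bigr),
\]
with $\kappa_s$ the unique value ensuring $\operatorname{div}(y^{1-2s}\nabla P)=0$, the difference $W = V - P$ is $s$-harmonic in the non-coincidence portion of $Q_r$, vanishes at $(x_0,0)$, and is non-positive on the contact part of its boundary. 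Applying the weighted maximum principle for $\operatorname{div}(y^{1-2s}\nabla\cdot)$ together with the effective flux bound obtained in the first step, I would conclude $\sup_{\partial B_r(x_0)} v - a r^2 \geq 0$, which yields the desired $\sup_{B_r(x_0)} v \geq a r^2$.

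The hard part is the matching in the first step between the classical hypothesis on $\Delta\psi$ and the degenerate extension operator. It requires a careful Taylor expansion of $\Psi$ in the $y$ variable, which is justified by the $C^{3,\gamma}$ regularity, and delicate bookkeeping of the weight $y^{1-2s}$ to ensure that the effective contribution of $\Delta\psi$ dominates the flux produced by the barrier $P$. Once this identification is in place, the remainder of the argument is a fairly direct adaptation of Caffarelli's classical quadratic nondegeneracy proof for the obstacle problem.
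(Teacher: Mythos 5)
The paper itself offers no proof of this lemma; it simply cites \cite[Lemma 3.2]{MR3783214}, so you are being compared against the cited argument rather than text in this manuscript. Your high-level toolkit (Caffarelli--Silvestre extension, a degenerate-harmonic quadratic barrier, and a maximum-principle comparison) is indeed the right one for this type of nondegeneracy statement, but as written the proposal has two genuine gaps.

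First, the ``crucial step'' is not sound as described. You take $V = U - \Psi$ with $\Psi$ the $s$-harmonic extension of $\psi$, which forces all information about $\psi$ to appear in the boundary flux through $(-\Delta)^s\psi$. You then claim to ``relate $(-\Delta)^s\psi$ to $-\Delta\psi$ modulo terms controlled by $r$'' by Taylor-expanding $\Psi$ in $y$. This cannot work: in the expansion $\Psi(x,y)=\psi(x)+c_1(x)y^{2s}+c_2(x)y^2+\cdots$, the coefficient $c_1$ is proportional to $(-\Delta)^s\psi(x)$ (a genuinely nonlocal quantity depending on $\psi$ over all of $\R^d$) while $c_2$ is proportional to $\Delta\psi(x)$; these two coefficients are independent and no amount of ``matching against $\operatorname{div}(y^{1-2s}\nabla\Psi)=0$'' will convert one into the other. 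The hypothesis $f + \Delta\psi \leq -c_0$ is an assertion about the local Laplacian of $\psi$, and the natural way to use it in the extension is to extend $\psi$ trivially (i.e., $y$-independently), so that $\operatorname{div}\bigl(y^{1-2s}\nabla(\tilde u - \psi)\bigr) = -y^{1-2s}\Delta\psi$ and $\Delta\psi$ enters as a bulk source with a controllable sign. Your choice pushes $\psi$'s contribution into a quantity the hypothesis simply does not see.

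Second, the final deduction is not addressed and is not trivial. Once the maximum principle places the maximum of $W = V - P$ on the lateral boundary $\partial B_r^+ \cap \{y>0\}$ (or on the top/sides of your cylinder), what one learns is a lower bound on $\sup_{\partial B_r^+ \cap \{y>0\}} V$, which involves $\tilde u(x,y)$ at $y>0$; the latter depends on $u$ globally through the Poisson kernel of the extension and is not directly comparable to $\sup_{B_r(x_0)}(u-\psi)$ on the thin slice $\{y=0\}$. Converting the half-space estimate into the desired thin estimate requires an additional argument (e.g.~a careful choice of the domain, a further comparison, or a compactness/rescaling step), which your sketch simply asserts as ``conclude $\sup_{\partial B_r(x_0)} v - ar^2 \geq 0$.'' Together with the unresolved translation step, this leaves the proposal short of a proof.
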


Notice that this is similar to \eqref{eq:wNDP} but with exponent $2$ instead of $1+s$.

Consider now the following class of homogeneous polynomials of degree two:
\[
  \polP_2^+ = \left\{ p_2(x) = \frac12 x^\intercal \bfA x \ \middle| \ \bfA \in \R^{d \times d}\setminus\{\bfO\}, \ \bfA^\intercal = \bfA, \sigma(\bfA) \subset [0,\infty) \right\}.
\]
The following result, which can be found in \cite[Proposition 7.2]{MR3783214}, is crucial to characterize singular points but does not play an important role in our discussion.

\begin{prop}[growth at singular points]
\label{prop:GSingP}
If Assumption~\ref{assume:SingPoints} holds, then there is a modulus of continuity $\omega : \R^+ \to \R^+$ such that for any $x_0 \in \Gamma$ singular, we have
\[
  u(x) - \psi(x) = p_2^{x_0}(x-x_0) + \omega( |x-x_0| )|x-x_0|^2,
\]
for some $p_2^{x_0} \in \polP_2^+$.
\end{prop}

\begin{remark}[growth at singular points]
In the setting of Proposition~\ref{prop:GSingP} it is important to notice that, if $\dim \ker \bfA = k$, with $k \in \{0, \ldots, d-1\}$, then $u-\psi$ exhibits strict quadratic growth in the directions orthogonal to $\ker \bfA$. Since there is at least one such direction, we conclude that Proposition~\ref{prop:GSingP} implies Lemma~\ref{lem:GenGrowth}. 

The first example of this scenario is when $\dim\ker\bfA = 0$. This corresponds to an isolated contact point $x_0$, with the function $u-\psi$ growing quadratically in all directions emanating from $x_0$.

As a second example we consider the situations given in Figure~\ref{fig:SingPoints}. Although the geometry is quite distinct, in both cases we have $\dim \ker \bfA = 1$. We expect quadratic growth in the directions of the red arrows. The case on the right is more degenerate than the case on the left, and worse for the approximation of the free boundary $\Gamma$. This will be discussed further below.
\end{remark}

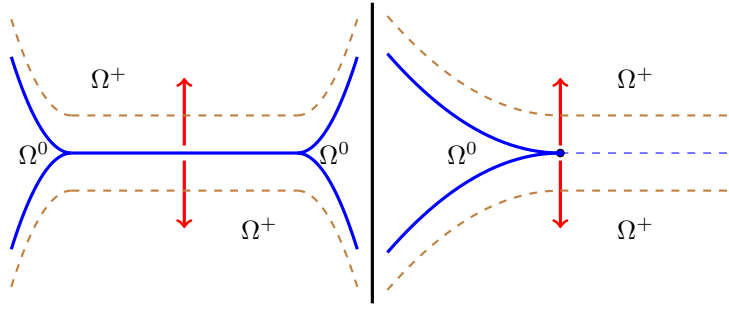
\begin{figure}[h!]
  \begin{center}
    \begin{tikzpicture}
      
      \draw[very thick, blue] (1,2) -- (4,2);
      \draw[very thick, blue, domain=0.2:1] plot (\x,{2*(\x -1)^2+2});
      \draw[very thick, blue, domain=0.2:1] plot (\x,{-2*(\x -1)^2+2});
      \draw[very thick, blue, domain=4:4.8] plot (\x,{2*(\x -4)^2+2});
      \draw[very thick, blue, domain=4:4.8] plot (\x,{-2*(\x -4)^2+2});
      
      \draw[very thick, red, ->] (2.5,2.1) -- (2.5,3);
      \draw[very thick, red, ->] (2.5,1.9) -- (2.5,1);
      
      \draw (0.5,2) node{$\Omega^0$};
      \draw (4.5,2) node{$\Omega^0$};
      
      \draw (1.5,3) node{$\Omega^+$};
      \draw (3.5,1) node{$\Omega^+$};
      
      \draw[thick, brown, dashed] (1,2.5) -- (4,2.5);
      \draw[thick, brown, dashed, domain=0.2:1] plot (\x,{2*(\x -1)^2+2.5});
      \draw[thick, brown, dashed, domain=4:4.8] plot (\x,{2*(\x -4)^2+2.5});
      \draw[thick, brown, dashed] (1,1.5) -- (4,1.5);
      \draw[thick, brown, dashed, domain=0.2:1] plot (\x,{-2*(\x -1)^2+1.5});
      \draw[thick, brown, dashed, domain=4:4.8] plot (\x,{-2*(\x -4)^2+1.5});

      \draw[very thick] (5,0) -- (5,4); 
      
      \draw[blue, thin, dashed] (7.5,2) -- (9.8,2);
      \draw[fill=blue,stroke=blue] (7.5,2) circle[radius = 0.05];
      \draw[very thick, blue, domain=5.2:7.5] plot (\x,{0.25*(\x-7.5)^2 + 2});
      \draw[very thick, blue, domain=5.2:7.5] plot (\x,{-0.25*(\x-7.5)^2 + 2});

      \draw[very thick, red, ->] (7.5,2.1) -- (7.5,3);
      \draw[very thick, red, ->] (7.5,1.9) -- (7.5,1);
      
      \draw (6.2,2) node{$\Omega^0$};
      
      \draw (8.5,3) node{$\Omega^+$};
      \draw (8.5,1) node{$\Omega^+$};
      
      \draw[brown, thick, dashed] (7.5,2.5) -- (9.8,2.5);
      \draw[brown, thick, dashed] (7.5,1.5) -- (9.8,1.5);
      \draw[thick, brown, dashed, domain=5.2:7.5] plot (\x,{0.25*(\x-7.5)^2 + 2.5});
      \draw[thick, brown, dashed, domain=5.2:7.5] plot (\x,{-0.25*(\x-7.5)^2 + 1.5});
      
    \end{tikzpicture}
  \end{center}
  \caption{Examples of singular free boundary points where the function $u-\psi$ has strict quadratic growth. We expect quadratic growth in the direction of the \textcolor{red}{red} arrows. The case on the right is more degenerate than the case on the left, and worse for the approximation of the free boundary $\Gamma$ (depicted in \textcolor{blue}{blue}). In both cases we have that $\dim \ker\bfA = 1$. A discrete free boundary $\Gamma_\Triang$, which is at a distance $\calO(\delta(h)^{1/2})$ is depicted in \textcolor{brown}{dashed brown}.}
  \label{fig:SingPoints}
\end{figure}

\subsection{Error estimates for free boundaries}
\label{sub:GammaEstimates}

Let us now put the pointwise error estimates of the previous sections to use and, on the basis of the discussions of Section~\ref{sub:FreeBoundaryObstacle}, obtain approximation properties for the free boundary $\Gamma$. To start we let $\psi_h = I_h \psi \in \Fespace$ be the Lagrange interpolant of $\psi$, namely $\psi_h(z) = \psi(z)$ for all $z \in \Vert$, and note that
\begin{equation}
\label{eq:IntErrorObstacle}
  \| \psi - \psi_h \|_\Linf = \sigma(h),
\end{equation}
for some function $\sigma : \R^+ \to \R^+$ with $\sigma(h) \downarrow 0$ as $h \downarrow 0$. Typically, and this shall be the case if $\psi \in \Psi$,
\[
  \sigma(h) \lesssim |\psi|_{W^{2,\infty}(\Omega)} h^2.
\]
Notice that \eqref{eq:TheObstacleh}, and its analysis, remain unchanged if we replace $\psi$ by $\psi_h$.

The next step is to define the discrete free boundary $\Gamma_h$. To do so, instead of looking at the zero level set of $u_h - \psi_h$ we consider the level set at height
\[
  \delta(h) = \tilde\frake(h) + \sigma(h),
\]
where $\tilde\frake(h) := \| u - u_h \|_\Linf$; see also \eqref{eq:ProjError}. Thus we define the discrete noncontact and contact sets, respectively, to be
\[
  \Omega_\Triang^+ = \left\{ x \in \Omega \ \middle| \ u_h(x) - \psi_h(x) > \delta(h) \right\}, \quad 
  \Omega_\Triang^0 = \left\{ x \in \Omega \ \middle| \ u_h(x) - \psi_h(x) \leq \delta(h) \right\}.
\]
The discrete free boundary is then,
\[
  \Gamma_\Triang = \partial \Omega^+_\Triang \cap \Omega = \partial\Omega_\Triang^0 \cap \Omega.
\]

We intend to prove that, in a sense, $\Gamma$ and $\Gamma_\Triang$ are close. This will be quantified by means of the \emph{Hausdorff distance}.

\begin{defn}[Hausdorff distance]
Let $A,B \subset \Real^d$. Their Hausdorff distance is
\[
  d_H(A,B) = \max \left\{ \max_{x \in A} \dist(x,B), \max_{y \in B} \dist(y,A) \right\}.
\]
\end{defn}

The error estimate for free boundaries reads as follows.

\begin{thm}[free boundary approximation]
\label{thm:ErrEstFreeBoundary}
Let $\Gamma$ satisfy Assumptions~\ref{assume:nonsing} and \ref{assume:BdryBehavior}, and let $a>0$ be given in Corollary~\ref{cor:NDPDist} and \eqref{eq:wNDP}. Then, there is $h_0>0$ such that
\[
  d_H(\Gamma, \Gamma_\Triang) \leq \left( \frac2a \delta(h) \right)^{\tfrac1{1+s}}, \qquad \forall h \in(0,h_0].
\]
\end{thm}
\begin{proof}
We proceed in three steps.

\begin{enumerate}[1.]
  \item We first show that $\Omega^0 \subset \Omega_\Triang^0$. To achieve this consider $x \in \Omega^0$, so that $u(x) = \psi(x)$. Thus, using \eqref{eq:ProjError} and \eqref{eq:IntErrorObstacle} we have
  \[
    u_h(x) - \psi_h(x) = \left( u_h(x) - u(x) \right) + \left( \psi(x) - \psi_h(x) \right) \leq \tilde\frake(h) + \sigma(h) = \delta(h).
  \]
  In other words, $x \in \Omega_\Triang^0$.
  
  \item Next we show that
  \[
    \Gamma_\Triang \subset \frakS\left(\Gamma, \left( \frac2a \delta(h) \right)^{\tfrac1{1+s}} \right).
  \]
  Let then $x \in \Gamma_\Triang$ and $x_0 \in \Gamma$ be the closest point to it, \ie
  \[
    |x-x_0| = d(x) = \dist(x,\Gamma).
  \]
  From the previous step we know that $x \in \Omega^+$. Since
  \[
    u_h(x) - \psi_h(x) = \delta(h), 
  \]
  and, in view of Theorem~\ref{thm:ErrObstacle} (error estimate), $u(x) - u_h(x) \leq \tilde\frake(h)$ we realize that
  \begin{align*}
    u(x) -\psi(x) &\leq \left( u(x) - u_h(x) \right) + \left(u_h(x) - \psi_h(x) \right) + \left( \psi_h(x) - \psi(x) \right) \\
      &\leq \tilde\frake(h) + \delta(h) + \sigma(h) = 2 \delta(h).
  \end{align*}
  Let now $h_0>0$ be sufficiently small so that $\delta(h_0) \leq \tfrac{a}2 \vare_1^{1+s}$, where $\vare_1>0$ is given in Corollary~\ref{cor:NeighbVSStripNDP} (comparing $\frakN(\Gamma,\vare)$ and $\frakS(\Gamma,\vare)$). This implies
  \[
    x \in \frakN(\Gamma, a^{\tfrac1{1+s}} \vare_1) \subset \frakS(\Gamma,\vare_1).
  \]
  Therefore, Corollary~\ref{cor:NDPDist} (NDP in distance) yields
  \[
    u(x) - \psi(x) \geq a \dist(x,\Gamma)^{1+s},
  \]
  whence,
  \[
    a \dist(x,\Gamma)^{1+s} \leq 2 \delta(h),
  \]
  \ie
  \[
    \dist(x,\Gamma) \leq \left( \frac2a \delta(h) \right)^{\tfrac1{1+s}},
  \]
  as desired.
  
  \item We show that
  \[
    \Gamma \subset \frakS\left(\Gamma_\Triang, \left( \frac2a \delta(h) \right)^{\tfrac1{1+s}} \right).
  \]
  Indeed, let $x_0 \in\Gamma$ and assume, for the sake of contradiction, that 
  \[
    R = \dist(x,\Gamma_\Triang) > \left( \frac2a \delta(h) \right)^{\tfrac1{1+s}}.
  \]
  Notice that the first step of the proof yields that if $B_R(x_0) \subset \Omega_\Triang^0$, then
  \[
    u_h(y) - \psi_h(y) \leq \delta(h), \qquad \forall y \in B_R(x_0).
  \]
  We now recall \eqref{eq:wNDP} which is a consequence of Corollary~\ref{cor:NDPDist}: for all $r \leq \diam(\Omega)$
  \[
    \sup_{y \in B_r(x_0)} \left( u(y) - \psi(y) \right) \geq a r^{1+s}.
  \]
  In other words, by compactness and continuity, there is $y \in B_R(x_0)$ such that 
  \[
    u(y) - \psi(y) \geq a R^{1+s}.
  \]
  
  On the other hand, 
  \begin{align*}
    u_h(y) - \psi_h(y) &= \left( u_h(y)-u(y) \right) + \left( u(y) - \psi(y) \right) + \left( \psi(y) - \psi_h(y) \right) \\
      &\geq -\tilde\frake(h) + aR^{1+s} - \sigma(h) > -\tilde\frake(h) + 2 \delta(h) - \sigma(h) > \delta(h),
  \end{align*}
  which is a contradiction. This proves the assertion and concludes the proof. \qedhere
\end{enumerate}
\end{proof}

\begin{remark}[stability]
We observe that the nondegeneracy constant $a>0$, defined in Corollary~\ref{cor:NDPDist}, acts as a stability parameter in the estimate of Theorem~\ref{thm:ErrEstFreeBoundary}.
\end{remark}

\begin{remark}[localized estimate]
\label{rem:LocEstFreeBdry}
Let $K \Subset \Omega$ be a compact set so that $\Gamma \cap K$ is made only of regular points. We thus allow $\Gamma$ to have singular points in $\Gamma \cap K^c$. Since the set of regular points is relatively open in $\Gamma$, we realize that Corollary~\ref{cor:NDPDist} is valid in $K$, \ie
\[
  u(x) -\psi(x) \geq a_K d(x)^{1+s}, \quad \forall x \in \frakS^+(\Gamma,\vare) \cap K,
\]
for some constant $a_K>0$ that, in particular, depends on the compact $K$. We can thus repeat the proof of Theorem~\ref{thm:ErrEstFreeBoundary} locally to deduce
\[
  d_H(\Gamma \cap K, \Gamma_\Triang \cap K ) \leq \left( \frac2{a_K} \delta(h) \right)^{\tfrac1{1+s}}, \qquad \forall h \in (0,h_0].
\]
\end{remark}

Theorem~\ref{thm:ErrEstFreeBoundary} and Remark~\ref{rem:LocEstFreeBdry} assume that, at least locally, there are no singular free boundary points. Let us conclude the discussion by presenting some results about the general case, namely when $\Gamma$ contains singular points. An inspection of the proof of Theorem~\ref{thm:ErrEstFreeBoundary} shows that the second step cannot be carried out, but the first and third one remain valid. In fact, the first step hinges on the definition of $\Gamma_\Triang$ and the last step relies on the growth condition \eqref{eq:wNDP} which, in principle, could be replaced by the quadratic growth condition provided in Lemma~\ref{lem:GenGrowth} (general growth). This brings about the following result.

\begin{thm}[error estimates for singular points]
If Assumption~\ref{assume:SingPoints} holds and $\eta \equiv 1$, then for every $x \in \Gamma$ we have that $x \in \Omega_\Triang^0$ and
\begin{equation}
\label{eq:diecinueve}
  \dist(x, \Gamma_\Triang) \leq \left( \frac2a \delta(h) \right)^{\tfrac1{2}}.
\end{equation}

\end{thm}

\begin{remark}[a posteriori error estimation]
Estimate \eqref{eq:diecinueve} establishes an error of $\Gamma$ relative to $\Gamma_\Triang$. This is the spirit of an \emph{a posteriori} error estimate. We refer to \cite{MR2139239} for similar estimates for the classical obstacle problem for the Laplace operator.
\end{remark}

\begin{remark}[regularity]
  Estimate \eqref{eq:diecinueve} requires no regularity of the free boundary $\Gamma$ beyond the nondegeneracy property of Lemma~\ref{lem:GenGrowth}, which relies on Assumption~\ref{assume:SingPoints}. The free boundary regularity stated in Proposition~\ref{prop:GSingP} (growth at singular points) is not needed to assert \eqref{eq:diecinueve}. It is then natural to wonder how the quadratic growth in certain directions established in Proposition~\ref{prop:GSingP} can be exploited for free boundary approximation.

Consider the approximations depicted in Figure~\ref{fig:SingPoints}.
In the left panel of Figure~\ref{fig:SingPoints} the discrete free boundary is uniformly close to $\Gamma$ and a global estimate of the form
\[
  \dist(x,\Gamma) \lesssim \delta(h)^{1/2}, \qquad \forall x \in \Gamma_\Triang
\]
is expected. On the other hand, the right panel of Figure~\ref{fig:SingPoints} shows that points of $\Gamma_\Triang$ may be far away from $\Gamma$ and, thus, the estimate above cannot be obtained.
\end{remark}

To conclude the discussion of approximation of free boundaries at singular points we mention that the scenarios depicted in Figure~\ref{fig:SingPoints} also illustrate the relevance of the localized error estimates alluded to in Remark~\ref{rem:LocEstFreeBdry}.

\section{A concave, fully nonlinear, nonlocal, problem}
\label{sec:HJB}

As a final application of our two-scale discretization, we shall consider a 
nonlocal Hamilton Jacobi Bellman equation. Let $s \in (0,1)$, and $\eta_i \in 
\calC(\lambda,\Lambda)$ ($i \in \{ 1, 2\}$) for some $\lambda, \Lambda$. Given $f \in 
C(\bar\Omega)$ we must find $u:\Real^d \to \Real$ such that
\begin{equation}
\label{eq:TheNLHJB}
  \min\left\{ \calL_{\eta_1}[u], \calL_{\eta_2}[u] \right\}= f, \quad \text{ in } \Omega,
  \qquad u = 0, \quad \text{ in } \Omega^c.
\end{equation}

Equations of this form have gathered a lot of attention in recent times. We refer the reader to, for instance, \cite{MR2494809,MR2831115,MR3542618} for details regarding the existence, uniqueness, and regularity of solutions. Regarding numerics, in the case $\Omega = \R^d$, see \cite{MR3940348,MR4250625,MR2525605,MR2405856,MR2391526,chowdhury2023precise,chowdhury2024discretization}. To our knowledge, however, no reference addresses the numerical approximation in the case of a bounded domain, as in \eqref{eq:TheNLHJB}. Our goal here will be to provide then the first convergent method for such problem. To achieve this we will get inspiration from \cite{MR0596541,MR0536953,Agnelli2018,Blanc2016}, and relate this problem to a sequence of obstacle problems.

\subsection{Existence and uniqueness}
\label{sub:ExistUniqueNLHJB}

Since it will be useful for our numerical purposes, we begin by discussing the existence of solutions. Below, by $\div$ we mean the remainder of integer division.

\begin{thm}[existence and uniqueness]
\label{thm:ExistenceHJB}
Assume that $f \in C(\bar\Omega)$ and that $\calL_{\eta_i}$ have a common supersolution, \ie there is $U \in C^2(\Real^d)$ for which
\[
  \calL_{\eta_i}[U] \geq f \quad \text{ in } \Omega, \qquad U \geq 0, \quad \text{ in } \Omega^c.
\]
Then, problem \eqref{eq:TheNLHJB} has a unique solution. Moreover, this solution can be obtained as the uniform limit of the following sequence: $u_0 \in \tHs$ solves
\[
  \calL_{\eta_1}[u_0] = f, \quad \text{ in } \Omega.
\]
For $k \in \polN$, let $i = k \div 2 +1$. The function $u_k \in \tHs$ solves
\begin{equation}
\label{eq:ContObstacleIteration}
  \min\left\{ \calL_{\eta_i}[u_k]- f, u_k - u_{k-1} \right\} = 0, \quad \text{ in }\Omega.
\end{equation}
\end{thm}
\begin{proof}
We split the proof in several steps.

\noindent \underline{Convergence:}
Notice that the sequence $\{u_k\}_{k \in \polN_0}$, as solutions of an obstacle problem with common right hand side, lie uniformly in $C^{0,s}(\bar\Omega)$. Moreover, by construction, the sequence is nondecreasing, \ie $u_k \geq u_{k-1}$ and bounded above. Indeed, since
\[
  \calL_{\eta_1}[U] \geq f = \calL_{\eta_1}[u_0]\quad \text{ in } \Omega, \qquad U - u_0 \geq 0, \quad \text{ in } \Omega^c,
\]
we must have, by comparison, that $U \geq u_0$. Assume next that, for some $k \in \polN$, we have $u_{k-1} \leq U$. Let $x \in \Omega$. We either have $u_k(x) = u_{k-1}(x) \leq U(x)$ or $u_k(x) > u_{k-1}(x)$. However, at such points we must have
\[
  \calL_{\eta_i}[u_k](x) = f(x) \leq \calL_{\eta_i}[U](x).
\]
Therefore, if we denote $\Omega_k^+ = \{ x \in \Omega \ | \ u_k(x) > u_{k-1}(x) \}$ we see that
\[
  \calL_{\eta_i}[U - u_k] \geq 0, \quad \text{ in } \Omega_k^+, \qquad U -u_k \geq 0, \quad \text{ in } (\Omega_k^+)^c.
\]
Consequently, by comparison, we must have that $u_k \leq U$ in $\Omega_k^+$. By Dini's theorem then, there is $u \in C(\bar\Omega)$ such that $u_k \rightrightarrows u$.

\noindent \underline{Solution:} The uniform convergence also implies that, for $i \in \{1,2\}$,
\[
  \calL_{\eta_i}[u] \geq f, \quad \text{ in } \Omega.
\]
On the other hand, if we show that for every $k \in \polN_0$
\begin{equation}
\label{eq:minLukleqf}
  \min\left\{ \calL_{\eta_1}[u_k], \calL_{\eta_2}[u_k] \right\} \leq f, \quad \text{ in } \Omega,
\end{equation}
we may pass to the limit and obtain that
\[
  \calL_{\eta_1}[u] \geq f, \quad \calL_{\eta_2}[u] \geq f, \quad \min\left\{ \calL_{\eta_1}[u], \calL_{\eta_2}[u] \right\} \leq f,
\]
and thus $u$ must be a solution.

\noindent \underline{Proof of \eqref{eq:minLukleqf}:} We argue by induction. By the way the iterative scheme is initialized, \eqref{eq:minLukleqf} holds for $k=0$. Assume now that the inequality holds for some $k \in \polN$. Consider the noncoincidence set
\[
  x_n \in \Omega_k^+ = \left\{ x \in \Omega \ \middle| \  u_{k+1}(x) > u_k(x) \right\}.
\]
By the complementarity conditions we must have, for some $i \in \{1,2\}$, that
\[
  \calL_{\eta_i}[u_{k+1}](x_n)=f(x_n).
\]
If, on the other hand, we consider the coincidence set
\[
  x_c \in \Omega_k^0 = \left\{ x \in \Omega \ \middle| \  u_{k+1}(x) = u_k(x) \right\} \cap \Omega,
\]
we see that $u_{k+1}(x_c) = u_k(x_c)$ and $u_{k+1}(y) \geq u_k(y)$ for all $y \in \R^d$. By the inductive hypothesis, there is $i_0 \in \{1,2\}$ for which
\[
  \calL_{\eta_{i_0}}[u_k](x_c) \leq f(x_c),
\]
then
\begin{align*}
  \calL_{\eta_{i_0}}[u_{k+1}](x_c) &= \vp \int_{\Real^d} \left( u_{k+1}(x_c) - u_{k+1}(y) \right) \frac1{|x-y|^{d+2s}} \ani\left(\frac{x_c-y}{|x_c-y|}\right) \diff{y} \\
  &= \vp \int_{\Real^d} \left( u_{k}(x_c) - u_{k+1}(y) \right) \frac1{|x-y|^{d+2s}} \ani\left(\frac{x_c-y}{|x_c-y|}\right) \diff{y} \\
  &\leq \vp \int_{\Real^d} \left( u_{k}(x_c) - u_{k}(y) \right) \frac1{|x-y|^{d+2s}} \ani\left(\frac{x_c-y}{|x_c-y|}\right) \diff{y} \\
  &= \calL_{\eta_{i_0}}[u_{k}](x_c) \leq f(x_c),
\end{align*}
as claimed.

\noindent \underline{Uniqueness:} Follows by comparison.
\end{proof}

\begin{remark}[obstacle]
Notice that the iterative scheme presented in Theorem~\ref{thm:ExistenceHJB} requires, at every step the solution of an obstacle problem like the one described in Section~\ref{sec:Obstacle}.
\end{remark}

\begin{remark}[supersolution]
Theorem~\ref{thm:ExistenceHJB} relies on the existence of a common supersolution for the operators. A possible common supersolution is
\[
  U(x) = \frac{A}2 |x|^2 + B.
\]
The constant $B$ can be chosen so that $U \geq 0$ in $\Omega^c$, whereas we can choose $A$, depending only on $d$, $s$, $\lambda$, $\Lambda$, and $-\| f \|_\Linf$, to obtain a supersolution.
\end{remark}

\begin{remark}[rate of convergence]
It is not known to us whether a rate of convergence for the iteration of Theorem~\ref{thm:ExistenceHJB} can be established.
\end{remark}

\subsection{A convergent scheme}
\label{sub:DiscreteHJB}

As a final application of our constructions we present a scheme for problem \eqref{eq:TheNLHJB}. Inspired by the proof of Theorem~\ref{thm:ExistenceHJB} we consider the following iterative scheme: $u_{h,0} \in \Fespace^0$ is such that
\[
  \calL_{\eta_1,\bvare}[u_{h,0}](z) = f(z), \quad \forall z \in \VertInt.
\]
For $k \in \polN$, let $i = k \div 2 + 1$. The function $u_{h,k} \in \Fespace^0$ solves
\begin{equation}
\label{eq:ObstacleIteration}
  \min\left\{ \calL_{\eta_i,\bvare}[u_{h,k}](z) -f(z), u_{h,k}(z) - u_{h,k-1}(z) \right\} = 0, \quad \forall z \in \VertInt.
\end{equation}

We immediately observe that, for every $k$, \eqref{eq:ObstacleIteration} has a unique solution. We would now like to obtain convergence of $u_{h,k}$ to $u$, the solution to \eqref{eq:TheNLHJB}. In order to achieve this, we introduce, for $k \in \polN$, the function $\tildeu_{h,k} \in \Fespace^0$ that is the solution of
\begin{equation}
\label{eq:ObstacleIterationTilde}
  \min\left\{ \calL_{\eta_i,\bvare}[\tildeu_{h,k}](z) -f(z), \tildeu_{h,k}(z) - I_h u_{k-1}(z) \right\} = 0, \quad \forall z \in \VertInt.
\end{equation}
Notice that $\tildeu_{h,k}$ is nothing but an approximation to the function $u_k$ from the scheme of Theorem~\ref{thm:ExistenceHJB}. As such we expect that $\tildeu_{h,k} \to u_k$ with a given rate. We quantify this by introducing the following assumption.

\begin{assume}[approximation]
\label{ASSume:BailForHJB}
There is a continuous function $\sigma : \Real^+ \to \Real^+$ such that $\sigma(h) \downarrow 0$ as $h \downarrow 0$ for which
\[
  \sup_{k \in\polN_0} \| u_k - \tildeu_{h,k} \|_\Linf \leq \sigma(h),
\]
where $\{u_k\}_{k \in \polN_0}$ are defined in Theorem~\ref{thm:ExistenceHJB} and $\{\tildeu_{h,k}\}_{k \in \polN_0}$ in \eqref{eq:ObstacleIterationTilde}.
\end{assume}

\begin{remark}[smoothness and compatibility]
While we would like to assert that the rate $\sigma(h)$ of Assumption~\ref{ASSume:BailForHJB} is that given by Theorem~\ref{thm:ErrObstacle} we are unable to prove this. The reason is that the error estimates of this Theorem hinge on Proposition~\ref{prop:RegObstacle} (H\"older regularity) which needs the obstacle to belong to the class $\Psi$. This means that, for every $k \in \polN$, we must be able to assert that:
\begin{enumerate}[1.]
  \item $u_k \in C^{2,1}(\Omega)$. While we are not able to verify this directly, we comment that this smoothness assumption is taken from \cite{Borthagaray2019}, which in turn follows the arguments of \cite{Caffarelli2008a,MR2270163}. This may not be sharp.
  
  \item $u_{k|\partial\Omega} <0$. However, we have $u_{k|\partial\Omega} = 0$.
\end{enumerate}
\end{remark}

Clearly
\[
  \tilde\frake(h) \leq \sigma(h),
\]
where $\tilde{\frake}(h) = \| u-u_h \|_{L^\infty(\Omega)}$; see also \eqref{eq:ProjError}.
Let us now show convergence.

\begin{thm}[convergence]
In the setting of Theorem~\ref{thm:ExistenceHJB} and under Assumption~\ref{ASSume:BailForHJB}, let $u$ solve \eqref{eq:TheNLHJB}, and $\{u_{h,k}\}_{k \in \polN_0, h>0}$ be the solutions to \eqref{eq:ObstacleIteration}. Assume that, as $k \uparrow \infty$ and $h \downarrow 0$, we have that
\[
  k \, \sigma(h) \to 0.
\]
Then,  $u_{h,k} \rightrightarrows u$.
\end{thm}
\begin{proof}
As mentioned above, the family $\{\tildeu_{h,k}\}_{k \in \polN_0} \subset \Fespace^0$, defined in \eqref{eq:ObstacleIterationTilde}, is an approximation of the obstacle problem defined in \eqref{eq:ContObstacleIteration}.

Consider now the difference $\tildeu_{h,k} - u_{h,k}$. We claim that
\begin{equation}
\label{eq:InterpolantHJB}
  \| \tildeu_{h,k} - u_{h,k} \|_\Linf \leq \| u_{h,k-1} - I_h u_{k-1} \|_\Linf.
\end{equation}
Indeed, if we define $w_h = \tildeu_{h,k} +\| u_{h,k-1} - I_h u_{k-1} \|_\Linf$ we see that
\[
  w_h \geq \tildeu_{h,k} = 0, \quad \text{ in } \Omega^c.
\]
Moreover, for $z \in \VertInt$,
\[
  w_h(z) \geq \tildeu_{h,k}(z) + u_{h,k-1}(z) - I_h u_{k-1}(z) \geq u_{h,k-1}(z)
\]
and $\calL_{\eta_i,\bvare}[w_h](z) \geq f(z)$. In other words, the function $w_h$ is a supersolution to the obstacle problem \eqref{eq:ObstacleIteration}. Lemma~\ref{lem:DiscCompareObstacle} then implies that $u_{h,k} \leq w_h$, \ie
\[
  u_{h,k} - \tildeu_{h,k} \leq \| u_{h,k-1} - I_h u_{k-1} \|_\Linf.
\]
A similar argument shows the lower bound.

Using Assumption~\ref{ASSume:BailForHJB} we now iterate \eqref{eq:InterpolantHJB} to obtain
\begin{align*}
  \| u_k - u_{h,k} \|_\Linf &\leq \| u_k - \tildeu_{h,k} \|_\Linf + \| \tildeu_{h,k} - u_{h,k} \|_\Linf \\
  &\leq \sigma(h) + \| u_{h,k-1} - I_h u_{k-1} \|_\Linf \\
  &\leq \sigma(h) + \| u_{k-1} - I_h u_{k-1} \|_\Linf + \| u_{h,k-1} - u_{k-1} \|_\Linf \\
  &\leq 2\sigma(h) + \| u_{h,k-1} - u_{k-1} \|_\Linf \\
  &\leq (k+1)\sigma(h) + \| u_{h,0} - u_0 \|_\Linf \leq (k+2)\sigma(h),
\end{align*}
where, in the last step, we used that $u_{h,0}$ is an approximation to the solution of the linear problem.

The triangle inequality, and the assumption on $k$ and $h$, yield the result.
\end{proof}

\appendix

\section{Approximation of integrodifferential operators of order $2s$ by second order differential operators}
\label{sec:Regularize}

To gain some intuition about what properties of the operator we would like to preserve after regularization, and to highlight the differences between the case of constant $\eta$ (\ie a fractional Laplacian) and a variable one, here we inspect the approximation of the integrodifferential operator
\begin{equation}\label{eq:Ieps}
  \calI_\vare[w](x) = \int_{B_\vare} \frac{ w(x+y) - 2w(x) + w(x-y) }{|y|^{d+2s}} \eta\left( \frac{y}{|y|} \right) \diff y
\end{equation}
by considering its action on a quadratic, \ie $ w \in \polP_2$. This may help in justifying our choice, showing the stark difference between our approach and that of \cite{Han2021,Han2023}, as well as providing some intuition into existing works that operate in a reverse way. For instance, \cite{MR2667633,Nochetto2018,MR3939307} approximate (local) second order elliptic differential operators by integral operators like $\calI_\vare$. We emphasize, however, that to obtain consistent, monotone, finite-difference schemes for arbitrary (local) second order elliptic operators, one requires the use of wide stencils \cite{Neilan2017}.

We begin by observing that, for $w \in \polP_2$,
\[
  w(x+y) - 2w(x) + w(x-y) = D^2 w(x) : y \otimes y,
\]
where $D^2w(x)$ is the Hessian of $w$ at $x$, $\otimes$ is the outer product, and $:$ is the Frobenius inner product. This means that
\begin{align*}
  \calI_\vare[w](x) &= \int_{B_\vare} D^2w(x): y \otimes y\frac1{|y|^{d+2s}} \eta\left( \frac{y}{|y|} \right) \diff y \\
    &= D^2w(x):\int_{B_\vare}  y \otimes y\frac1{|y|^{d+2s}} \eta\left( \frac{y}{|y|} \right) \diff y
    = \bfA:D^2w(x),
\end{align*}
where
\[
  \bfA = \int_{B_\vare}  y \otimes y\frac1{|y|^{d+2s}} \eta\left( \frac{y}{|y|} \right) \diff y \in \R^{d\times d}, \qquad \bfA^\intercal = \bfA.
\]

Consider now the particular case of $\eta \equiv 1$. The change of variables $y = \vare z$ with $z \in B_1$ shows that
\[
  \bfA = \vare^{2(1-s)} \int_{B_1} z \otimes z \frac1{|z|^{d+2s}} \diff z = \frac{\vare^{2(1-s)} \omega_d}{2d(1-s)} \bfI,
\]
where $\bfI$ is the identity matrix, and $\omega_d = |\polS^{d-1}|$. Therefore,
\[
  \calI_\vare[w](x) = \frac{\vare^{2(1-s)} \omega_d}{2d(1-s)} \LAP w(x), \qquad \forall w \in \polP_2.
\]
This approximation of $\mathcal{L}_1$ in the ball $B_\vare$, followed by centered finite differences,
  is the two-scale strategy advocated in \cite{Han2021,Han2023}. We next explore this approach for $\eta\ne1$
  and uncover some hidden difficulties.

Assume now that the coefficient $\eta$ is not constant, but even, \ie $\eta(z) = \eta(-z)$. Notice that this assumption is included in the class $\calC(\lambda,\Lambda)$ of Definition~\ref{def:ClassCoeff}. The change of variables $y = \vare z$ implies that
\[
  \bfA = \vare^{2(1-s)} \int_{B_1} z \otimes z \frac1{|z|^{d+2s}} \eta\left( \frac{z}{|z|} \right) \diff z.
\]
We see that $\bfA$ is symmetric positive definite. Let us use polar coordinates to obtain
\begin{align*}
  \bfA &= \vare^{2(1-s)}\int_{\polS^{d-1}} \theta \otimes \theta \eta(\theta) \int_0^1 \frac{r^2}{r^{d+2s}} r^{d-1} \diff r \diff \theta \\
  &= \frac{\vare^{2(1-s)}}{2(1-s)} \int_{\polS^{d-1}} \theta \otimes \theta \eta(\theta) \diff \theta = \frac{\vare^{2(1-s)}}{2(1-s)} \bfA_0,
\end{align*}
where, since $\eta(\theta)=\eta(-\theta)$, $\bfA_0$ is symmetric, but not necessarily diagonal.
In this case
\begin{equation}
\label{eq:DivFormNL}
  \calI_\vare[w](x) = \frac{\vare^{2(1-s)}}{2(1-s)} \bfA_0 : D^2 w(x) = \frac{\vare^{2(1-s)}}{2(1-s)} \DIV( \bfA_0\GRAD w(x) ), \qquad \forall w \in \polP_2.
\end{equation}

We quickly comment on the case $\eta(\theta) \neq \eta(-\theta)$, although this does not fit in the class $\calC(\lambda,\Lambda)$ of Definition~\ref{def:ClassCoeff}. Decomposing $\eta$ as the sum of its symmetric and skew-symmetric components, one readily obtains that the latter gives rise to a null integral. Therefore, such a case reduces to \eqref{eq:DivFormNL}, with $\bfA_0$ corresponding to the symmetric part of $\eta$.

Finally, we consider the general case of a coefficient of the form $\eta(x,\tfrac{z}{|z|})$. Then
\[
  \bfA_0 = \bfA_0(x) = \int_{\polS^{d-1}} \theta \otimes \theta \eta(x,\theta) \diff \theta,
\]
and
\begin{equation}
\label{eq:NonDivFormNL}
  \calI_\vare[w](x) = \frac{\vare^{2(1-s)}}{2(1-s)} \bfA_0(x) : D^2 w(x), \qquad \forall w \in \polP_2.
\end{equation}

\begin{remark}[anisotropy]
As we have already mentioned, it is not easy to construct approximations of second order anisotropic operators, like those in \eqref{eq:DivFormNL} and \eqref{eq:NonDivFormNL}, that are monotone. In particular, \eqref{eq:NonDivFormNL} requires wide stencils.
\end{remark}

Let us mention now what are the implications of the previous considerations for the approximation of the operator $\calL_\eta$, introduced in \eqref{eq:TheOperator}. First, owing to the symmetry of the coefficient $\eta$, which is part of Definition~\ref{def:ClassCoeff}, we have \eqref{eq:etaISsymmetric}, with $\calK(r) = r^{-d-2s}$. Next, if we assume that $w \in \polP_2$, we have
\begin{align*}
  \calL_{\eta,\vare}[w](x) = & - \frac12 \calI_\vare[w](x)
  \\ & + \frac12 \int_{\R^d\setminus B_\vare} \left( 2w(x) - w(x+y) - w(x-y) \right) \eta\left( \frac{y}{|y|} \right) \calK(|y|) \diff y,
\end{align*}
where $\calI_\vare[w]$ was defined in \eqref{eq:Ieps}.
Using \eqref{eq:DivFormNL} or \eqref{eq:NonDivFormNL} we then conclude that, if $w \in \polP_2$,
\begin{align*}
  \calL_{\eta,\vare [w](x)} = & - \frac{\vare^{2(1-s)}}{4(1-s)} \bfA_0 : D^2 w(x)
  \\
  & + \frac12 \int_{\R^d\setminus B_\vare} \left( 2w(x) - w(x+y) - w(x-y) \right) \eta\left( \frac{y}{|y|} \right) \calK(|y|) \diff y. 
\end{align*}

In summary, for quadratic functions our regularization \eqref{eq:DefOfKvare} is equivalent to replacing the singular part of the kernel $\frac{1}{|z|^{d+2s}} \eta(\frac{z}{|z|})$  in the ball $B_\vare$ by a scaled second order operator with symmetric {\it anisotropic} positive definite diffusion matrix $\bfA_0$. The special case $\eta = 1$ is the only one for which $\bfA_0=\bfI$ and this operator coincides with the scaled Laplace operator; moreover, centered finite differences
  are monotone and exact on quadratics. Otherwise, for $\eta\ne1$ a finite difference discretization requires wide stencils to be monotone \cite[Section 3.2]{Neilan2017}.  

\bibliographystyle{plain}
\bibliography{biblio}

\begin{thebibliography}{10}

\bibitem{Acosta2017a}
Gabriel Acosta and Juan~Pablo Borthagaray.
\newblock A fractional {L}aplace equation: regularity of solutions and finite
  element approximations.
\newblock {\em SIAM J. Numer. Anal.}, 55(2):472--495, 2017.

\bibitem{Agnelli2018}
J.~P. Agnelli, U.~Kaufmann, and J.~D. Rossi.
\newblock Numerical approximation of equations involving minimal/maximal
  operators by successive solution of obstacle problems.
\newblock {\em J. Comput. Appl. Math.}, 342:133--146, 2018.

\bibitem{Ainsworth2017a}
Mark Ainsworth and Zhiping Mao.
\newblock Analysis and approximation of a fractional {C}ahn-{H}illiard
  equation.
\newblock {\em SIAM J. Numer. Anal.}, 55(4):1689--1718, 2017.

\bibitem{Akagi2016}
Goro Akagi, Giulio Schimperna, and Antonio Segatti.
\newblock Fractional {C}ahn-{H}illiard, {A}llen-{C}ahn and porous medium
  equations.
\newblock {\em J. Differential Equations}, 261(6):2935--2985, 2016.

\bibitem{MR0488847}
C.~Baiocchi.
\newblock Estimations d'erreur dans {$L\sp{\infty }$} pour les in\'{e}quations
  \`a obstacle.
\newblock In {\em Mathematical aspects of finite element methods ({P}roc.
  {C}onf., {C}onsiglio {N}az. delle {R}icerche ({C}.{N}.{R}.), {R}ome, 1975),},
  Lecture Notes in Math., Vol. 606,, pages 27--34. ,, 1977.

\bibitem{MR3783214}
Bego\~{n}a Barrios, Alessio Figalli, and Xavier Ros-Oton.
\newblock Global regularity for the free boundary in the obstacle problem for
  the fractional {L}aplacian.
\newblock {\em Amer. J. Math.}, 140(2):415--447, 2018.

\bibitem{Bertoin1996}
Jean Bertoin.
\newblock {\em L{\'e}vy processes}.
\newblock Cambridge tracts in mathematics n$\,^{\circ}$121. Cambridge
  University Press, 1996.

\bibitem{MR3940348}
Imran~H. Biswas, Indranil Chowdhury, and Espen~R. Jakobsen.
\newblock On the rate of convergence for monotone numerical schemes for
  nonlocal {I}saacs equations.
\newblock {\em SIAM J. Numer. Anal.}, 57(2):799--827, 2019.

\bibitem{MR2391526}
Imran~H. Biswas, Espen~R. Jakobsen, and Kenneth~H. Karlsen.
\newblock Error estimates for finite difference-quadrature schemes for a class
  of nonlocal {B}ellman equations with variable diffusion.
\newblock In {\em Stochastic analysis and partial differential equations},
  volume 429 of {\em Contemp. Math.}, pages 19--31. Amer. Math. Soc.,
  Providence, RI, 2007.

\bibitem{MR2405856}
Imran~H. Biswas, Espen~R. Jakobsen, and Kenneth~H. Karlsen.
\newblock Error estimates for a class of finite difference-quadrature schemes
  for fully nonlinear degenerate parabolic integro-{PDE}s.
\newblock {\em J. Hyperbolic Differ. Equ.}, 5(1):187--219, 2008.

\bibitem{Blanc2016}
Pablo Blanc, Juan~P. Pinasco, and Julio~D. Rossi.
\newblock Obstacle problems and maximal operators.
\newblock {\em Adv. Nonlinear Stud.}, 16(2):355--362, 2016.

\bibitem{Bonito2018}
Andrea Bonito, Juan~Pablo Borthagaray, Ricardo~H. Nochetto, Enrique
  Ot\'{a}rola, and Abner~J. Salgado.
\newblock Numerical methods for fractional diffusion.
\newblock {\em Comput. Vis. Sci.}, 19(5-6):19--46, 2018.

\bibitem{Bonito2019}
Andrea Bonito, Wenyu Lei, and Joseph~E. Pasciak.
\newblock Numerical approximation of the integral fractional {L}aplacian.
\newblock {\em Numer. Math.}, 142(2):235--278, 2019.

\bibitem{MR3356020}
Andrea Bonito and Joseph~E. Pasciak.
\newblock Numerical approximation of fractional powers of elliptic operators.
\newblock {\em Math. Comp.}, 84(295):2083--2110, 2015.

\bibitem{Borthagaray2021}
J.P. Borthagaray, D.~Leykekhman, and R.H. Nochetto.
\newblock {Local energy estimates for the fractional Laplacian}.
\newblock {\em SIAM J. Numer. Anal.}, 59(4):1918--1947, 2021.

\bibitem{Borthagaray2023}
Juan~Pablo Borthagaray and Ricardo~H Nochetto.
\newblock Besov regularity for the {D}irichlet integral fractional {L}aplacian
  in {L}ipschitz domains.
\newblock {\em Journal of Functional Analysis}, 284(6):109829, 2023.

\bibitem{Borthagaray2019}
Juan~Pablo Borthagaray, Ricardo~H. Nochetto, and Abner~J. Salgado.
\newblock Weighted {S}obolev regularity and rate of approximation of the
  obstacle problem for the integral fractional {L}aplacian.
\newblock {\em Math. Models Methods Appl. Sci.}, 29(14):2679--2717, 2019.

\bibitem{Caffarelli2015}
Luis Caffarelli and Fernando Charro.
\newblock On a fractional {M}onge-{A}mp\`ere operator.
\newblock {\em Ann. PDE}, 1(1):Art. 4, 47, 2015.

\bibitem{Caffarelli2017}
Luis Caffarelli, Xavier Ros-Oton, and Joaquim Serra.
\newblock Obstacle problems for integro-differential operators: regularity of
  solutions and free boundaries.
\newblock {\em Invent. Math.}, 208(3):1155--1211, 2017.

\bibitem{Caffarelli2007}
Luis Caffarelli and Luis Silvestre.
\newblock An extension problem related to the fractional {L}aplacian.
\newblock {\em Comm. Partial Differential Equations}, 32(7-9):1245--1260, 2007.

\bibitem{MR2494809}
Luis Caffarelli and Luis Silvestre.
\newblock Regularity theory for fully nonlinear integro-differential equations.
\newblock {\em Comm. Pure Appl. Math.}, 62(5):597--638, 2009.

\bibitem{MR2667633}
Luis Caffarelli and Luis Silvestre.
\newblock Smooth approximations of solutions to nonconvex fully nonlinear
  elliptic equations.
\newblock In {\em Nonlinear partial differential equations and related topics},
  volume 229 of {\em Amer. Math. Soc. Transl. Ser. 2}, pages 67--85. Amer.
  Math. Soc., Providence, RI, 2010.

\bibitem{MR2831115}
Luis Caffarelli and Luis Silvestre.
\newblock The {E}vans-{K}rylov theorem for nonlocal fully nonlinear equations.
\newblock {\em Ann. of Math. (2)}, 174(2):1163--1187, 2011.

\bibitem{Caffarelli2008a}
Luis~A. Caffarelli, Sandro Salsa, and Luis Silvestre.
\newblock Regularity estimates for the solution and the free boundary of the
  obstacle problem for the fractional {L}aplacian.
\newblock {\em Invent. Math.}, 171(2):425--461, 2008.

\bibitem{MR2525605}
Fabio Camilli and Espen~R. Jakobsen.
\newblock A finite element like scheme for integro-partial differential
  {H}amilton-{J}acobi-{B}ellman equations.
\newblock {\em SIAM J. Numer. Anal.}, 47(4):2407--2431, 2009.

\bibitem{Carmichael2015}
B.~Carmichael, H~Babahosseini, S.~N. Mahmoodi, and M.~Agah.
\newblock The fractional viscoelastic response of human breast tissue cells.
\newblock {\em Physical Biology}, 12(4):046001, May 2015.

\bibitem{chowdhury2023precise}
Indranil Chowdhury and Espen~R. Jakobsen.
\newblock Precise error bounds for numerical approximations of fractional {HJB}
  equations.
\newblock arXiv:2308.16434, 2023.

\bibitem{chowdhury2024discretization}
Indranil Chowdhury, Espen~Robstad Jakobsen, and Robin~{\O}stern Lien.
\newblock Discretization of fractional fully nonlinear equations by powers of
  discrete {L}aplacians.
\newblock arXiv:2401.09926, 2024.

\bibitem{MR1115237}
P.~G. Ciarlet.
\newblock Basic error estimates for elliptic problems.
\newblock In {\em Handbook of numerical analysis, {V}ol. {II}}, volume~II of
  {\em Handb. Numer. Anal.}, pages 17--351. North-Holland, Amsterdam, 1991.

\bibitem{Cont2006}
Rama Cont and Peter Tankov.
\newblock {\em Financial modelling with jump processes}.
\newblock Chapman \& Hall/CRC Financial Mathematics Series. Chapman \&
  Hall/CRC, Boca Raton, FL, 2004.

\bibitem{Pablo2012}
Arturo de~Pablo, Fernando Quir\'{o}s, Ana Rodr\'{\i}guez, and Juan~Luis
  V\'{a}zquez.
\newblock A general fractional porous medium equation.
\newblock {\em Comm. Pure Appl. Math.}, 65(9):1242--1284, 2012.

\bibitem{MR4189291}
Marta D'Elia, Qiang Du, Christian Glusa, Max Gunzburger, Xiaochuan Tian, and
  Zhi Zhou.
\newblock Numerical methods for nonlocal and fractional models.
\newblock {\em Acta Numer.}, 29:1--124, 2020.

\bibitem{MR2300288}
Vincent~J. Ervin, Norbert Heuer, and John~Paul Roop.
\newblock Numerical approximation of a time dependent, nonlinear,
  space-fractional diffusion equation.
\newblock {\em SIAM J. Numer. Anal.}, 45(2):572--591, 2007.

\bibitem{MR2212226}
Vincent~J. Ervin and John~Paul Roop.
\newblock Variational formulation for the stationary fractional advection
  dispersion equation.
\newblock {\em Numer. Methods Partial Differential Equations}, 22(3):558--576,
  2006.

\bibitem{MR0536953}
Lawrence~C. Evans and Avner Friedman.
\newblock Optimal stochastic switching and the {D}irichlet problem for the
  {B}ellman equation.
\newblock {\em Trans. Amer. Math. Soc.}, 253:365--389, 1979.

\bibitem{RosOtonBook}
Xavier Fern{\'a}dez-Real and Xavier Ros-Oton.
\newblock {\em Integro-Differential elliptic equations}.
\newblock Progress in Mathematics. Birkhauser, 2024.

\bibitem{MR0137148}
R.~K. Getoor.
\newblock First passage times for symmetric stable processes in space.
\newblock {\em Trans. Amer. Math. Soc.}, 101:75--90, 1961.

\bibitem{MR3276603}
Gerd Grubb.
\newblock Fractional {L}aplacians on domains, a development of
  {H}\"{o}rmander's theory of {$\mu$}-transmission pseudodifferential
  operators.
\newblock {\em Adv. Math.}, 268:478--528, 2015.

\bibitem{Han2021}
Rubing Han and Shuonan Wu.
\newblock A monotone discretization for integral fractional {L}aplacian on
  bounded {L}ipschitz domains: pointwise error estimates under {H}\"{o}lder
  regularity.
\newblock {\em SIAM J. Numer. Anal.}, 60(6):3052--3077, 2022.

\bibitem{Han2023}
Rubing Han, Shuonan Wu, and Hao Zhou.
\newblock A monotone discretization for the fractional obstacle problem and its
  improved policy iteration.
\newblock {\em Fractal and Fractional}, 7(8):634, 2023.

\bibitem{Huang2014}
Yanghong Huang and Adam Oberman.
\newblock Numerical methods for the fractional {L}aplacian: a finite
  difference--quadrature approach.
\newblock {\em SIAM J. Numer. Anal.}, 52(6):3056--3084, 2014.

\bibitem{MR2252038}
M.~Ilic, F.~Liu, I.~Turner, and V.~Anh.
\newblock Numerical approximation of a fractional-in-space diffusion equation.
  {I}.
\newblock {\em Fract. Calc. Appl. Anal.}, 8(3):323--341, 2005.

\bibitem{MR2300467}
M.~Ilic, F.~Liu, I.~Turner, and V.~Anh.
\newblock Numerical approximation of a fractional-in-space diffusion equation.
  {II}. {W}ith nonhomogeneous boundary conditions.
\newblock {\em Fract. Calc. Appl. Anal.}, 9(4):333--349, 2006.

\bibitem{MR3542618}
Tianling Jin and Jingang Xiong.
\newblock Schauder estimates for nonlocal fully nonlinear equations.
\newblock {\em Ann. Inst. H. Poincar\'{e} C Anal. Non Lin\'{e}aire},
  33(5):1375--1407, 2016.

\bibitem{MR3503212}
Janne Korvenp\"{a}\"{a}, Tuomo Kuusi, and Giampiero Palatucci.
\newblock The obstacle problem for nonlinear integro-differential operators.
\newblock {\em Calc. Var. Partial Differential Equations}, 55(3):Art. 63, 29,
  2016.

\bibitem{Li2018}
Wenbo Li and Ricardo~H. Nochetto.
\newblock Optimal pointwise error estimates for two-scale methods for the
  {M}onge-{A}mp\`ere equation.
\newblock {\em SIAM J. Numer. Anal.}, 56(3):1915--1941, 2018.

\bibitem{MR0596541}
P.-L. Lions and B.~Mercier.
\newblock Approximation num\'{e}rique des \'{e}quations de
  {H}amilton-{J}acobi-{B}ellman.
\newblock {\em RAIRO Anal. Num\'{e}r.}, 14(4):369--393, 1980.

\bibitem{MR4043885}
Anna Lischke, Guofei Pang, Mamikon Gulian, and et~al.
\newblock What is the fractional {L}aplacian? {A} comparative review with new
  results.
\newblock {\em J. Comput. Phys.}, 404:109009, 62, 2020.

\bibitem{Neilan2017}
Michael Neilan, Abner~J. Salgado, and Wujun Zhang.
\newblock Numerical analysis of strongly nonlinear {PDE}s.
\newblock {\em Acta Numer.}, 26:137--303, 2017.

\bibitem{NEILAN2020}
Michael Neilan, Abner~J. Salgado, and Wujun Zhang.
\newblock Chapter 2 --- {T}he {M}onge-{A}mp{\`e}re equation.
\newblock In Andrea Bonito and Ricardo~H. Nochetto, editors, {\em Geometric
  Partial Differential Equations - Part I}, volume~21 of {\em Handbook of
  Numerical Analysis}, pages 105--219. Elsevier, 2020.

\bibitem{MR0488848}
Joachim Nitsche.
\newblock {$L\sb{\infty }$}-convergence of finite element approximations.
\newblock In {\em Mathematical aspects of finite element methods ({P}roc.
  {C}onf., {C}onsiglio {N}az. delle {R}icerche ({C}.{N}.{R}.), {R}ome, 1975),},
  Lecture Notes in Math., Vol. 606,, pages 261--274. ,, 1977.

\bibitem{Nochetto2019}
R.~H. Nochetto, D.~Ntogkas, and W.~Zhang.
\newblock Two-scale method for the {M}onge-{A}mp\`ere equation: convergence to
  the viscosity solution.
\newblock {\em Math. Comp.}, 88(316):637--664, 2019.

\bibitem{Nochetto2019a}
R.~H. Nochetto, D.~Ntogkas, and W.~Zhang.
\newblock Two-scale method for the {M}onge-{A}mp\`ere equation: pointwise error
  estimates.
\newblock {\em IMA J. Numer. Anal.}, 39(3):1085--1109, 2019.

\bibitem{Nochetto2019b}
Ricardo~H. Nochetto and Dimitrios Ntogkas.
\newblock Convergent two-scale filtered scheme for the {M}onge-{A}mp\`ere
  equation.
\newblock {\em SIAM J. Sci. Comput.}, 41(2):B295--B319, 2019.

\bibitem{MR3393323}
Ricardo~H. Nochetto, Enrique Ot\'{a}rola, and Abner~J. Salgado.
\newblock Convergence rates for the classical, thin and fractional elliptic
  obstacle problems.
\newblock {\em Philos. Trans. Roy. Soc. A}, 373(2050):20140449, 14, 2015.

\bibitem{Nochetto2015}
Ricardo~H. Nochetto, Enrique Ot\'{a}rola, and Abner~J. Salgado.
\newblock A {PDE} approach to fractional diffusion in general domains: a priori
  error analysis.
\newblock {\em Found. Comput. Math.}, 15(3):733--791, 2015.

\bibitem{MR2139239}
Ricardo~H. Nochetto, Kunibert~G. Siebert, and Andreas Veeser.
\newblock Fully localized a posteriori error estimators and barrier sets for
  contact problems.
\newblock {\em SIAM J. Numer. Anal.}, 42(5):2118--2135, 2005.

\bibitem{Nochetto2018}
Ricardo~H. Nochetto and Wujun Zhang.
\newblock Discrete {ABP} estimate and convergence rates for linear elliptic
  equations in non-divergence form.
\newblock {\em Found. Comput. Math.}, 18(3):537--593, 2018.

\bibitem{MR4250625}
Christoph Reisinger and Yufei Zhang.
\newblock A penalty scheme and policy iteration for nonlocal {HJB} variational
  inequalities with monotone nonlinearities.
\newblock {\em Comput. Math. Appl.}, 93:199--213, 2021.

\bibitem{RosOton2016a}
Xavier Ros-Oton.
\newblock Nonlocal elliptic equations in bounded domains: a survey.
\newblock {\em Publ. Mat.}, 60(1):3--26, 2016.

\bibitem{RosOton2014}
Xavier Ros-Oton and Joaquim Serra.
\newblock The {D}irichlet problem for the fractional {L}aplacian: regularity up
  to the boundary.
\newblock {\em J. Math. Pures Appl. (9)}, 101(3):275--302, 2014.

\bibitem{RosOton2016}
Xavier Ros-Oton and Joaquim Serra.
\newblock Boundary regularity for fully nonlinear integro-differential
  equations.
\newblock {\em Duke Math. J.}, 165(11):2079--2154, 2016.

\bibitem{RosOton2016c}
Xavier Ros-Oton and Joaquim Serra.
\newblock Regularity theory for general stable operators.
\newblock {\em J. Differential Equations}, 260(12):8675--8715, 2016.

\bibitem{MR3939307}
Abner~J. Salgado and Wujun Zhang.
\newblock Finite element approximation of the {I}saacs equation.
\newblock {\em ESAIM Math. Model. Numer. Anal.}, 53(2):351--374, 2019.

\bibitem{MR1727557}
S.~A. Silling.
\newblock Reformulation of elasticity theory for discontinuities and long-range
  forces.
\newblock {\em J. Mech. Phys. Solids}, 48(1):175--209, 2000.

\bibitem{MR2348150}
S.~A. Silling, M.~Epton, O.~Weckner, J.~Xu, and E.~Askari.
\newblock Peridynamic states and constitutive modeling.
\newblock {\em J. Elasticity}, 88(2):151--184, 2007.

\bibitem{MR2270163}
Luis Silvestre.
\newblock Regularity of the obstacle problem for a fractional power of the
  {L}aplace operator.
\newblock {\em Comm. Pure Appl. Math.}, 60(1):67--112, 2007.

\bibitem{Treeby2010}
Bradley~E. Treeby and B.~T. Cox.
\newblock Modeling power law absorption and dispersion for acoustic propagation
  using the fractional {L}aplacian.
\newblock {\em The Journal of the Acoustical Society of America},
  127(5):2741--2748, 2010.

\bibitem{MR0185273}
M.~I. Vi\v{s}ik and G.~I. \`Eskin.
\newblock Convolution equations in a bounded region.
\newblock {\em Uspehi Mat. Nauk}, (no. 3 (123)):89--152, 1965.

\bibitem{Yamamoto2012}
Masakazu Yamamoto.
\newblock Asymptotic expansion of solutions to the dissipative equation with
  fractional {L}aplacian.
\newblock {\em SIAM J. Math. Anal.}, 44(6):3786--3805, 2012.

\end{thebibliography}

\end{document}